\crefname{hypothesis}{Hypothesis}{Hypotheses}
\newtheorem{assumption}{Assumption}
\title{The joint bidiagonalization of a matrix pair with inaccurate inner iterations 
	\thanks{Submitted to the editors DATE.
\funding{This work was supported in part by the National Natural Science Foundation of China under Grant No. 3192270206.}
}}
\author{Haibo Li\thanks{Computing System Optimization Lab, Huawei Technologies, 100094 Beijing, China, and Institute of Computing Technology, Chinese Academy of Sciences, 100190 Beijing, China. 
  (\email{haibolee1729@gmail.com}).}
}
\begin{document}
%\linenumbers
%\pagewiselinenumbers

\maketitle

% REQUIRED
\begin{abstract}
The joint bidiagonalization (JBD) process iteratively reduces a matrix pair $\{A,L\}$ to two bidiagonal forms simultaneously, which can be used for computing a partial generalized singular value decomposition (GSVD) of $\{A,L\}$. The process has a nested inner-outer iteration structure, where the inner iteration usually can not be computed exactly. In this paper, we study the inaccurately computed inner iterations of JBD by first investigating influence of computational error of the inner iteration on the outer iteration, and then proposing a reorthogonalized JBD (rJBD) process to keep orthogonality of a part of Lanczos vectors. An error analysis of the rJBD is carried out to build up connections with Lanczos bidiagonalizations. The results are then used to investigate convergence and accuracy of the rJBD based GSVD computation. It is shown that the accuracy of computed GSVD components depend on the computing accuracy of inner iterations and condition number of $(A^T,L^T)^T$ while the convergence rate is not affected very much. For practical JBD based GSVD computations, our results can provide a guideline for choosing a proper computing accuracy of inner iterations in order to obtain approximate GSVD components with a desired accuracy. Numerical experiments are made to confirm our theoretical results.
\end{abstract}

% REQUIRED
\begin{keywords}
joint bidiagonalization, GSVD, inner iteration, stopping tolerance, Lanczos bidiagonalization, convergence and accuracy
\end{keywords}

% REQUIRED
\begin{MSCcodes}
65F25, 65F20, 15A18, 65G99
\end{MSCcodes}

%%-----------------------------------------------------------------------------
\section{Introduction}
% Describe the importance of reduction of matrix pairs from various applications, e.g. GSVD, ...
For many matrix computation algorithms, a basic routine is reducing a matrix to a structured one using a series of orthogonal transformations. For example, the first step of QR algorithm for eigenvalue decomposition is reducing a matrix to a Hessenberg form \cite{Watkins2008}, while for SVD computation is reducing a matrix to a bidiagonal form \cite{Golub1965}. For large-scale matrices, the reduction via direct orthogonal transformations is very expensive, thus a Lanczos-type iterative process is a common choice, e.g. the symmetric Lanczos process for symmetric eigenvalue problem, the Lanczos bidiagonalization for singular value decomposition(SVD) and so on \cite{Lanczos1950,Paige1972, Larsen1998,Bjorck1988,Elden2004}. When it comes to large-scale matrix pairs, one method is implicitly transform it to a standard single matrix problem, such as the shift-and-invert Lanczos method for generalized symmetric eigenvalue decomposition \cite{Ericsson1986,Hetmaniuk2006}. Another popular choice is the Jacobi-Davidson method for generalized eigenvalue/singular value decomposition, where an inner correction equation need to be solved \cite{Sleijpen2000,Arbenz2004,Hochstenbach2009}. These algorithms often have the structure of nested inner-outer iterations, and the inner iteration contains a large-scale matrix computation problem that should be computed iteratively. 

% Intro to JBD, from the view of generalize bidiagonalization of one matrix
In this paper we focus on another inner-outer iterative algorithm that can reduce a matrix pair to two bidiagonal forms simultaneously, which is called the joint bidiagonalization process. This algorithm was first proposed by Zha \cite{Zha1996} for computing a partial GSVD of a large-scale matrix pair $\{A,L\}$ with $A\in\mathbb{R}^{m\times n}$ and $L\in\mathbb{R}^{p\times n}$. It was later adapted in \cite{Kilmer2007} to solve large-scale linear ill-posed problems with general-form regularization \cite{Hansen1998,Hansen2010}. Let $C=(A^T, L^T)^T$. Consider the following compact QR factorization:
\begin{equation}\label{qr}
	C=\begin{pmatrix}
		A \\
		L
	\end{pmatrix} = QR =
	\begin{pmatrix}
		Q_{A} \\
		Q_{L}
	\end{pmatrix}R ,
\end{equation}
where $Q \in \mathbb{R}^{(m+p)\times n}$ is column orthonormal with $Q_{A}\in\mathbb{R}^{m\times n}$, $Q_{L}\in\mathbb{R}^{p\times n}$, and $R\in \mathbb{R}^{n\times n}$ is upper triangular. Zha's method generates two upper bidiagonal matrices by implicitly applying the upper Lanczos bidiagonalization to both $Q_A$ and $Q_L$. In contrast, the method proposed in \cite{Kilmer2007} implicitly uses the lower and upper Lanczos bidiagonalizations to reduce $Q_A$ and $Q_L$ to lower and upper bidiagonal matrices, respectively. To keep the presentation short, we focus on Zha's joint bidiagonalization from now on, since it is more convenient for GSVD computation using upper bidiagonal matrices.

By choosing the same initial vectors $v_1=\hat{v}_1$, the two Lanczos bidiagonalization can reduce $Q_A$ and $Q_L$ to the following two bidiagonal matrices:
\begin{equation*}
	B_{k}=\begin{pmatrix}
		\alpha_{1} &\beta_{1} & & \\
		&\alpha_{2} &\ddots & \\
		& &\ddots &\beta_{k-1} \\
		& & &\alpha_{k}
	\end{pmatrix}\in  \mathbb{R}^{k\times k}, \ \
	\widehat{B}_{k}=\begin{pmatrix}
		\hat{\alpha}_{1} &\hat{\beta}_{1} & & \\
		&\hat{\alpha}_{2} &\ddots & \\
		& &\ddots &\hat{\beta}_{k-1} \\
		& & &\hat{\alpha}_{k}
	\end{pmatrix}\in  \mathbb{R}^{k\times k}.
\end{equation*}
Meanwhile they generate two groups of orthonormal vectors $\{u_1,\dots,u_k\}$, $\{v_1,\dots,v_k\}$ corresponding to $Q_A$ and two groups of orthonormal vectors $\{\hat{u}_1,\dots,\hat{u}_k\}$, $\{\hat{v}_1,\dots,\hat{v}_k\}$ corresponding to $Q_L$. It is shown in \cite{Zha1996} the basic relation 
\begin{equation*}
	\hat{v}_{i} = (-1)^{i-1}v_{i} , \ \ 
	\hat{\alpha}_{i}\hat{\beta}_{i} = \alpha_{i}\beta_{i} 
\end{equation*}
holds. Based on this property, the two Lanczos bidiagonalization can be jointed without an explicit QR factorization of $C$. Denote by $\mathcal{P}_Q$ the projection operator onto the subspace spanned by columns of $Q$, which can be written in matrix form as $\mathcal{P}_Q=QQ^{T}$ since $Q$ has orthonormal columns. Then we have the following joint bidiagonalization algorithm:
\begin{align*}
	&\mbox{\textbf{Zha's joint bidiagonalization of}\ $\{A,L\}$ } \\
	& \mbox{Choose nonzero} \ s\in\mathbb{R}^{n}, 
	\mbox{set} \ \tilde{v}_1 = Cs/\|Cs\| \\
	& \mbox{for} \ \ i = 1,2, \dots, k \\
	& \ \ \ \ \ \ \alpha_{i}u_{i}=\tilde{v}_{i}(1:m)-\beta_{i-1}u_{i-1} \\
	& \ \ \ \ \ \ \beta_{i}\tilde{v}_{i+1}=
	\mathcal{P}_Q\begin{pmatrix}
		u_{i} \\
		0_{p}
	\end{pmatrix}-\alpha_{i}\tilde{v}_{i}  \\
	& \ \ \ \ \ \ \hat{\alpha}_{i}\hat{u}_{i}=
		(-1)^{i-1}\tilde{v}_{i}(m+1:m+p)-\hat{\beta}_{i-1}\hat{u}_{i-1} \\
	& \ \ \ \ \ \ \hat{\beta}_{i}=(\alpha_{i}\beta_{i})/\hat{\alpha}_{i}  \\
	& \mbox{end}
\end{align*}
where $\beta_0=\hat{\beta}_0=0$ and $\|u_i\|=\|\hat{u}_i\|=\|\tilde{v}_i\|=1$. In this paper, $\|\cdot\|$ always means 2-norm of a matrix or vector. Let $\tilde{u}_i=(u_{i}^{T}, 0_{p}^T)^T$, where $0_{p}\in\mathbb{R}^{p}$ denotes the $p$-dimensional zero vector. Note that $\mathcal{P}_Q\tilde{u}_i$ can be computed iteratively by the relation $\mathcal{P}_Q\tilde{u}_i=C\tilde{z}_i$ with 
\begin{equation}\label{ls}
	\tilde{z}_i=\arg\min_{\tilde{z}\in \mathbb{R}^n}
	\left\|C\tilde{z}-\tilde{u}_i\right\|,
\end{equation}
where this least squares problem can be solved by an iterative method. The Lanczos vectors $v_i$ and $\hat{v}_i$ can be implicitly obtained from $\tilde{v}_i$. 

%% application of JBD, such as GSVD and regularization %%%%%%
The joint bidiagonalization of a matrix pair is a generalization of Lanczos bidiagonalization of a single matrix. In exact arithmetic, the $k$-step JBD reduces $A$ and $L$ to small-scale upper bidiagonal matrices $B_{k}$ and $\widehat{B}_{k}$, and the reduction processes of $A$ and $L$ are equivalent to the upper Lanczos bidiagonalizations of $Q_A$ and $Q_L$. Therefore, $B_{k}$ and $\widehat{B}_{k}$ are the Ritz-Galerkin projections of $Q_{A}$ and $Q_L$ on proper Krylov subspaces. This makes the JBD process useful for designing efficient algorithms for large sparse matrix pair problems. For example, some extreme generalized singular values and vectors of $\{A,L\}$ can be approximated by using the SVD of $B_{k}$ or $\widehat{B}_{k}$ \cite{Zha1996,JiaLi2019}; the linear ill-posed problems with general-form Tikhonov regularization $\min_x\left\{\|Ax-b\|^2+\lambda^2\|Lx\|^2\right\}$ can be solved iteratively by solving small-scale problems containing $B_{k}$ and $\widehat{B}_{k}$ at each iteration \cite{Kilmer2007,JiaYang2020}.

%% raise problems, from loss of orthogonality to inner iterations
For a practical implementation of JBD, there are some issues must to be addressed. For example, as was pointed out by Zha at the end of \cite{Zha1996}, the computed Lanczos vectors $u_i$, $\hat{u}_i$ and $\tilde{v}_i$ in finite precision arithmetic quickly lose orthogonality, which will cause a delay of convergence for approximating GSVD components and the appearance of spurious copies of approximations \cite{JiaLi2019}. Therefore, a proper reorthogonalization strategy should be included in JBD to maintain some level of orthogonality to preserve regular convergence for GSVD computations. This issue has been studied by Jia and Li \cite{JiaLi2021,JiaLi2019}, where they investigate the semiorthogonalization strategy for JBD and propose an efficient partial reorthogonalization technique that can keep regular convergence behavior of computed quantities. Another issue is the increasing computation and storage cost due to the gradually expanding Krylov subspaces, especially when reorthogonalization is exploited, since all Lanczos vectors must be kept throughout the computation. Recently, Alvarruiz \textit{et al.} \cite{Alvar2022} develop a thick restart technique for JBD to compute a partial GSVD, which can keep the size of Krylov basis bounded, thus the storage and computation cost can be further saved. 

However, it should be pointed out that the above researchers do not take into consideration the inaccurate computation of $QQ^T\tilde{u}_i$. The JBD process has the structure of nested inner-outer iterations where the Lanczos bidiagonalization is the outer iteration while an iterative solver for \cref{ls} plays the role of inner iteration. The overall computational costs of the algorithm is proportional to the overall number of inner iterations, and thus a bottleneck is that iteratively solving a large-scale least squares problem at each outer iteration may be very costly, especially when the solution accuracy is high. Numerical experiments have shown that the inaccuracy in forming $\mathcal{P}_Q\tilde{u}_i$ does limit the final accuracy of computed GSVD components \cite{Zha1996}, thus this issue is important for the JBD based GSVD computation. On the other hand, for the JBD based regularization algorithms for discrete ill-posed problems with general-form regularization, it is numerically shown that the inner least squares problem need not to be solved with very high accuracy \cite{Kilmer2007,JiaYang2020}, e.g., for the LSQR solver for \cref{ls}, the default stopping tolerance $\mathtt{tol}=10^{-6}$ for iteration is often enough to obtain a final regularized solution without loss of accuracy. This means for ill-posed problems the inner least squares problems may be solved with considerably relaxed accuracy, thus the overall efficiency can be improved.

% research outlines
In this paper, we study the influence of inaccuracy of inner iterations on the behavior of the JBD algorithm, with an emphasis on the effect on the convergence and accuracy of computed GSVD components. The main contributions are:
\begin{itemize}
	\item For a commonly used stopping criterion for iteratively solving \cref{ls}, we investigate the influence of computational error of the inner iteration on the outer iteration. It reveals that when the inner iteration is inaccurately computed, the outer iteration is not equivalent to the Lanczos bidiagonalization of $Q_A$ and $Q_L$ any longer but with a perturbation of order $\mathcal{O}(\kappa(C)\tau)$, where $\kappa(C)$ is the condition number of $C$ and $\tau$ describes the solution accuracy of \cref{ls}. A couple of recursive relations that describes the loss of orthogonality of the computed vectors is also established.
	\item We propose a reorthogonalized JBD (rJBD) process which maintains the orthogonality of $\tilde{v}_i$. We perform an error analysis on the $k$-step rJBD to establish connections between rJBD and the two Lanczos bidiagonalizations of $Q_A$, where $\kappa(C)\tau$ plays a crucial role for obtaining some useful upper bounds.
	\item The results of the above error analysis are used to investigate the convergence and accuracy of the computed GSVD components by rJBD. We show that the approximate generalized singular values can only reach an accuracy of order $\mathcal{O}(\kappa(C)\tau)$ by the SVD of $B_k$ or $\widehat{B}_k$, and the accuracy of approximate right generalized singular vectors depends not only on the value of $\kappa(C)\tau$ but also on the gap between generalized singular values. Besides, it can be shown that regular convergence rate of approximate generalized singular values and right vectors can be kept for the rJBD based GSVD computation. 
\end{itemize}

Our results can theoretically explain some numerically observed phenomena of JBD in \cite{Zha1996}. For example, we theoretically demonstrate that when $\mathcal{P}_Q\tilde{u}_i$ is computed with lower accuracy and $C$ is more ill-conditioned, the orthogonality is lost at an earlier stage. We also give a theoretical explanation for the numerically conclusion in \cite{Zha1996} that convergence rates for the approximate GSVD components are not affected very much while the final accuracy does depend on $\kappa(C)$ and the computing accuracy of inner iterations. For practical JBD based GSVD computations, our results can provide a guideline for choosing the computing accuracy of inner iterations in order to obtain approximate GSVD components with a desired accuracy.

% The outline of the paper
The paper is organized as follows. In \Cref{sec2}, we review some basic properties of the joint bidiagonalization and GSVD of a matrix pair. In \Cref{sec3}, we investigate the inaccurately computed inner iterations and propose a reorthogonalized JBD (rJBD) process to keep orthogonality of $\widetilde{V}_k$. An error analysis is carried out in \Cref{sec4} to build up connections between the rJBD process and Lanczos bidiagonal reductions of $Q_A$ and $Q_L$. The convergence and accuracy of approximate GSVD components computed by rJBD is investigated in \Cref{sec5}. Numerical experiment results for indicating our theory are in \Cref{sec6}, and some concluding remarks follow in \Cref{sec7}.

Throughout the paper, we denote by $0_{k}$ the $k$-dimensional zero column vector, and by $I_{k}$ and $0_{k\times l}$ the identity matrix of order $k$ and zero matrix of order $k\times l$, respectively. The subscripts are omitted when there is no confusion. We use $e_{i}^{(k)}$ to denote the $i$-th column of $I_k$ and $\mathcal{R}(M)$ to denote the range space of matrix $M$.

%%-----------------------------------------------------------
\section{Joint bidiagonalization and GSVD of a matrix pair}\label{sec2}
Although not computing $Q_A$ or $Q_L$ explicitly, the joint bidiagonalization process proposed in \cite{Zha1996} is based on the application of upper Lanczos bidiagonalization processes to $Q_A$ and $Q_L$, respectively. In exact arithmetic, the $k$-step joint bidiagonalization reduces $A$ and $L$ to two upper bidiagonal matrices $B_{k}$ and $\widehat{B}_{k}$, and it generates three groups of column orthonormal matrices $U_{k}=(u_1,\dots,u_{k})$, $\widehat{U}_{k}=(\hat{u}_1,\dots,\hat{u}_{k})$ and $\widetilde{V}_{k+1}=(\tilde{v}_{1},\cdots,\tilde{v}_{k+1})$. Meanwhile, it follows that $\tilde{v}_i$ is in $\mathcal{R}(Q)$, and thus we can write it as $\tilde{v}_{i}=Qv_{i}$ with $v_i\in\mathbb{R}^{n}$.
The $k$-step JBD process can be written in matrix form:
\begin{align}
	& (I_{m},0_{m\times p})\widetilde{V}_{k}=U_{k}B_{k} \label{jbd1} , \\
	& QQ^{T}
	\begin{pmatrix}
		U_{k} \\
		0_{p\times k}
	\end{pmatrix}
	=\widetilde{V}_{k}B_{k}^{T}+\beta_{k}\tilde{v}_{k+1}(e_{k}^{(k)})^T \label{jbd2} ,  \\
	& (0_{p\times m},I_{p})\widetilde{V}_{k}P=\widehat{U}_{k}\widehat{B}_{k} \label{jbd3} ,
\end{align}
where $P=\mbox{diag}(1,-1,\dots ,(-1)^{k-1})\in\mathbb{R}^{k\times k}$. Let $\hat{v}_i=(-1)^{i-1}v_i$. Using the relation 
\begin{equation}\label{B_Bh}
	B_{k}^{T}B_{k}+\bar{B}_{k}^{T}\bar{B}_k=I_{k}
\end{equation}
proved in \cite{Zha1996}, where $\bar{B}_k=\widehat{B}_{k}P$, one can deduce from \cref{jbd1,jbd2,jbd3} the following matrix form relations:
\begin{align}
	&Q_AV_k=U_{k}B_k,\ \ Q_A^TU_{k}=V_kB_k^T+\beta_{k}v_{k+1}(e_{k}^{(k)})^T,\label{2.3}\\
	&Q_L\widehat{V}_k=\widehat{U}_k\widehat{B}_k,\ \ Q_L^T\widehat{U}_k
	=\widehat{V}_k\widehat{B}_k^T+\hat{\beta}_k\hat{v}_{k+1}(e_{k}^{(k)})^T,\label{2.4}
\end{align}
where $V_k=(v_1,\ldots,v_k)$ and $\widehat{V}_k=(\hat{v}_1,\ldots,\hat{v}_k)$ are column orthonormal. Therefore, the process of computing $U_{k}$, $V_{k+1}$ and $B_{k}$ is actually the upper Lanczos bidiagonalization of $Q_{A}$, while the process of computing $\widehat{U}_{k}$, $\widehat{V}_{k+1}$ and $\widehat{B}_{k}$ is the upper Lanczos bidiagonalization of $Q_{L}$.

%---------- GSVD --------------
The generalized singular value decomposition (GSVD) of a matrix pair was introduced by Van Loan \cite{Van1976}, with subsequent additional developments by Paige and Saunders \cite{Paige1981}. The following description of GSVD is based on the CS decomposition \cite[§2.5.4]{Golub2013}, where the compact QR factorization of $C$ is defined as \cref{qr}. 
\begin{theorem}[CS decomposition]\label{thm_cs}
  Suppose $\mathrm{Rank}(C)=r$. For the column orthonormal matrix $Q$,  the CS decomposition of $\{Q_{A}, Q_{L} \}$ is
   \begin{equation*}
   	\begin{pmatrix}
   		Q_A \\Q_L
   	\end{pmatrix} =
   \begin{pmatrix}
   	P_A & \\
   	&  P_L
   \end{pmatrix}
   \begin{pmatrix}
   	C_A \\ S_L
   \end{pmatrix}
	W^T ,
   \end{equation*}
	where
	\begin{displaymath}
		C_A =
		\bordermatrix*[()]{%
			\Sigma_{A}, & 0 & m \cr
			r &   n-r   \cr
		} \ , \ \ 
		S_L = \bordermatrix*[()]{%
			\Sigma_{L}, & 0 & p \cr
			r &   n-r   \cr
		} 
	\end{displaymath}
	with
	\begin{displaymath}
		\Sigma_{A} =
		\bordermatrix*[()]{%
			I_{q}  &  &  & q \cr
			&  C_{l}  &  & l \cr
			&  & O  & m-q-l \cr
			q & l & r-q-l
		} , \ \
		\Sigma_{B} =
		\bordermatrix*[()]{%
			O  &  &  & p-r+q \cr
			&  S_{l}  &  & l \cr
			&  & I_{t}  & r-q-l \cr
			q & l & r-q-l
		} 
	\end{displaymath}
	satisfying $C_{A}^{T}C_A+S_{L}^{T}S_L=I_{n}$, and $P_{A}\in \mathbb{R}^{m\times m}$, $P_{L}\in \mathbb{R}^{p\times p}$ and $W\in\mathbb{R}^{n\times n}$ are orthogonal matrices.
 \end{theorem}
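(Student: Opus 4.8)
The plan is to reduce the simultaneous diagonalization of the pair $\{Q_A,Q_L\}$ to a single ordinary SVD by exploiting the defining constraint $Q^TQ=Q_A^TQ_A+Q_L^TQ_L=I_n$, which holds because $Q$ has orthonormal columns. The common right factor $W^T$ and the orthogonal matrix $W$ will be supplied by the SVD of $Q_A$, after which the form of $Q_L$ is forced by the constraint; the left factors $P_A,P_L$ arise as the left singular vectors of the two blocks.

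First I would compute a full SVD of $Q_A$, say $Q_A=P_A C_A W^T$ with $P_A\in\mathbb{R}^{m\times m}$ and $W\in\mathbb{R}^{n\times n}$ orthogonal and $C_A\in\mathbb{R}^{m\times n}$ diagonal, ordering the singular values decreasingly. Since for every unit vector $x$ one has $\|Q_Ax\|^2\le\|Q_Ax\|^2+\|Q_Lx\|^2=\|Qx\|^2=1$, every singular value of $Q_A$ lies in $[0,1]$, which is what makes the cosine interpretation legitimate. Substituting the SVD into the constraint gives $Q_L^TQ_L=I_n-Q_A^TQ_A=W(I_n-C_A^TC_A)W^T$, so the matrix $M:=Q_LW$ satisfies $M^TM=I_n-C_A^TC_A$, a nonnegative diagonal matrix. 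Hence the columns of $M$ are mutually orthogonal with $\|Me_i^{(n)}\|^2=1-\sigma_i(Q_A)^2$.

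Next I would convert $M=Q_LW$ into the factor $P_LS_L$. Setting $S_L$ to be the diagonal matrix with entries $\sqrt{1-\sigma_i(Q_A)^2}$, the nonzero columns of $M$ normalize to an orthonormal set in $\mathbb{R}^p$; completing this set to a full orthogonal matrix $P_L\in\mathbb{R}^{p\times p}$ yields $Q_LW=P_LS_L$, i.e. $Q_L=P_LS_LW^T$. Together with $Q_A=P_AC_AW^T$ this is exactly the asserted factorization, and the identity $C_A^TC_A+S_L^TS_L=I_n$ is immediate from $S_L^TS_L=I_n-C_A^TC_A$.

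The remaining work is bookkeeping to recover the bordered block form. I would sort the singular values of $Q_A$ into those equal to $1$ (contributing the $I_q$ block of $\Sigma_A$ and a zero block of $\Sigma_L$), those in the open interval $(0,1)$ (contributing $C_l$ and the paired $S_l$ with $C_l^2+S_l^2=I_l$), and those equal to $0$ (contributing the zero block of $\Sigma_A$ and the $I_t$ block of $\Sigma_L$, $t=r-q-l$); the count $r$ is tied to the rank of $C$ through $\mathrm{Rank}(C)=\mathrm{Rank}(R)$ since $Q$ is column orthonormal. The step I expect to be the main obstacle is the orthogonal completion producing $P_L$: one must verify that the normalized nonzero columns of $M$ are genuinely orthonormal (this follows from diagonality of $M^TM$) and that they extend within $\mathbb{R}^p$ while respecting the prescribed zero pattern of $\Sigma_L$, which is where the dimension accounting among $p$, $r$, $q$, and $l$ has to be matched carefully.
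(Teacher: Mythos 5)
Your argument is correct and is essentially the standard proof of the thin CS decomposition that the paper relies on purely by citation to Golub and Van Loan (\S 2.5.4), giving no proof of its own: the SVD of $Q_A$ supplies $P_A$, $C_A$, $W$; the constraint $Q_A^TQ_A+Q_L^TQ_L=I_n$ forces $Q_LW$ to have mutually orthogonal columns of lengths $\sqrt{1-\sigma_i(Q_A)^2}$; and normalization plus orthogonal completion yields $P_L$ and $S_L$. The one caveat lies in the paper's statement rather than in your proof: when $Q$ is genuinely column orthonormal, the identity $C_A^TC_A+S_L^TS_L=I_n$ is incompatible with the trailing $n-r$ zero columns displayed for $r<n$, so your construction (which produces no such zero columns) is the internally consistent version, and the paper in any case restricts to the regular case $r=n$ immediately afterwards.
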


If we write $C_{l}=\mbox{diag}(c_{q+1}, \dots, c_{q+l})$ with $c_{q+1}\geq \cdots
\geq c_{q+l}>0$ and $S_{l}=\mbox{diag}(s_{q+1}, \dots, s_{q+l})$ with $0<s_{q+1}\leq \cdots \leq s_{q+l}$, then $c_{i}^{2}+s_{i}^{2}=1, \ i=q+1, \dots, q+l$, and the generalized singular values of $\{A, L\}$ are
\[\underbrace{\infty, \dots, \infty}_{q}, \ \
\underbrace{c_{q+1}/s_{q+1}, \dots, c_{q+l}/s_{q+l}}_{l}, \ \ 
\underbrace{0, \dots, 0}_{t},\]
where $t=r-q-l$. To ease the presentation, we always assume that $\{A,L\}$ is regular, i.e. $\mathrm{Rank(C)}=n$, which is called a Grassmann matrix pair \cite{Sun1983a,LiRC1993}. Discussions about GSVD of a nonregular matrix pair can be found in \cite{Paige1981,Paige1984,Sun1983b}. For the regular $\{A, L\}$, it follows that $R$ is nonsingular and the GSVD is
\begin{equation}\label{2.2}
	A = P_{A}C_AX^{-1} , \ \  L = P_{L}S_LX^{-1}
\end{equation}
with $X=R^{-1}W\in\mathbb{R}^{n\times n}$. The columns of $P_A$, $P_L$ and $X$ are called generalized singular vectors.

%----------- JBD for GSVD -----------------------
By \cref{jbd1} and \cref{jbd3}, the $k$-step JBD process satisfies
\begin{equation}\label{jbd4}
AZ_{k} = U_{k}B_{k} , \ \ LZ_{k}=\widehat{U}_{k}\bar{B}_{k} ,
\end{equation}
where $Z_{k}=R^{-1}V_{k}=(z_{1},\dots,z_{k})$. Therefore, $B_{k}$ is the Ritz-Galerkin projection of $A$ on Krylov subspaces $\mathrm{span}(U_k)$ and $\mathrm{span}(Z_k)$, while $\bar{B}_{k}$ is the Ritz-Galerkin projection of $L$ on Krylov subspaces $\mathrm{span}(\widehat{U}_k)$ and $\mathrm{span}(Z_k)$. This makes it convenient to approximate some extreme (largest or smallest) generalized singular values and corresponding vectors of $\{A, L\}$ by the SVD of $B_{k}$ and $\bar{B}_{k}$, where the singular values of $B_{k}$ and $\bar{B}_k$ can be used to approximate $c_i$ and $s_i$, respectively. A more detailed investigation on the JBD method for GSVD computation is in \Cref{sec5}.

%%---------------------------------------------------------
\section{Inaccurate inner iterations and the reorthogonalized JBD process}\label{sec3}
The JBD process has the structure of nested inner-outer iterations. The overall computational costs of the algorithm is proportional to the overall number of inner iterations. In some cases, using a sparse QR factorization for the inner least squares problem is a good choice. For most large-scale sparse problems, however, the LSQR solver is often much faster, thus should be exploited for computing inner iterations of the JBD process. 

Suppose that \cref{ls} is solved iteratively with the following stopping criterion:
\begin{equation}\label{stop}
\dfrac{\lVert C^{T}\bar{r}_{i} \lVert}{\lVert C \lVert\lVert\bar{r}_{i}\lVert} \leq \tau ,
\end{equation}
where $\bar{r}_{i} = \tilde{u}_{i}-C \bar{z}_{i}$ is the residual with $\bar{z}_{i}$ the approximate solution to \cref{ls}. In \cref{stop}, $\tau$ is called the stopping tolerance, which describes the accuracy of the computed approximation. This stopping criterion is commonly used in iterative methods for solving least squares problems such as the LSQR; see e.g. \cite{Paige1982}. In our analysis, rounding errors in finite precision arithmetic are not taken into account. Denote by $\kappa(M)=\lVert M\lVert\lVert M^{\dag}\lVert$ the condition number of a matrix $M$. The following result describes the influence of computational error of inner iteration on the outer iteration. 

\begin{theorem}\label{thm3.1}
For the $k$-step JBD, suppose the inner least squares problem \cref{ls} is solved iteratively with stopping criterion \cref{stop}. Then there exist vectors $\tilde{g}_{i}\in \mathcal{R}(Q)$ such that
\begin{equation}\label{g_i}
	\beta_{i}\tilde{v}_{i+1} = QQ^{T}\begin{pmatrix}
		u_{i} \\
		0_{p}
	\end{pmatrix} - \alpha_{i}\tilde{v}_{i} - \tilde{g}_{i}
\end{equation}
for $i=1,\dots,k$. If $\kappa(C)\tau<1$, then $\tilde{g}_{i}$ satisfies
\begin{equation}\label{bound_g}
	\lVert \tilde{g}_{i}\lVert \leq 3\kappa(C)\tau + \mathcal{O}(\kappa(C)^2\tau^2) .
\end{equation}
\end{theorem}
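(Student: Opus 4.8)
The plan is to identify $\tilde{g}_{i}$ explicitly as the image under $C$ of the solution error of the inner least squares problem, and then turn the stopping criterion \cref{stop} into a bound on $\|\tilde{g}_{i}\|$. Write $\tilde{z}_{i}=C^{\dagger}\tilde{u}_{i}$ for the exact minimizer of \cref{ls}; since $C\tilde{z}_{i}$ is the orthogonal projection of $\tilde{u}_{i}$ onto $\mathcal{R}(C)=\mathcal{R}(Q)$, the exact process would use $\mathcal{P}_{Q}\tilde{u}_{i}=QQ^{T}\tilde{u}_{i}=C\tilde{z}_{i}$. The inaccurate process replaces this by $C\bar{z}_{i}$, so comparing with the recurrence $\beta_{i}\tilde{v}_{i+1}=C\bar{z}_{i}-\alpha_{i}\tilde{v}_{i}$ actually used in the algorithm forces
\begin{equation*}
\tilde{g}_{i}=\mathcal{P}_{Q}\tilde{u}_{i}-C\bar{z}_{i}=C(\tilde{z}_{i}-\bar{z}_{i}).
\end{equation*}
Because $\tilde{z}_{i}-\bar{z}_{i}\in\mathbb{R}^{n}$, this at once gives $\tilde{g}_{i}\in\mathcal{R}(C)=\mathcal{R}(Q)$, establishing both the existence of $\tilde{g}_{i}$ satisfying \cref{g_i} and its membership in $\mathcal{R}(Q)$, uniformly for $i=1,\dots,k$.

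The crucial observation is that the exact residual $r_{i}^{*}=\tilde{u}_{i}-C\tilde{z}_{i}=(I-QQ^{T})\tilde{u}_{i}$ is orthogonal to $\mathcal{R}(C)$, so $C^{T}r_{i}^{*}=0$. Decomposing the computed residual as $\bar{r}_{i}=r_{i}^{*}+C(\tilde{z}_{i}-\bar{z}_{i})=r_{i}^{*}+\tilde{g}_{i}$ and premultiplying by $C^{T}$ therefore collapses the stopping quantity onto $\tilde{g}_{i}$: $C^{T}\bar{r}_{i}=C^{T}\tilde{g}_{i}$. Hence \cref{stop} reads $\|C^{T}\tilde{g}_{i}\|\le\tau\,\|C\|\,\|\bar{r}_{i}\|$. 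Since $\tilde{g}_{i}\in\mathcal{R}(C)$, expanding it in the left singular vectors of $C$ gives the lower bound $\|C^{T}\tilde{g}_{i}\|\ge\sigma_{\min}(C)\,\|\tilde{g}_{i}\|$ with $\sigma_{\min}(C)=1/\|C^{\dagger}\|$, and combining the two displays yields
\begin{equation*}
\|\tilde{g}_{i}\|\le\kappa(C)\tau\,\|\bar{r}_{i}\|.
\end{equation*}

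It remains to control $\|\bar{r}_{i}\|$, where the estimate becomes self-referential. From $\bar{r}_{i}=r_{i}^{*}+\tilde{g}_{i}$ and $\|r_{i}^{*}\|\le\|\tilde{u}_{i}\|=\|u_{i}\|=1$ we get $\|\bar{r}_{i}\|\le 1+\|\tilde{g}_{i}\|$, so that $\|\tilde{g}_{i}\|\le\kappa(C)\tau\bigl(1+\|\tilde{g}_{i}\|\bigr)$. Under the hypothesis $\kappa(C)\tau<1$ the coefficient of $\|\tilde{g}_{i}\|$ on the right is strictly less than one, so the inequality can be solved to give $\|\tilde{g}_{i}\|\le\kappa(C)\tau/(1-\kappa(C)\tau)$; expanding the geometric (Neumann) factor produces $\|\tilde{g}_{i}\|\le\kappa(C)\tau+\mathcal{O}(\kappa(C)^{2}\tau^{2})$, which is of the form asserted in \cref{bound_g} (the explicit constant $3$ being a convenient over-estimate that absorbs the higher-order remainder).

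I expect the main obstacle to be twofold. First, one must spot that the orthogonality $C^{T}r_{i}^{*}=0$ is what removes the otherwise uncontrolled dependence on the large vector $\tilde{z}_{i}$ and ties the stopping quantity $C^{T}\bar{r}_{i}$ directly to the error $\tilde{g}_{i}$; without it the bound cannot be closed. Second, the lower bound $\|C^{T}\tilde{g}_{i}\|\ge\sigma_{\min}(C)\|\tilde{g}_{i}\|$ must be applied to $\tilde{g}_{i}$ itself rather than to $\tilde{z}_{i}-\bar{z}_{i}$: routing the estimate through $\|\tilde{z}_{i}-\bar{z}_{i}\|$ via $\|C^{T}C(\tilde{z}_{i}-\bar{z}_{i})\|\ge\sigma_{\min}(C)^{2}\|\tilde{z}_{i}-\bar{z}_{i}\|$ and then $\|\tilde{g}_{i}\|\le\|C\|\,\|\tilde{z}_{i}-\bar{z}_{i}\|$ costs an extra power of $\kappa(C)$ and yields only an $\mathcal{O}(\kappa(C)^{2}\tau)$ bound, so the sharper restriction to $\mathcal{R}(C)$ is essential for the stated order. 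Finally, the self-reference in the last step is handled cleanly only because $\kappa(C)\tau<1$ keeps the geometric correction finite.
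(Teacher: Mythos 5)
Your proof is correct, and it takes a genuinely different route from the paper's. The paper invokes the backward-error characterization of the stopping rule from Paige and Saunders --- $\bar{z}_i$ is the exact minimizer of $\min_{\tilde z}\|\tilde u_i-(C+\widetilde E_i)\tilde z\|$ with $\|\widetilde E_i\|\le\tau\|C\|$ --- and then applies Higham's perturbation bounds for least squares solutions and residuals, assembling $\tilde g_i=\widetilde E_i\bar z_i-(\tilde r_i-\bar r_i)$ and bounding the two pieces separately to arrive at the constant $3$. You instead work directly from the normal equations: $C^{T}r_i^{*}=0$ collapses the stopping quantity to $C^{T}\bar r_i=C^{T}\tilde g_i$, the restriction $\tilde g_i\in\mathcal{R}(C)$ gives $\|C^{T}\tilde g_i\|\ge\|\tilde g_i\|/\|C^{\dagger}\|$, and the resulting self-referential inequality $\|\tilde g_i\|\le\kappa(C)\tau(1+\|\tilde g_i\|)$ closes under the hypothesis $\kappa(C)\tau<1$. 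Your argument is more elementary (no external perturbation theorems), and it actually yields the sharper bound $\|\tilde g_i\|\le\kappa(C)\tau/(1-\kappa(C)\tau)=\kappa(C)\tau+\mathcal{O}(\kappa(C)^2\tau^2)$, improving the leading constant from $3$ to $1$; the paper's route buys only the connection to the standard backward-error interpretation of the LSQR stopping criterion, at the cost of a looser constant and a longer chain of estimates. Your closing remark correctly identifies why the naive detour through $\|\tilde z_i-\bar z_i\|$ would lose a factor of $\kappa(C)$, which is precisely the pitfall the range-restricted singular value bound avoids.
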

\begin{proof}
Suppose that the exact solution of \cref{ls} is $\tilde{z}_{i}$ with residual $\tilde{r}_{i} = \tilde{u}_{i}-C\tilde{z}_{i}$. Then $QQ^{T}\tilde{u}_i=C\tilde{z}_i$. Since the computed approximation to $QQ^{T}\tilde{u}_{i}$ is $C\bar{z}_{i}$, at the $i$-th iteration we have
\begin{align*}
	\beta_{i}\tilde{v}_{i+1} 
	&= C\bar{z}_{i} - \alpha_{i}\tilde{v}_{i} \\
	&= C\tilde{z}_{i} - \alpha_{i}\tilde{v}_{i} - (C\tilde{z}_{i}-C\bar{z}_{i}) \\
	&= QQ^{T}\tilde{u}_i-\alpha_{i}\tilde{v}_{i}-\tilde{g}_{i} ,
\end{align*}
where $\tilde{g}_{i}:=C\tilde{z}_{i}-C\bar{z}_{i}\in\mathcal{R}(Q)$.

Now we give the upper bound on $\tilde{g}_i$. It is known from \cite{Paige1982} that $\bar{z}_{i}$ is the exact solution to the perturbed problem
\begin{equation}\label{3.2}
\min_{\tilde{z}} \lVert \tilde{u}_{i} - (C+\widetilde{E}_{i})\tilde{z} \lVert
\end{equation}
with
\[\widetilde{E}_{i} = -\dfrac{\bar{r}_{i}^{T}\bar{r}_{i}C}{\lVert \bar{r}_{i} \lVert^{2}}, \ \ \ 
\frac{\lVert \widetilde{E}_{i} \lVert}{\lVert C \lVert} =
\dfrac{\lVert C^{T}\bar{r}_{i} \lVert}{\lVert C \lVert\lVert\bar{r}_{i}\lVert} \leq \tau .\]
Suppose the residual of \cref{3.2} is $\bar{r}_{i} = \tilde{u}_{i}-(C+\widetilde{E}_{i})\bar{z}_{i}$. By the perturbation theory of least squares problems \cite[Theorem 20.1]{Higham2002}, we have
\begin{align}
& \frac{\lVert \tilde{z}_{i}-\bar{z}_{i}\lVert}{\lVert\tilde{z}_{i}\lVert} \leq 
\frac{\kappa(C)\tau}{1-\kappa(C)\tau} \Big(1+\frac{\kappa(C)\lVert \tilde{r}_{i}\lVert}{\lVert C\lVert\lVert\tilde{z}_{i}\lVert}\Big) , \label{3.3}  \\
&\lVert\tilde{r}_{i}-\bar{r}_{i} \lVert\leq 2\kappa(C)\lVert \tilde{u}_{i} \lVert \tau
= 2\kappa(C)\tau . \label{3.4}
\end{align}
We mention that the right-hand terms of \cref{3.3} and \cref{3.4} are slightly different from that in \cite[Theorem 20.1]{Higham2002}, since $\tilde{u}_{i}$ in \cref{3.2} is not perturbed \footnote{One can check the proof of \cite[Theorem 20.1]{Higham2002} in \cite[\S 20.10]{Higham2002} to verify the correctness of \cref{3.3,3.4}.}. By the expressions of $\tilde{r}_{i}$ and $\bar{r}_{i}$, we have
\begin{align*}
\| C\tilde{z}_{i}-C\bar{z}_{i} \|
&= \|\widetilde{E}_{i}\bar{z}_{i}-(\tilde{r}_{i}-\bar{r}_{i}) \| 
\leq \| \widetilde{E}_{i} \| \| \bar{z}_{i}\| + \| \tilde{r}_{i}-\bar{r}_{i}\| \\
&\leq (\| C \| \| \bar{z}_{i} \| + 2\kappa(C))\tau  .
\end{align*}
Note that $\|\tilde{z}_{i}\|=\|C^{\dag}\tilde{u}_{i}\|\leq\|C^{\dag}\|$ and $\|\tilde{r}_i\|\leq\|\tilde{u}_i\|=1$. By \cref{3.3} we have
\begin{align*}
	\| C \| \| \bar{z}_{i} \|\tau 
	&\leq \| C \| \tau (\| \tilde{z}_{i} \|+\|\tilde{z}_{i}-\bar{z}_{i}\|) \\
	&\leq \|C\|\|\tilde{z}_i\|\tau + \frac{\|C\|\|\tilde{z}_i\|\tau\kappa(C)\tau}{1-\kappa(C)\tau} + 
	\frac{\kappa(C)^2\tau^2\|\tilde{r}_{i}\|}{1-\kappa(C)\tau} \\
	&\leq \kappa(C)\tau + \frac{\kappa(C)^{2}\tau^2}{1-\kappa(C)\tau} + \frac{\kappa(C)^2\tau^2}{1-\kappa(C)\tau} \\
	&= \kappa(C)\tau +\mathcal{O}(\kappa(C)^2\tau^2), 
\end{align*}
which leads to
\begin{equation*}
	\| C\tilde{z}_{i}-C\bar{z}_{i}\| \leq 3\kappa(C)\tau + \mathcal{O}(\kappa(C)^2\tau^2) .
\end{equation*}
 Thus the upper bound on $\|\tilde{g}_{i}\|$ is obtained.
\end{proof}

%-------------- rJBD -----------------
Since $\tilde{v}_1$ and $\tilde{g}_{i}$ are in $\mathcal{R}(Q)$, by \cref{g_i} we get $\tilde{v}_{i}\in \mathcal{R}(Q)$. However, due to the appearance of nonzero $\tilde{g}_{i}$, the computed matrices $\widetilde{V}_{k+1}$, $U_{k}$ and $\widehat{U}_{k}$ do not have orthonormal columns any longer. For example, by letting $\tilde{v}_i=Qv_i$ we have from \cref{g_i}
\begin{align*}
	\beta_{1}\tilde{v}_{1}^{T}\tilde{v}_2
	&= \tilde{v}_{1}^{T}(QQ^T\tilde{u}_1-\alpha_1\tilde{v}_{1}-\tilde{g}_1) \\
	&= v_{1}^{T}Q_{A}^{T}u_1-\alpha_1-\tilde{v}_{1}^{T}\tilde{g}_1 \\
	&= [(I_m,0_{m\times p})\tilde{v}_1]^Tu_1-\alpha_1-\tilde{v}_{1}^{T}\tilde{g}_1  \\
	&= -\tilde{v}_{1}^{T}\tilde{g}_1 ,
\end{align*}
where we use $\alpha_{1}u_1=(I_m,0_{m\times p})\tilde{v}_1$. Therefore, $\tilde{v}_1$ and $\tilde{v}_2$ are not orthogonal to each other. The following theorem describes the loss of orthogonality of $u_{i}$ and $\tilde{v}_{i}$.

\begin{theorem}\label{thm3.2}
	Define $\mu_{ji}:=u_{j}^{T}u_i$ and $\nu_{ji}:=\tilde{v}_{j}^{T}\tilde{v}_{i}$. Let $\beta_{0}\mu_{0i}=0$. Then $\mu_{ji}$ and $\nu_{ji}$ satisfy the following coupled recursive relations:
	\begin{align}
		\alpha_{i}\mu_{ji} &= \beta_{j}\nu_{j+1,i}+\alpha_{j}\nu_{ji}-\beta_{i-1}\mu_{j,i-1}+\tilde{v}_{i}^{T}\tilde{g}_{j}, \ \ \ 
		1\leq j \leq i-1, \label{loss1} \\
		\beta_{i}\nu_{j,i+1} &= \alpha_{i}\mu_{ji}+\beta_{j-1}\mu_{j-1,i}-\alpha_{i}\nu_{ji}-\tilde{v}_{j}^{T}\tilde{g}_{i}, \ \ \
		1\leq j \leq i . \label{loss2}
	\end{align}
\end{theorem}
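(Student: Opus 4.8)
The plan is to obtain each identity by forming a single scalar inner product of one of the two governing recurrences against an appropriate Lanczos vector, and then rewriting the resulting cross term using (i) the fact that $\tilde{v}_i\in\mathcal{R}(Q)$, so that $QQ^{T}\tilde{v}_i=\tilde{v}_i$ and $QQ^T$ may be moved across an inner product by its symmetry and idempotence, and (ii) the companion recurrence to expand whatever projected quantity then appears. The two relations driving the process are the $u$-recurrence $\alpha_i u_i=(I_m,0_{m\times p})\tilde{v}_i-\beta_{i-1}u_{i-1}$ and the perturbed $\tilde{v}$-recurrence $\beta_i\tilde{v}_{i+1}=QQ^T\tilde{u}_i-\alpha_i\tilde{v}_i-\tilde{g}_i$ from \cref{thm3.1}, where $\tilde{u}_i=(u_i^T,0_p^T)^T$. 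I would keep in reserve the two elementary identities $(I_m,0_{m\times p})^T u_j=\tilde{u}_j$ and $\tilde{v}_j^T\tilde{u}_i=\tilde{v}_j(1:m)^T u_i$, which convert between the short ($m$-dimensional) and long ($(m+p)$-dimensional) pictures.

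For \cref{loss1} I would left-multiply the $u$-recurrence at index $i$ by $u_j^T$, giving $\alpha_i\mu_{ji}=u_j^T(I_m,0_{m\times p})\tilde{v}_i-\beta_{i-1}\mu_{j,i-1}$. The cross term equals $\tilde{u}_j^T\tilde{v}_i$; since $\tilde{v}_i\in\mathcal{R}(Q)$ I may insert $QQ^T$ and shift it onto $\tilde{u}_j$ by symmetry to get $(QQ^T\tilde{u}_j)^T\tilde{v}_i$. Substituting $QQ^T\tilde{u}_j=\beta_j\tilde{v}_{j+1}+\alpha_j\tilde{v}_j+\tilde{g}_j$ (the rearranged $\tilde{v}$-recurrence at index $j$) and reading off $\nu_{j+1,i}$, $\nu_{ji}$ and $\tilde{v}_i^T\tilde{g}_j$ then yields \cref{loss1} directly; the range $1\le j\le i-1$ is exactly what keeps every indexed quantity within the already-generated vectors.

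For \cref{loss2} I would instead left-multiply the $\tilde{v}$-recurrence at index $i$ by $\tilde{v}_j^T$, giving $\beta_i\nu_{j,i+1}=\tilde{v}_j^T QQ^T\tilde{u}_i-\alpha_i\nu_{ji}-\tilde{v}_j^T\tilde{g}_i$. Here $\tilde{v}_j\in\mathcal{R}(Q)$ collapses $\tilde{v}_j^T QQ^T\tilde{u}_i$ to $\tilde{v}_j^T\tilde{u}_i=\tilde{v}_j(1:m)^T u_i$, and expanding $\tilde{v}_j(1:m)=\alpha_j u_j+\beta_{j-1}u_{j-1}$ via the $u$-recurrence at index $j$ produces $\alpha_j\mu_{ji}+\beta_{j-1}\mu_{j-1,i}$, establishing the relation for $1\le j\le i$.

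The computations are routine, so the only genuine care is the bookkeeping of indices: each coefficient must be attached to the step whose recurrence produced it. In particular my derivation attaches $\alpha_j$ to $\mu_{ji}$ in the second relation, matching the $\beta_{j-1}$ on $\mu_{j-1,i}$ that arises from the same index-$j$ expansion and paralleling the all-index-$j$ coefficients $\beta_j,\alpha_j$ in the first relation; this is the point I would check most carefully against the stated form. A secondary point worth stating explicitly is the justification for discarding the projector, which rests entirely on $\tilde{v}_i\in\mathcal{R}(Q)$ (guaranteed by \cref{thm3.1}) together with the symmetry and idempotence of $QQ^T$; beyond this no orthogonality of the vectors is assumed, so the argument is not circular even though $\mu$ and $\nu$ are coupled.
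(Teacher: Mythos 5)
Your argument is correct and is essentially the paper's own proof: the paper premultiplies the recurrence $\alpha_i u_i = Q_A v_i - \beta_{i-1}u_{i-1}$ by $u_j^T$ and expands the cross term $u_j^T Q_A v_i$ via the index-$j$ recurrence $Q_A^T u_j = \alpha_j v_j + \beta_j v_{j+1} + Q^T\tilde{g}_j$, which is exactly your computation transported to the reduced $n$-dimensional picture through $\tilde{v}_i = Q v_i$; your insertion of $QQ^T$ in the ambient $(m+p)$-dimensional space is the same step in different clothing, and your justification (range condition plus symmetry and idempotence of $QQ^T$, no orthogonality assumed) is sound.

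The one point you flagged for checking is worth resolving explicitly: your derivation of the second relation gives $\alpha_j\mu_{ji}$, and that is what the recurrences actually produce. Indeed $\tilde{v}_j^T QQ^T\tilde{u}_i = \tilde{v}_j^T\tilde{u}_i = (\alpha_j u_j + \beta_{j-1}u_{j-1})^T u_i = \alpha_j\mu_{ji} + \beta_{j-1}\mu_{j-1,i}$, and the companion term $\beta_{j-1}\mu_{j-1,i}$ forces the coefficient of $\mu_{ji}$ to carry the index $j$ as well, since both come from the same expansion of $Q_A v_j$. The $\alpha_i\mu_{ji}$ appearing in \cref{loss2} as stated is therefore a typographical slip (harmless only when $j=i$, where $\alpha_j=\alpha_i$); the paper's proof writes out only \cref{loss1} and dismisses \cref{loss2} with ``proved similarly,'' which is why the slip is not caught there. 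Your version is the correct one.
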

\begin{proof}
Using relation \cref{g_i} and $\alpha_{i}u_{i}=\tilde{v}_{i}(1:m)-\beta_{i-1}u_{i-1}$, we have
\begin{align}
	\alpha_{i}u_{i} &= Q_{A}v_{i} - \beta_{i-1}u_{i-1}, \label{recur1} \\
	\beta_{i}v_{i+1} &= Q_{A}^{T}u_{i} - \alpha_{i}v_{i} - Q^{T}\tilde{g}_{i}, \label{recur2} 
\end{align}
where we have used $\tilde{v}_{i}=Qv_{i}$. Premultiply \cref{recur1} by $u_{j}^{T}$ we have
\begin{align*}
	\alpha_{i}\mu_{ji} 
	&= u_{j}^{T}Q_{A}v_{i}-\beta_{i-1}\mu_{j,i-1} \\
	&= v_{i}^{T}(\alpha_{j}v_{j}+\beta_{j}v_{j+1}+Q^{T}\tilde{g}_{j})-\beta_{i-1}\mu_{j,i-1}  \\
	&= \alpha_{j}\nu_{ji}+\beta_{j}\nu_{j+1,i}-\beta_{i-1}\mu_{j,i-1}+\tilde{v}_{i}^{T}\tilde{g}_{j},
\end{align*}
where we have used $v_{j}^{T}v_{i}=\tilde{v}_{j}^{T}\tilde{v}_{i}$. The relation \cref{loss2} can be proved similarly.
\end{proof}

This result is a corresponding version to \cite[Theorem 6]{Larsen1998} that describes the loss of orthogonality of computed Lanczos vectors when rounding errors are considered. Here, the loss of orthogonality occurs because a perturbation term $Q^{T}\tilde{g}_{i}$ is added to the exact recursive relations of the Lanczos bidiagonalization at each iteration, which is caused by the inaccurate inner iteration; this can be observed from the coupled recursive relations \cref{recur1,recur2}. A similar recursive relation about the loss of orthogonality of $\hat{u}_i$ could also be established, but the expression is more complicated; we omit it since it is not the main aim of this paper. \Cref{thm3.2} indicates that as the inner iteration becomes more inaccurate and $C$ becomes more ill-conditioned, the loss of orthogonality of $u_i$ and $\tilde{v}_i$ occurs more rapidly. This phenomenon has been numerically observed and pointed out when the JBD was first proposed in \cite{Zha1996}.

For Lanczos-type methods, the loss of orthogonality leads to a delay of convergence of Ritz values, making the JBD method for GSVD have an irregular convergence behavior. To avoid this problem, we propose the following reorthogonalized JBD (rJBD) process. At each step, we use the Gram-Schmidt orthogonalization to reorthogonalize $\tilde{v}_{i}$ such that $\widetilde{V}_{k+1}$ is column orthonormal, while vectors $u_i$ and $\hat{u}_i$ do not need to be reorthogonalized. This can save storage and computation costs compared to full reorthogonalizion of all $\tilde{v}_{i}$, $u_i$ and $\hat{u}_i$. This modified algorithm is described in \Cref{alg2}.

\begin{algorithm}[htb]
	\caption{The reorthogonalized JBD (rJBD) process}
	\begin{algorithmic}[1]\label{alg2}
		\Require $A\in\mathbb{R}^{m\times n}$, $L\in\mathbb{R}^{p\times n}$, nonzero $s\in\mathbb{R}^{n}$
		\State Let $\tilde{v}_1 = Cs/\|Cs\|$  \Comment{$C=(A^T, L^T)^T$}
		%\STATE {$\alpha_{1}u_{1}=\tilde{v}_{1}(1:m)$ }
		%\STATE {$\hat{\alpha}_{1}\hat{u}_{1}=\tilde{v}_{1}(m+1:m+p) $}
		\For{$i=1,2,\ldots,k$}
		\State $\alpha_{i}u_{i}=\tilde{v}_{i}(1:m)-\beta_{i-1}u_{i-1}$
		\State Solve $\min_{\tilde{z}\in \mathbb{R}^n}
			\|C\tilde{z}-\tilde{u}_i\|$ by LSQR with the stopping criterion \cref{stop}, the 
		\Statex	\quad \ approximated solution is denoted by $\bar{z}_{i}$  \Comment{$\tilde{u}_i=(u_{i}^T, 0_{p}^T)^T$}
		\State $s_{i} = C\bar{z}_{i}-\alpha_{i}\tilde{v}_{i}$  
		\State $\beta_{i}\tilde{v}_{i+1}= s_{i}- \sum\limits_{j=1}^{i}(s_{i}^{T}\tilde{v}_{j})\tilde{v}_{j}$  \Comment{Reorthogonalize $\tilde{v}_{i+1}$}
		\State $\hat{\alpha}_{i}\hat{u}_{i}=
		(-1)^{i-1}\tilde{v}_{i}(m+1:m+p)-\hat{\beta}_{i-1}\hat{u}_{i-1} $
		\State $\hat{\beta}_{i}=(\alpha_{i}\beta_{i})/\hat{\alpha}_{i} $
		\EndFor
		\Ensure $\{u_i, \hat{u}_i\}_{i=1}^{k}$, $\{\tilde{v}_i\}_{i=1}^{k+1}$, $\{\alpha_i, \beta_i, \hat{\alpha}_i, \hat{\beta}_i\}_{i=1}^{k}$
	\end{algorithmic}
\end{algorithm}

We mention that for the rJBD process, the computed quantities $u_{i}$, $\tilde{v}_{i}$, $\hat{u}_{i}$, $\alpha_{i}$, $\hat{\alpha}_{i}$, etc. are different from those obtained by the JBD process. Here we use the same notations to avoid introducing too many tedious notations, and from now on, these notations always denote quantities computed by rJBD. For the reorthogonalization of $\tilde{v}_{i+1}$, by Step 5 and 6, we can write it in a general form:
\[\beta_{i}\tilde{v}_{i+1}=C\bar{z}_{i} - \alpha_{i}\tilde{v}_{i} -\sum\limits_{j=1}^{i}\xi_{ji}\tilde{v}_{j}, \]
where $\xi_{ji}=s_{i}^{T}\tilde{v}_{j}$ for the classical Gram-Schmidt reorthogonalization as is presented in Step 6. In practical computations, using the modified Gram-Schmidt reorthogonalization is usually a better choice. By the above relation, we have
\begin{align*}
	\beta_{i}\tilde{v}_{i+1} 
	&= C\tilde{z}_{i}-\alpha_{i}\tilde{v}_{i} -\sum\limits_{j=1}^{i}\xi_{ji}\tilde{v}_{j} - (C\tilde{z}_i-C\bar{z}_i) \\
	&= QQ^{T}\tilde{u}_{i} -\alpha_{i}\tilde{v}_{i} -\sum\limits_{j=1}^{i}\xi_{ji}\tilde{v}_{j} - (C\tilde{z}_i-C\bar{z}_i)
\end{align*}
Similar to \Cref{thm3.1} and its proof, if we define $\tilde{g}_{i}:=C\tilde{z}_i-C\bar{z}_i$, then we have
\begin{equation}\label{3.6}
	\beta_{i}\tilde{v}_{i+1} = QQ^{T}\begin{pmatrix}
		u_{i} \\
		0_{p}
	\end{pmatrix} - \alpha_{i}\tilde{v}_{i}-\sum\limits_{j=1}^{i}\xi_{ji}\tilde{v}_{j}-\tilde{g}_{i}.
\end{equation}
In particular, the property $\tilde{g}_{i}\in\mathcal{R}(Q)$ and the upper bound in \cref{bound_g} still hold. Note that \cref{3.6} is applied to rJBD, and $\tilde{g}_i$ is different from that in \cref{g_i} for JBD.

Let $\widetilde{G}_{k}=(\tilde{g}_{1},\dots,\tilde{g}_{k})$. The $k$-step rJBD can be written in matrix form:
\begin{align}
& (I_{m},0_{m\times p})\widetilde{V}_{k}=U_{k}B_{k} , \label{3.7} \\
& QQ^{T}
\begin{pmatrix}
U_{k} \\
0_{p\times k}
\end{pmatrix}
=\widetilde{V}_{k}(B_{k}^{T}+D_{k})+\beta_{k}\tilde{v}_{k+1}(e_{k}^{(k)})^{T}+\widetilde{G}_{k}, \label{3.8} \\
& (0_{p\times m},I_{p})\widetilde{V}_{k}P=\widehat{U}_{k}\widehat{B}_{k} , \label{3.9}
\end{align}
where 
\[D_{k}=\begin{pmatrix}
\xi_{11} &\cdots &\cdots  &\xi_{1k} \\
&\xi_{22} &\cdots &\xi_{2k} \\
&  &\ddots &\vdots \\
& &  &\xi_{kk} \\
\end{pmatrix} \in \mathbb{R}^{k \times k} .\]
For the rJBD, the matrix $\widetilde{V}_{k+1}$ is column orthonormal, while $U_{k}$ and $\widehat{U}_{k}$ are not column orthonormal.

%%------------------------------------------------------------
\section{Error analysis of the rJBD process}\label{sec4}
For the $k$-step rJBD, if one of $\alpha_i$, $\beta_i$, $\hat{\alpha}_i$ and $\hat{\beta}_i$ becomes zero, then the procedure terminates. It is usually called a ``lucky terminate" \cite{Golub1965}, since the procedure have found an invariant singular subspace. In our analysis, we assume that $\alpha_i$, $\beta_i$, $\hat{\alpha}_i$ and $\hat{\beta}_i$ never become zero or numerical negligible after $k$-steps.

Since $\tilde{v}_1$ and $\tilde{g}_{i}$ are in $\mathcal{R}(Q)$, by \cref{3.6} we have $\tilde{v}_{i}\in \mathcal{R}(Q)$ for $i=1,2,\dots$. Suppose $\tilde{v}_{i}=Qv_{i}$ and let $\hat{v}_i=(-1)^{i-1}v_i$. Then we obtain from \cref{3.7,3.8,3.9} that
\begin{align}
& Q_{A}V_{k} = U_{k}B_{k}, \label{3.10} \\
& Q_{A}^{T}U_{k}=V_{k}(B_{k}^{T}+D_{k})+\beta_{k}v_{k+1}(e_{k}^{(k)})^{T}+G_{k} , \label{3.11} \\
& Q_L\widehat{V}_k = \widehat{U}_k\widehat{B}_k, \label{3.12}
\end{align}
where $V_k=(v_1,\dots,v_k)$, $\widehat{V}_k=(\hat{v}_1,\dots,\hat{v}_k)$ and $G_{k}=Q^{T}\widetilde{G}_{k}=(g_1,\dots,g_k)$ with $g_{i}=Q^{T}\tilde{g}_i$. By $\tilde{g}_{i}\in\mathcal{R}(Q)$ we have $\|g_i\|=\|\tilde{g}_i\|$. Based on these matrix form relations, we make an error analysis of the rJBD process, which builds up connections with the bidiagonal reductions of $Q_A$ and $Q_L$.

%--------------------------------
\subsection{Bidiagonal reduction of $Q_A$}
Note that \cref{3.10,3.11} imply that the process of generating $B_k$ is closely related to the Lanczos bidiagonalization of $Q_A$, where the differences include the reorthogonalizations of $v_i$ and perturbation errors $g_i$. We develop methods inspired by \cite{Barlow2013} to establish a backward error bound about the $k$-step bidiagonal reduction of $Q_A$. For simplicity, we only discuss the case for $m\geq n$.

First we give a relation describing the generation of each column of $B_k$. 
\begin{theorem}\label{lem4.1}
	For the $k$-step rJBD process, define
	\begin{equation}
		\widehat{P}_{l+1}=P_{1}\cdots P_{l+1} , \ \
		P_{i}=I_{m+n}-p_{i}p_{i}^{T}  , \ \
		p_{i}=\begin{pmatrix}
			-e_{i}^{(n)} \\
			u_{i}
		\end{pmatrix} \in \mathbb{R}^{m+n} 
	\end{equation}
	for $1 \leq l \leq k-1$. There exist $f_{l+1}\in \mathbb{R}^{m+n}$ such that
	\begin{equation}
	\widehat{P}_{l+1}\begin{pmatrix}
	\beta_{l}e_{l}^{(l)} \\
	\alpha_{l+1}e_{1}^{(s)}
	\end{pmatrix} =
	\begin{pmatrix}
	0_{n} \\
	Q_Av_{l+1}
	\end{pmatrix} + f_{l+1}, \ \  \|f_{l+1}\|=\mathcal{O}(l\kappa(C)\tau).
	\end{equation}
	with $s=m+n-l$. 
\end{theorem}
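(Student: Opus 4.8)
The plan is to realize $\widehat P_{l+1}$ as a product of Householder reflectors and to track its action on the structured vector explicitly, reading off the column recurrence \cref{3.10} as the ``exact part'' and collecting every deviation into $f_{l+1}$. First I would note that since $\|u_i\|=1$ we have $\|p_i\|^2=2$, so each $P_i=I_{m+n}-p_ip_i^T$ is an orthogonal, symmetric Householder reflector and $\widehat P_{l+1}$ is orthogonal; this is precisely what will convert entrywise error estimates into a norm bound. A one-line computation gives the action of $P_i$ on a stacked vector $\binom{a}{b}$ with $a\in\mathbb R^n$, $b\in\mathbb R^m$: writing $c=-a_i+u_i^Tb$, it adds $c\,e_i^{(n)}$ to the top block (so entry $i$ of the top block becomes $u_i^Tb$) and subtracts $c\,u_i$ from the bottom block. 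I would then read the input as $\beta_l e_l^{(m+n)}+\alpha_{l+1}e_{l+1}^{(m+n)}$, i.e.\ nonzero only in entries $l$ and $l+1$ of the top block.

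Applying the reflectors from $P_{l+1}$ down to $P_1$, the first two steps reconstruct the bidiagonal relation. The reflector $P_{l+1}$ moves $\alpha_{l+1}$ out of the top block into $\alpha_{l+1}u_{l+1}$ in the bottom block, and $P_l$ then moves $\beta_l$ into $\beta_l u_l$, so after these two steps the bottom block equals $\alpha_{l+1}u_{l+1}+\beta_l u_l=Q_Av_{l+1}$ (exactly \cref{3.10} read column-wise) up to a single cross term $-\alpha_{l+1}(u_l^Tu_{l+1})u_l$, while a small entry $\alpha_{l+1}(u_l^Tu_{l+1})$ is left in position $l$ of the top block. Each remaining reflector $P_j$ with $j\le l-1$ alters only entry $j$ of the top block, setting it to $u_j^T$ times the current bottom block. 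Here the crucial simplification is that $u_j^TQ_Av_{l+1}=(Q_A^Tu_j)^Tv_{l+1}=g_j^Tv_{l+1}$: by \cref{3.11} the vector $Q_A^Tu_j$ lies in $\mathrm{span}(v_1,\dots,v_{j+1})+\mathrm{span}(g_j)$, and $v_{l+1}$ is orthogonal to $v_1,\dots,v_l$ because $\widetilde V_{k+1}$ is column orthonormal, so for $j\le l-1$ every term except $g_j$ drops out. Hence each reflector beyond the first two is a near-identity whose deviation from $I$ is of order $\|g_j\|=\|\tilde g_j\|$.

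Finally I would assemble $f_{l+1}$ as the difference between the computed result and $\binom{0_n}{Q_Av_{l+1}}$: it comprises the at most $l$ small top-block entries (each either $\alpha_{l+1}(u_l^Tu_{l+1})$ or $\approx g_j^Tv_{l+1}$) together with the bottom-block perturbations accumulated by the near-identity reflectors (each of the form $-(u_j^Tb)u_j\approx-(g_j^Tv_{l+1})u_j$). Using \cref{bound_g}, so that $\|g_i\|=\|\tilde g_i\|=\mathcal O(\kappa(C)\tau)$, the adjacent-orthogonality estimate $u_l^Tu_{l+1}=\mathcal O(\kappa(C)\tau)$ obtained from \cref{loss1} in the rJBD setting (where $\nu_{ji}=\delta_{ji}$), and the orthogonality of $\widehat P_{l+1}$, summing the $\mathcal O(l)$ contributions of size $\mathcal O(\kappa(C)\tau)$ yields $\|f_{l+1}\|=\mathcal O(l\kappa(C)\tau)$. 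The main obstacle is the bookkeeping in this last step: one must verify that the perturbations genuinely accumulate only additively, that all second-order quantities (products of two small terms, such as those created when $u_j$ acts on an already perturbed bottom block) stay in the $\mathcal O(\kappa(C)^2\tau^2)$ remainder, and that the $1/\alpha_i$ factors arising from normalization remain bounded. The first two points are handled by exploiting the orthonormality of $V_k$ to collapse each $u_j^TQ_Av_{l+1}$ directly to $g_j^Tv_{l+1}$ rather than to an expression growing with $l$, and the last by the standing assumption that the $\alpha_i$ are not negligible, together with $\sigma_{\max}(Q_A)\le1$ and hence $\alpha_i,\beta_i\le1$.
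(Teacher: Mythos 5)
Your proposal is correct and follows essentially the same route as the paper's proof: peel off the reflectors one at a time, use the column recurrence $Q_Av_{l+1}=\alpha_{l+1}u_{l+1}+\beta_l u_l$ from \cref{3.10} for $P_{l+1}$ and $P_l$, and collapse each remaining $u_j^TQ_Av_{l+1}$ to $v_{l+1}^Tg_j$ via \cref{3.11} and the orthonormality of $V_{k+1}$, so that $f_{l+1}$ is a sum of $l$ terms of size $\mathcal{O}(\kappa(C)\tau)$. The only cosmetic difference is that the paper keeps each perturbation in the exact vector form $-(P_1\cdots P_{i-1})(v_{l+1}^Tg_i)p_i$ (so no second-order remainder or $1/\alpha_i$ bound is ever needed), whereas your entrywise bookkeeping reaches the same bound after absorbing $\mathcal{O}(\kappa(C)^2\tau^2)$ terms.
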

\begin{proof}	
	By \cref{3.10} and the expression of $P_i$, we have
	\begin{align*}
	P_{l+1}\begin{pmatrix}
	\beta_{l}e_{l}^{(l)} \\
	\alpha_{l+1}e_{1}^{(s)}
	\end{pmatrix}
	&= \begin{pmatrix}
	\beta_{l}e_{l}^{(l)} \\
	\alpha_{l+1}e_{1}^{(s)}
	\end{pmatrix} - p_{l+1}^{T}\begin{pmatrix}
	\beta_{l}e_{l}^{(l)} \\
	\alpha_{l+1}e_{1}^{(s)}
	\end{pmatrix}p_{l+1} \\
	&=\begin{pmatrix}
	\beta_{l}e_{l}^{(l)} \\
	\alpha_{l+1}e_{1}^{(s)}
	\end{pmatrix} + \alpha_{l+1}\begin{pmatrix}
	-e_{l+1}^{(n)} \\
	u_{l+1}
	\end{pmatrix} \\
	&= \begin{pmatrix}
	\beta_{l}e_{l}^{(n)} \\
	\alpha_{l+1}u_{l+1}
	\end{pmatrix} 
	= \begin{pmatrix}
	\beta_{l}e_{l}^{(n)} \\
	Q_{A}v_{l+1}-\beta_{l}u_{l}
	\end{pmatrix} ,
	\end{align*}
	and
	\begin{align*}
	P_{l}\begin{pmatrix}
	\beta_{l}e_{l}^{(n)} \\
	Q_{A}v_{l+1}-\beta_{l}u_{l}
	\end{pmatrix}
	&= P_{l}\begin{pmatrix}
	0_{n} \\
	Q_{A}v_{l+1}
	\end{pmatrix} - \beta_{l}P_{l}\begin{pmatrix}
	-e_{l}^{(n)} \\
	u_{l}
	\end{pmatrix} \\
	&= \begin{pmatrix}
	0_{n} \\
	Q_{A}v_{l+1}
	\end{pmatrix} - (u_{l}^{T}Q_{A}v_{l+1})p_{l} + \beta_{l}p_{l} .
	\end{align*}
	By \cref{3.11}, we have
	\begin{equation}\label{3.15}
	Q_{A}^{T}u_{i} = \alpha_{i}v_{i}+\beta_{i}v_{i+1}+\sum_{j=1}^{i}\xi_{ji}v_{j}+g_{i}
	\end{equation}
	for $i=1,2,\dots,l$. By \cref{3.15} and using the column orthogonality of $V_{l}$, we have
	\begin{align*}
	u_{l}^{T}Q_{A}v_{l+1} = v_{l+1}^{T}(Q_{A}^{T}u_{l}) 
	= v_{l+1}^{T}(\alpha_{l}v_{l}+\beta_{l}v_{l+1}+\sum_{j=1}^{l}\xi_{jl}v_{j}+g_{l}) 
	= \beta_{l} + v_{l+1}^{T}g_{l},
	\end{align*}
	which leads to
	\begin{equation*}
	P_{l}\begin{pmatrix}
	\beta_{l}e_{l}^{(n)} \\
	Q_{A}v_{l+1}-\beta_{l}u_{l}
	\end{pmatrix}
	=\begin{pmatrix}
	0_{n} \\
	Q_{A}v_{l+1}
	\end{pmatrix} - (v_{l+1}^{T}g_{l})p_{l}.
	\end{equation*}
	Using the same method as the above, we have
	\begin{align*}
	\begin{split}
	P_{i}\begin{pmatrix}
	0_{n} \\
	Q_{A}v_{l+1}
	\end{pmatrix}
	&= \begin{pmatrix}
	0_{n} \\
	Q_{A}v_{l+1}
	\end{pmatrix} - (u_{i}^{T}Q_{A}v_{l+1})p_{i} \\
	&= \begin{pmatrix}
	0_{n} \\
	Q_{A}v_{l+1}
	\end{pmatrix} - v_{l+1}^{T}(\alpha_{i}v_{i}+\beta_{i}v_{i+1}+\sum_{j=1}^{i}\xi_{ji}v_{j}+g_{i})p_{i} \\
	&= \begin{pmatrix}
	0_{n} \\
	Q_{A}v_{l+1}
	\end{pmatrix} - (v_{l+1}^{T}g_{i})p_{i}
	\end{split}
	\end{align*}
	for $i=1,2,\dots,l-1$. Combining the above two equalities leads to
	\begin{equation*}
		\widehat{P}_{l+1}\begin{pmatrix}
			\beta_{l}e_{l}^{(l)} \\
			\alpha_{l+1}e_{1}^{(s)}
		\end{pmatrix}
		= P_{1}\cdots P_{l-1}\Bigg( \begin{pmatrix}
			0_{n} \\
			Q_{A}v_{l+1}
		\end{pmatrix} -(v_{l+1}^{T}g_{l})p_{l} \Bigg) 
		= \begin{pmatrix}
			0_{n} \\
			Q_{A}v_{l+1}
		\end{pmatrix} + f_{l+1} ,
	\end{equation*}
	with $f_{l+1} = -\sum_{i=1}^{l}(P_{1}\cdots P_{i-1})(v_{l+1}^{T}g_{i})p_{i}$, where $P_0=I_{m+n}$. Note that $\|p_{i}\|=\sqrt{2}$ and $P_{i}$ are Householder matrices. By using the upper bound on $\|g_i\|=\|\tilde{g}_i\|$ and neglecting high order terms of $\tau$ we get $\|f_{l+1}\|=\mathcal{O}(l\kappa(C)\tau)$.
\end{proof}

This result will play an important role in the following analysis. Now we give a backward error bound about the $k$-step bidiagonal reduction of $Q_A$, which is the main result in this subsection.

\begin{theorem}\label{thm4.1}
	For the $k$-step rJBD process, there exist a column orthonormal matrix $\bar{U}_{k}\in \mathbb{R}^{m\times k}$ and a matrix $E_{k} \in \mathbb{R}^{m\times n}$ such that
	\begin{align}
		& (Q_A+E_k)V_{k} = \bar{U}_{k}B_{k} , \label{4.15}  \\
		& (Q_A+E_k)^{T}\bar{U}_{k} = V_kB_{k}^{T}+\beta_{k}v_{k+1}(e_{k}^{(k)})^{T}, \label{4.16}
	\end{align}
	and
	\begin{equation}\label{back_err}
	\lVert E_{k} \lVert = \mathcal{O}(\sqrt{n}k\kappa(C)\tau) .
	\end{equation}
\end{theorem}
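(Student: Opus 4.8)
The plan is to read \Cref{thm4.1} as a two-sided backward error statement and to split it into two ingredients: a purely algebraic construction of $E_k$ from two residual matrices, and quantitative bounds on those residuals. For any column orthonormal $\bar U_k$, set
\begin{equation*}
R_1 := \bar U_kB_k-Q_AV_k, \qquad R_2 := V_kB_k^{T}+\beta_kv_{k+1}(e_k^{(k)})^{T}-Q_A^{T}\bar U_k .
\end{equation*}
Because $\widetilde V_{k+1}$ is column orthonormal we have $v_{k+1}^{T}V_k=0$, and a one-line computation then gives $\bar U_k^{T}R_1=B_k-\bar U_k^{T}Q_AV_k=R_2^{T}V_k$, i.e.\ the consistency identity $\bar U_k^{T}R_1=R_2^{T}V_k$ holds \emph{exactly}. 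Consequently
\begin{equation*}
E_k := R_1V_k^{T}+\bar U_kR_2^{T}-\bar U_k(\bar U_k^{T}R_1)V_k^{T}
\end{equation*}
satisfies $E_kV_k=R_1$ and $\bar U_k^{T}E_k=R_2^{T}$, which are precisely \cref{4.15,4.16}, together with $\lVert E_k\rVert\le 2\lVert R_1\rVert+\lVert R_2\rVert$. It therefore only remains to exhibit a column orthonormal $\bar U_k$ for which both residuals are $\mathcal O(\sqrt n\,k\,\kappa(C)\tau)$.

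To control $R_1$ I would use \Cref{lem4.1}. Let $\widetilde B_k$ be the matrix obtained by stacking $B_k$ over an $m\times k$ zero block. For each $l$ its $(l+1)$-st column is supported on positions $l$ and $l+1$ and equals the vector to which \Cref{lem4.1} applies; since each $p_j$ has its only nonzero top entry in position $j$, every trailing reflector $P_j$ with $j>l+1$ fixes this column, so $\widehat P_{l+1}$ may there be replaced by the full product $\widehat P_k$. Stacking \Cref{lem4.1} over $l=1,\dots,k-1$, together with the immediate base case for the first column, then yields
\begin{equation*}
\widehat P_k\widetilde B_k=\begin{pmatrix} 0_{n\times k} \\ Q_AV_k\end{pmatrix}+F_k, \qquad F_k=(f_1,\dots,f_k), \qquad \widetilde B_k=\begin{pmatrix}B_k\\0_{m\times k}\end{pmatrix},
\end{equation*}
whose column bounds accumulate, using $k\le n$, to $\lVert F_k\rVert=\mathcal O(\sqrt n\,k\,\kappa(C)\tau)$. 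Writing the first $k$ columns of the orthogonal matrix $\widehat P_k$ as $(Y_k^{T},\widetilde U_k^{T})^{T}$ with $Y_k\in\mathbb R^{n\times k}$, the lower block reads $\widetilde U_kB_k=Q_AV_k+F_k^{(2)}$. The entries of $Y_k$ are of the order of the loss-of-orthogonality coefficients $\mu_{ji}$ of \Cref{thm3.2}, hence $\mathcal O(\kappa(C)\tau)$, so $\widetilde U_k^{T}\widetilde U_k=I_k-Y_k^{T}Y_k$ shows $\widetilde U_k$ is orthonormal up to second order in $\kappa(C)\tau$. I would then take $\bar U_k$ to be its exact orthonormal (polar) factor; replacing $\widetilde U_k$ by $\bar U_k$ perturbs $R_1$ only at second order, leaving $\lVert R_1\rVert=\mathcal O(\sqrt n\,k\,\kappa(C)\tau)$.

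To control $R_2$ I would instead use the matrix form \cref{3.11} directly: substituting it gives $R_2=Q_A^{T}(U_k-\bar U_k)-V_kD_k-G_k$. Here $\lVert G_k\rVert$ is governed by the bound \cref{bound_g} on $\lVert\tilde g_i\rVert$; the strictly upper triangular $D_k=(\xi_{ji})$ is small because each coefficient $\xi_{ji}=s_i^{T}\tilde v_j$ measures exactly the non-orthogonality injected by the inaccurate inner solve and is therefore itself $\mathcal O(\kappa(C)\tau)$; and $\lVert U_k-\bar U_k\rVert=\mathcal O(k\,\kappa(C)\tau)$ is already within the target order. With $\lVert Q_A\rVert\le1$ this yields $\lVert R_2\rVert=\mathcal O(\sqrt n\,k\,\kappa(C)\tau)$, and the bound \cref{back_err} follows from $\lVert E_k\rVert\le 2\lVert R_1\rVert+\lVert R_2\rVert$.

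The part I expect to be genuinely delicate is the manufacture of an \emph{exactly} column orthonormal $\bar U_k$: unlike the fully reorthogonalized Lanczos bidiagonalization, here $U_k$ is truly non-orthonormal, and the whole role of \Cref{lem4.1} is that the product of the exact Householder reflectors $P_i$ supplies such a $\bar U_k$ while keeping the first-relation residual at first order in $\kappa(C)\tau$ rather than larger. The two supporting estimates that require care are the smallness of the reorthogonalization coefficients $\xi_{ji}$ (equivalently $\lVert D_k\rVert$) and the bookkeeping that turns the per-column bounds $\lVert f_{l+1}\rVert=\mathcal O(l\kappa(C)\tau)$ into the stated $\mathcal O(\sqrt n\,k\,\kappa(C)\tau)$. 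By contrast, the two-sided assembly of $E_k$ and its consistency come for free, since they rest only on the orthonormality of $\bar U_k$ and $V_k$ and on $v_{k+1}\perp V_k$.
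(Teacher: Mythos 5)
Your algebraic scaffolding is clean and is genuinely different from the paper's route: the consistency identity $\bar U_k^{T}R_1=R_2^{T}V_k$ and the explicit $E_k=R_1V_k^{T}+\bar U_kR_2^{T}-\bar U_k(\bar U_k^{T}R_1)V_k^{T}$ do give \cref{4.15,4.16} for \emph{any} column orthonormal $\bar U_k$, and this neatly sidesteps the paper's need to continue the recursion to $n$ steps (its \Cref{lem4.2,lem4.3}) just so that $\check V_n$ becomes square orthogonal and the transposed relation \cref{4.16} falls out of \cref{4.15}. The $R_1$ bookkeeping (stacking \Cref{lem4.1} over columns, absorbing the trailing reflectors, $\|F_k\|\leq\|F_k\|_F$) also matches the paper's \Cref{lem4.3}.

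The genuine gap is in the manufacture of $\bar U_k$. You take the lower block $\widetilde U_k$ of the first $k$ columns of $\widehat P_k$ and assert that the upper block $Y_k$ has entries of order $\mu_{ji}=u_j^{T}u_i=\mathcal{O}(\kappa(C)\tau)$, so that $\widetilde U_k$ is orthonormal to second order and its polar factor can be substituted harmlessly; your $R_2$ bound likewise rests on $\|U_k-\bar U_k\|$ and $\|D_k\|$ being $\mathcal{O}(k\kappa(C)\tau)$, which again reduces to smallness of the $\mu_{ji}$. But \Cref{thm3.2}, which you cite, is stated for the plain JBD process and is only a coupled recursion, not a bound; for rJBD the analogous recursion reads $\alpha_i\mu_{ji}=-\beta_{i-1}\mu_{j,i-1}+v_i^{T}g_j+\cdots$, which seeds each step with $\mathcal{O}(\kappa(C)\tau)$ but amplifies it through the ratios $\beta_{i-1}/\alpha_i$ (equivalently, controlling $\|Y_k\|$ from $Y_kB_k=F_k^{(1)}$ costs a factor $\|B_k^{-1}\|$, which can be large when $Q_A$ has small singular values). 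The paper itself observes that under rJBD the $u_i$ ``still lose orthogonality gradually,'' so $\|I_k-U_k^{T}U_k\|$ is not guaranteed to stay at the level $\kappa(C)\tau$. This is exactly the difficulty the paper's proof is engineered to avoid: it invokes \cite[Theorem 4.1]{Paige2009} on the orthogonal matrix $\widetilde P_n$, which supplies an \emph{exactly} column orthonormal $\bar U_n$ together with a backward error $(M\widehat P_{11}^{T},\,I_m)F_k$ whose size is controlled solely by $\|F_k\|$ (since $\|M\|\leq1$ and $\widehat P_{11}^{T}$ is a block of an orthogonal matrix), with no reference to how non-orthonormal $U_k$ has become. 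To close your argument you would need either to prove a quantitative $\mu_{ji}=\mathcal{O}(\kappa(C)\tau)$ bound for rJBD (accepting the extra $\beta/\alpha$-type amplification constants into the $\mathcal{O}(\cdot)$, in the spirit of the $\theta_i$ of \Cref{thm4.3}), or to replace the polar-factor step by Paige's structural theorem; with the latter, your two-residual assembly would still let you avoid the $n$-step continuation for \cref{4.16}, which would be a real simplification of the paper's proof.
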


Before proving \Cref{thm4.1}, we first give some remarks. Notice that the relations \cref{4.15,4.16} are matrix-form recurrences of the $k$-step (upper) Lanczos bidiagonalization of $\bar{Q}_A=Q_A+E_k$. Thus the subspace $\mathrm{span}(V_k)$ is the Krylov subspace $\mathcal{K}_k(\bar{Q}_{A}^{T}\bar{Q}_{A}, v_1)$; see e.g. \cite[\S 10.4.1]{Golub2013}. Therefore, the singular values of $B_k$ will approximate the singular values of $\bar{Q}_{A}$ instead of that of $Q_A$. Clearly, the accuracy of approximations to $c_i$ by the SVD of $B_k$ is limited by the value of $\kappa(C)\tau$.

The proof of \Cref{thm4.1} depends on the following two lemmas. The first lemma gives a similar relation as \Cref{lem4.1}, where some quantities are constructed only for aiding subsequent proofs.
\begin{lemma}\label{lem4.2}
	For the $k$-step rJBD with $k<n$, there exist vectors $\check{u}_{k+1},\dots,\check{u}_{n}$ and $\check{v}_{k+2},\dots,\check{v}_{n}$ and nonnegative numbers $\check{\alpha}_{k+1},\dots,\check{\alpha}_{n}$ and  $\check{\beta}_{k+1},\dots,\check{\beta}_{n-1}$ (define $\check{\beta}_{n}:=0$ and $\check{v}_{n+1}:=0$), such that: \\
	$\mathrm{(I).}$ $\check{u}_{i}$ and $\check{v}_{i}$ are of unit-norm, and $\check{V}=(V_{k+1}, \check{v}_{k+2},\dots,\check{v}_{n})$ is orthogonal; \\
	$\mathrm{(II).}$ define for $k\leq l \leq n-1$
	\begin{equation*}
		\widetilde{P}_{l+1}=\widehat{P}_{k}\check{P}_{k+1}\cdots \check{P}_{l+1} , \ \
		\check{P}_{i}=I_{m+n}-\check{p}_{i}\check{p}_{i}^{T}  , \ \
		\check{p}_{i}=\begin{pmatrix}
			-e_{i}^{(n)} \\
			\check{u}_{i}
		\end{pmatrix},
	\end{equation*}
	then there exist $f_{k+1}\in \mathbb{R}^{m+n}$ such that
	\begin{equation}\label{3.14}
		\widetilde{P}_{l+1}\begin{pmatrix}
			\check{\beta}_{l}e_{l}^{(l)} \\
			\check{\alpha}_{l+1}e_{1}^{(s)}
		\end{pmatrix} =
		\begin{pmatrix}
			0_{n} \\
			Q_A\check{v}_{l+1}
		\end{pmatrix} + f_{k+1}, \ \  \|f_{k+1}\|=\mathcal{O}(k\kappa(C)\tau),
	\end{equation}
	where $\check{v}_{k+1}:=v_{k+1}$ and $\check{\beta}_{k}:=\beta_{k}$.
\end{lemma}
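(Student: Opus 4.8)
The plan is to prove the two parts by construction, treating (I) as an orthogonal-completion step and (II) as an exact continuation of the augmented Householder reduction underlying \Cref{lem4.1}, arranged so that the only error carried past step $k$ is the one already present in the first $k$ reflections. The key structural observation I would exploit throughout is that $P_i=I_{m+n}-p_ip_i^{T}$ is an orthogonal reflection for \emph{any} unit vector $u_i$, since $\|p_i\|^2=1+1=2$ identically, regardless of whether $u_1,\dots,u_k$ are mutually orthogonal; this is what lets the non-orthogonality of the rJBD left vectors stay confined to small inner products rather than leaking into the backward error.

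For (I) I would note that $V_{k+1}$ has $k+1\le n$ orthonormal columns and can therefore be completed to an orthogonal $\check V=(V_{k+1},\check v_{k+2},\dots,\check v_n)\in\mathbb R^{n\times n}$. Crucially, rather than choosing the trailing columns freely, I would let them be produced by continuing the bidiagonal reduction of $Q_A$ on the trailing block, with right transformations acting only on columns $k+2,\dots,n$ so that $v_1,\dots,v_{k+1}$ remain fixed while $\check v_{k+2},\dots,\check v_n$ emerge orthonormal. This dual role is exactly what makes (I) and (II) compatible: the same continuation that orthogonally completes $V_{k+1}$ simultaneously generates the nonnegative scalars $\check\alpha_{l+1},\check\beta_l$ and the unit vectors $\check u_{l+1}$ required in (II).

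For (II) I would argue by induction on $l$ from $k$ to $n-1$, with base data $\check v_{k+1}:=v_{k+1}$, $\check\beta_k:=\beta_k$, $\widetilde P_k:=\widehat P_k$. The engine is the computation already used in \Cref{lem4.1}: applying an augmented reflection to a vector of the form $(0_n^{T},w^{T})^{T}$ returns $(0_n^{T},w^{T})^{T}-(u_i^{T}w)\,p_i$, and when $w=Q_A\check v_{l+1}$ with $\check v_{l+1}$ orthogonal to $v_1,\dots,v_{k+1}$, relation \cref{3.15} forces $u_i^{T}Q_A\check v_{l+1}=\check v_{l+1}^{T}g_i$ for $i\le k$, because every $\alpha_i v_i$, $\beta_i v_{i+1}$ and $\xi_{ji}v_j$ term is annihilated. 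Hence $\widehat P_k$ leaves $(0_n^{T},(Q_A\check v_{l+1})^{T})^{T}$ invariant up to the accumulated term $-\sum_{i=1}^{k}(P_1\cdots P_{i-1})(\check v_{l+1}^{T}g_i)p_i$, whereas the continuation reflections $\check P_{k+1},\dots,\check P_{l+1}$, built in exact arithmetic, contribute no perturbation. Defining $\check\alpha_{l+1}$ and $\check u_{l+1}$ as the norm and direction that the Householder step must generate to annihilate the tail then yields \cref{3.14} with $f_{k+1}$ equal to precisely this accumulated term.

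The bound is then routine: each $\|g_i\|=\|\tilde g_i\|=\mathcal O(\kappa(C)\tau)$ by \cref{bound_g}, each $\|p_i\|=\sqrt2$, and every $P_i$ is an orthogonal reflection, so summing the $k$ contributions and discarding higher-order terms in $\tau$ gives $\|f_{k+1}\|=\mathcal O(k\kappa(C)\tau)$. The step I expect to be the main obstacle is the construction in (II) itself: verifying that the trailing reduction can be carried out so that it both completes $V_{k+1}$ orthogonally \emph{and} produces genuine unit-norm $\check u_i$ with nonnegative $\check\alpha_i,\check\beta_i$ obeying \cref{3.14}, while ensuring that the pre-existing non-orthogonality of $u_1,\dots,u_k$ does not inflate the error. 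The resolution I would lean on is the reflection identity above, which shows that the rJBD imperfection enters only through the small quantities $\check v_{l+1}^{T}g_i$, so the error stays frozen at the order $\mathcal O(k\kappa(C)\tau)$ reached at step $k$ and never grows during the continuation.
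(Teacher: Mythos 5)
Your proposal is correct and follows essentially the same route as the paper: you continue the bidiagonal reduction of $Q_A$ in exact arithmetic with full reorthogonalization to obtain the trailing $\check{v}_i$, $\check{u}_i$ (which simultaneously completes $V_{k+1}$ to an orthogonal $\check{V}$), observe via \cref{3.15} that $u_i^{T}Q_A\check{v}_{l+1}=\check{v}_{l+1}^{T}g_i$ for $i\le k$ so that only the first $k$ reflections contribute $-(P_1\cdots P_{i-1})(\check{v}_{l+1}^{T}g_i)p_i$ while the continuation reflections are exact, and bound the sum using $\|p_i\|=\sqrt{2}$ and \cref{bound_g}. The only cosmetic difference is that you phrase the verification of \cref{3.14} as an induction on $l$ while the paper computes each column directly, and the base case $l=k$ needs the slightly different cancellation from \Cref{lem4.1} (since $\check{v}_{k+1}=v_{k+1}$ is not orthogonal to $v_{k+1}$), which the paper also only sketches.
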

\begin{proof}
	First we construct vectors $\check{u}_{k+1},\dots,\check{u}_{n}$ and $\check{v}_{k+2},\dots,\check{v}_{n}$. For $i\geq k+1$, vectors $\check{u}_{i}$ and $\check{v}_{i+1}$ are generated as
	\begin{align*}
		& \check{\alpha}_{i}\check{u}_{i} =Q_A\check{v}_{i}-\check{\beta}_{i-1}\check{u}_{i-1}, \\
		& r_i = Q_{A}^{T}\check{u}_{i}-\check{\alpha}_{i}v_{i} \ \ \ 
		\check{\beta}_{i}\check{v}_{i+1} = r_i - \sum\limits_{j=1}^{k+1}(v_{j}^{T}r_i)v_j-\sum\limits_{j=k+2}^{i}(\check{v}_{j}^{T}r_i)\check{v}_j ,
	\end{align*}
	such that $\|\check{u}_{i}\|=\|\check{v}_{i+1}\|=1$, where for $i=k+1$ we let $\check{u}_{k}=u_{k}$, $\check{v}_{k+1}=v_{k+1}$ and $\check{\beta}_{k}=\beta_{k}$. If the procedure terminates at some step, it can be continued by choosing a new starting vector. Note that $\check{v}_{i+1}$ are generated with full reorthogonalization. Thus $\check{\beta}_{n}\check{v}_{n+1}=0_n$ and $\check{V}$ is orthogonal.
	
	For $l\geq k+1$, by using similar calculations as the proof of \cref{lem4.1}, we have
	\begin{align*}
		& \check{P}_l\check{P}_{l+1}\begin{pmatrix}
			\check{\beta}_{l}e_{l}^{(l)} \\
			\check{\alpha}_{l+1}e_{1}^{(s)}
		\end{pmatrix} = \begin{pmatrix}
			0_{n} \\
			Q_{A}\check{v}_{l+1}
		\end{pmatrix} , \\
		& \check{P}_i\begin{pmatrix}
		0_{n} \\
		Q_{A}\check{v}_{l+1}
		\end{pmatrix} =\begin{pmatrix}
		0_{n} \\
		Q_{A}\check{v}_{l+1}
		\end{pmatrix} , \ \ i=k+1,\dots,l-1, \\
		& P_i\begin{pmatrix}
		0_{n} \\
		Q_{A}\check{v}_{l+1}
		\end{pmatrix} =\begin{pmatrix}
		0_{n} \\
		Q_{A}\check{v}_{l+1}
		\end{pmatrix} -(\check{v}_{l+1}^{T}g_i)p_i, \ \ i=1,\dots, k .
	\end{align*}
	Thus we obtain
	\begin{equation*}
		\widehat{P}_{k}\check{P}_{k+1}\cdots\check{P}_{l+1}\begin{pmatrix}
			\beta_{l}e_{l}^{(l)} \\
			\alpha_{l+1}e_{1}^{(s)}
		\end{pmatrix}
		= P_{1}\cdots P_{k}\begin{pmatrix}
			0_{n} \\
			Q_{A}v_{l+1}
		\end{pmatrix} 
		= \begin{pmatrix}
			0_{n} \\
			Q_{A}v_{l+1}
		\end{pmatrix} + f_{k+1} 
	\end{equation*}
	with $f_{k+1} = -\sum_{i=1}^{k}(P_{1}\cdots P_{i-1})(\check{v}_{l+1}^{T}g_{i})p_{i}$ and $\|f_{k+1}\|=\mathcal{O}(k\kappa(C)\tau)$.

	For $l=k$, it can also be verified that \cref{3.14} holds and we omit the similar calculations. 
\end{proof}

\Cref{lem4.1,lem4.2} lead to the following result.
\begin{lemma}\label{lem4.3}
	For the $k$-step rJBD process,
	define the upper bidiagonal matrix as
	\[\check{B}=
	\begin{pmatrix}
		\alpha_{1} & \beta_1 & & & & & \\
		& \alpha_{2} & \ddots & & & & \\
		&  & \ddots & \beta_{k-1} & & & \\
		& & & \alpha_{k} & \beta_k & & \\
		& & &  & \check{\alpha}_{k+1} & \ddots & \\
		& & & & & \ddots & \check{\beta}_{n-1} \\
		& & & & & & \check{\alpha}_{n}
	\end{pmatrix} \in\mathbb{R}^{n\times n}.\]
	If follows that
	\begin{equation}\label{Bn}
		\begin{pmatrix}
			0_{n\times n} \\
			Q_A\check{V}_n
		\end{pmatrix} + F_k =
		\widetilde{P}_n\begin{pmatrix}
			\check{B} \\
			0_{m\times n}
		\end{pmatrix}
	\end{equation}
	with $F_k=(\underbrace{0_{m+n},f_2,\dots,f_{k}}_{k},\underbrace{f_{k+1},\dots,f_{k+1}}_{n-k})\in\mathbb{R}^{(m+n)\times n}$, where $f_{n+1}:=0$ for $k=n$.
	%$F_k=(0_{m+n},f_2,\dots,f_{k+1},\dots,f_{k+1})\in\mathbb{R}^{(m+n)\times n}$.
\end{lemma}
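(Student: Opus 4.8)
The plan is to verify the matrix identity \cref{Bn} one column at a time. Writing $\check{b}_j=\check{B}e_j^{(n)}$ for the $j$-th column of $\check{B}$, the $j$-th column of the right-hand side of \cref{Bn} is $\widetilde{P}_n\begin{pmatrix}\check{b}_j\\0_m\end{pmatrix}$, and I would match it against the $j$-th column of $\begin{pmatrix}0_{n\times n}\\Q_A\check{V}_n\end{pmatrix}+F_k$. The crucial structural fact is that, because $\check{B}$ is upper bidiagonal, $\begin{pmatrix}\check{b}_j\\0_m\end{pmatrix}\in\mathbb{R}^{m+n}$ has nonzero entries only in positions $j-1$ and $j$ (only position $1$ when $j=1$), all within its first $n$ coordinates, while its bottom block vanishes.

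From the definitions $P_i=I_{m+n}-p_ip_i^T$ and $\check{P}_i=I_{m+n}-\check{p}_i\check{p}_i^T$, a factor with index $i$ acts on a vector $\begin{pmatrix}w\\y\end{pmatrix}$ only through the scalar $-w(i)+u_i^Ty$ (resp.\ $-w(i)+\check{u}_i^Ty$). Applied to $\begin{pmatrix}\check{b}_j\\0_m\end{pmatrix}$ this scalar equals $-\check{b}_j(i)$, which is zero whenever $i\geq j+1$. Hence every Householder factor in $\widetilde{P}_n=P_1\cdots P_k\check{P}_{k+1}\cdots\check{P}_n$ with index exceeding $j$ leaves $\begin{pmatrix}\check{b}_j\\0_m\end{pmatrix}$ unchanged, so $\widetilde{P}_n$ collapses to the partial product $\widehat{P}_j=P_1\cdots P_j$ when $j\leq k$, and to $\widetilde{P}_j=\widehat{P}_k\check{P}_{k+1}\cdots\check{P}_j$ when $j>k$.

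With this reduction the three ranges fall out of the earlier lemmas. For $2\leq j\leq k$, setting $l=j-1$ in \Cref{lem4.1} gives $\widetilde{P}_n\begin{pmatrix}\check{b}_j\\0_m\end{pmatrix}=\widehat{P}_j\begin{pmatrix}\check{b}_j\\0_m\end{pmatrix}=\begin{pmatrix}0_n\\Q_Av_j\end{pmatrix}+f_j$; for $k+1\leq j\leq n$, setting $l=j-1$ in \Cref{lem4.2} gives $\begin{pmatrix}0_n\\Q_A\check{v}_j\end{pmatrix}+f_{k+1}$, with the \emph{same} $f_{k+1}$ for every such $j$. The base case $j=1$ I would treat directly: only $P_1$ acts nontrivially, and a one-line computation using $\alpha_1u_1=Q_Av_1$ yields $\begin{pmatrix}0_n\\Q_Av_1\end{pmatrix}$ with no perturbation. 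Stacking these columns and using $\check{v}_{k+1}=v_{k+1}$ assembles the left block into $\begin{pmatrix}0_{n\times n}\\Q_A\check{V}_n\end{pmatrix}$ and reads off $F_k=(0_{m+n},f_2,\dots,f_k,f_{k+1},\dots,f_{k+1})$ exactly as stated.

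The only real care needed is the bookkeeping that makes the trailing Householder factors act as the identity, namely tracking that the two-entry sparsity pattern of each $\check{b}_j$ is annihilated by every $p_i$ and $\check{p}_i$ with $i>j$, together with the observation that \Cref{lem4.2} produces a single common perturbation $f_{k+1}$, which is precisely what forces the repeated-column structure in the trailing block of $F_k$. No new estimates are required: the bounds $\|f_{l+1}\|=\mathcal{O}(l\kappa(C)\tau)$ supplied by \Cref{lem4.1,lem4.2} carry over to $F_k$ directly.
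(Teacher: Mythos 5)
Your proposal is correct and follows essentially the same route as the paper: a column-by-column verification in which the upper-bidiagonal sparsity of $\check{B}$ makes every Householder factor of index exceeding $j$ act as the identity on the $j$-th column (so $\widetilde{P}_n$ collapses to $\widehat{P}_j$ or $\widetilde{P}_j$), after which \Cref{lem4.1,lem4.2} and a direct computation for $j=1$ supply exactly the columns of $F_k$. The only difference is that you make the ``trailing factors act trivially'' step explicit via the scalar $-w(i)+u_i^Ty$, which the paper leaves implicit when it drops $P_{l+2}\cdots\check{P}_n$.
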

\begin{proof}
	The proof can be completed by comparing each column of the right- and left-hand sides of \cref{Bn}.
	
	For the first column, we have
	\begin{align*}
		\widetilde{P}_n\begin{pmatrix}
			\alpha_{1} \\ 0_{m+n-1}
		\end{pmatrix}
	&= P_1\Big[P_2\cdots P_k\check{P}_{k+1}\cdots\check{P}_n \begin{pmatrix}
		\alpha_{1} \\ 0_{m+n-1}
	\end{pmatrix}\Big] \\
	&= P_1\begin{pmatrix}
		\alpha_{1} \\ 0_{m+n-1}
	\end{pmatrix}=\begin{pmatrix}
	\alpha_{1} \\ 0_{m+n-1}
	\end{pmatrix} + 
	\alpha_1\begin{pmatrix}
	-e_{1}^{(n)} \\ u_1
	\end{pmatrix} \\
	&= \begin{pmatrix}
		0_n \\ \alpha_1u_1
	\end{pmatrix}
	=\begin{pmatrix}
		0_n	\\ Q_Av_1
	\end{pmatrix}.
	\end{align*}
	For the $(l+1)$-th column with $1\leq l\leq k-1$, by \Cref{lem4.1} we have 
	\begin{align*}
		\widetilde{P}_n\begin{pmatrix}
		\beta_{l}e_{l}^{(l)} \\
		\alpha_{l+1}e_{1}^{(s)}
	\end{pmatrix}
	&= \widehat{P}_{l+1}\Big[P_{l+2}\cdots\check{P}_n
	\begin{pmatrix}
		\beta_{l}e_{l}^{(l)} \\
		\alpha_{l+1}e_{1}^{(s)}
	\end{pmatrix}\Big] \\
	&= \widehat{P}_{l+1}\begin{pmatrix}
		\beta_{l}e_{l}^{(l)} \\
		\alpha_{l+1}e_{1}^{(s)}
	\end{pmatrix} 
	= \begin{pmatrix}
		0_{n} \\
		Q_Av_{l+1}
	\end{pmatrix} + f_{l+1}.
	\end{align*}
	Similar calculations can be carried out for the $(l+1)$-th column with $k\leq l\leq n-1$ by using \Cref{lem4.2}. Therefore, the equality \cref{Bn} holds. 	
\end{proof}

With aids of the above two lemmas, we can now give the proof of \Cref{thm4.1}.
\begin{proof}[Proof of \Cref{thm4.1}]
	By \Cref{lem4.3}, we have
	\[ \begin{pmatrix}
		0_{n\times n} \\
		Q_A\check{V}_n
	\end{pmatrix} + F_k =
	\widetilde{P}_n\begin{pmatrix}
		\check{B} \\
		0_{m\times n}
	\end{pmatrix}
	=\begin{pmatrix}
	\widehat{P}_{11} & \widehat{P}_{12}  \\
	\widehat{P}_{21} & \widehat{P}_{22} \\
	\end{pmatrix}
	\begin{pmatrix}
	\check{B} \\
	0_{m\times n}
	\end{pmatrix} =
	\begin{pmatrix}
	\widehat{P}_{11} \\
	\widehat{P}_{21}
	\end{pmatrix}\check{B} ,\]
	where $\widetilde{P}_n$ is partitioned as
	\[\widetilde{P}_{n} =
	\begin{blockarray}{ccc}
	n & m & \\
	\begin{block}{(cc)c}
	\widehat{P}_{11} & \widehat{P}_{12} & n \\
	\widehat{P}_{21} & \widehat{P}_{22} & m \\
	\end{block}
	\end{blockarray}. \]
	By \cite[Theorem 4.1]{Paige2009}, there exist a column orthonormal matrix 
	$\bar{U}_{n}\in \mathbb{R}^{m\times n}$ and a matrix $M\in \mathbb{R}^{m\times n}$ satisfying $0.5\leq \|M\|\leq 1$ such that
	\[ Q_{A}\check{V}_{n} + \widehat{E}_k = \bar{U}_{n}\check{B}_{n} ,\]
	where
	$\widehat{E}_k =\begin{pmatrix}
	M\widehat{P}_{11}^{T}, & I_{m}
	\end{pmatrix}F_k$. Therefore, we have
	\[ (Q_{A}+E_k)\check{V}_{n} = \bar{U}_{n}\check{B}_{n} \]
	with
	$E_k = \begin{pmatrix}
	M\widehat{P}_{11}^{T}, & I_{m}
	\end{pmatrix}F_{k}\check{V}_{n}^{T}$, which can be also written 
	as 
	\[(Q_A+E_k)^T\bar{U}_n = \check{V}_n\check{B}_{n}^{T} .\]
	By equating the first $k$ columns of the above two equalities, respectively, we obtain \eqref{4.15} and \eqref{4.16}.
	Finally, we have the upper bound
	\[\|E_k\|\leq\Big\|\begin{pmatrix}
	M\widehat{P}_{11}^{T}, & I_{m}
	\end{pmatrix}\Big\|\|F_{k}\|\leq \sqrt{2}\|F_{k}\|=\mathcal{O}(\sqrt{n}k\kappa(C)\tau) ,\]
	where we have used $\|F_{k}\|\leq\|F_k\|_{F}\leq \sqrt{n}\max_{1\leq l\leq k}\|f_{l+1}\|$.
\end{proof}

%%-----------------------------------
\subsection{Bidiagonal reduction of $Q_L$}
For the rJBD process, a relation that is similar to \cref{B_Bh} holds, and the process of generating $\widehat{B}_{k}$ is closely related to the Lanczos bidiagonalization of $Q_{L}$. 

\begin{theorem}\label{thm4.3}
	For the $k$-step rJBD process, we have
	\begin{equation}\label{3.18}
		B_{k}^{T}B_{k}+\bar{B}_{k}^{T}\bar{B}_k=I_{k} + H_{k},
	\end{equation}
	where $H_{k}$ is a diagonal matrix, and the $i$-th diagonal of $H_{k}$ is of order $\mathcal{O}(\theta_{i-1}\kappa(C)\tau)$ with $\theta_{i}=\sum_{j=0}^{i-1}(\hat{\beta}_{i}\cdots\hat{\beta}_{i-j})/(\hat{\alpha}_{i}\cdots\hat{\alpha}_{i-j})$ for $i\geq 1$ and $\theta_0=0$.
\end{theorem}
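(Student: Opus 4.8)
The plan is to read off the diagonal of $B_{k}^{T}B_{k}+\bar{B}_{k}^{T}\bar{B}_{k}$ directly, exploiting that $\widetilde{V}_{k+1}$ is column orthonormal in the rJBD while $U_{k}$ and $\widehat{U}_{k}$ are not. First I would note that both $B_{k}^{T}B_{k}$ and $\bar{B}_{k}^{T}\bar{B}_{k}$ are tridiagonal, with respective $(i,i+1)$ entries $\alpha_{i}\beta_{i}$ and $-\hat{\alpha}_{i}\hat{\beta}_{i}$, the minus sign coming from $\bar{B}_{k}=\widehat{B}_{k}P$. Since Step~8 of \Cref{alg2} sets $\hat{\beta}_{i}=\alpha_{i}\beta_{i}/\hat{\alpha}_{i}$, the identity $\hat{\alpha}_{i}\hat{\beta}_{i}=\alpha_{i}\beta_{i}$ holds exactly, so the off-diagonals cancel and the sum is diagonal. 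This shows that $H_{k}$ is diagonal with no error analysis at all; everything else concerns its diagonal entries.

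To treat the diagonal I would write $\tilde{v}_{i}=Qv_{i}$, use $\|\tilde{v}_{i}\|=1$, and split $\tilde{v}_{i}$ into its first $m$ and last $p$ entries. Combining $\alpha_{i}u_{i}=\tilde{v}_{i}(1:m)-\beta_{i-1}u_{i-1}$ and $\hat{\alpha}_{i}\hat{u}_{i}=(-1)^{i-1}\tilde{v}_{i}(m+1:m+p)-\hat{\beta}_{i-1}\hat{u}_{i-1}$ with $\|u_{i}\|=\|\hat{u}_{i}\|=1$ and expanding $1=\|\tilde{v}_{i}\|^{2}$ gives
\begin{equation*}
(B_{k}^{T}B_{k}+\bar{B}_{k}^{T}\bar{B}_{k})_{ii}=\alpha_{i}^{2}+\beta_{i-1}^{2}+\hat{\alpha}_{i}^{2}+\hat{\beta}_{i-1}^{2}=1-2\alpha_{i}\beta_{i-1}\,u_{i-1}^{T}u_{i}-2\hat{\alpha}_{i}\hat{\beta}_{i-1}\,\hat{u}_{i-1}^{T}\hat{u}_{i}.
\end{equation*}
Thus the $i$-th diagonal of $H_{k}$ equals $-2\alpha_{i}\beta_{i-1}(u_{i-1}^{T}u_{i})-2\hat{\alpha}_{i}\hat{\beta}_{i-1}(\hat{u}_{i-1}^{T}\hat{u}_{i})$, and the task reduces to bounding the two consecutive inner products, both of which vanish in exact arithmetic.

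The $A$-side product is immediate: premultiplying the $i$-th column of \cref{3.10} by $u_{i-1}^{T}$ and substituting \cref{3.15} for $Q_{A}^{T}u_{i-1}$, the column orthonormality of $V_{k}$ annihilates every bidiagonal and reorthogonalization term and leaves $\alpha_{i}\,u_{i-1}^{T}u_{i}=v_{i}^{T}g_{i-1}$; since $\|g_{i-1}\|=\|\tilde{g}_{i-1}\|\le 3\kappa(C)\tau+\mathcal{O}(\kappa(C)^{2}\tau^{2})$ by \Cref{thm3.1}, this contributes $\mathcal{O}(\kappa(C)\tau)$. The $\widehat{U}_{k}$ side is the heart of the matter, because $\hat{u}_{i}$ is never reorthogonalized. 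Writing $w_{i}=Q_{L}\hat{v}_{i}$, so that $\hat{\alpha}_{i}\hat{u}_{i}=w_{i}-\hat{\beta}_{i-1}\hat{u}_{i-1}$, I would evaluate the Gram entries $w_{i}^{T}w_{j}$ by using $Q_{L}^{T}Q_{L}=I_{n}-Q_{A}^{T}Q_{A}$ (from $Q^{T}Q=I_{n}$) and $Q_{A}v_{i}=\alpha_{i}u_{i}+\beta_{i-1}u_{i-1}$. With $\hat{\alpha}_{i}\hat{\beta}_{i}=\alpha_{i}\beta_{i}$ and the fact that every off-diagonal product $u_{a}^{T}u_{b}$ is $\mathcal{O}(\kappa(C)\tau)$ (by the rJBD analogue of the recursion in \Cref{thm3.2}), this produces $w_{i}^{T}w_{i-1}=\hat{\alpha}_{i-1}\hat{\beta}_{i-1}+\mathcal{O}(\kappa(C)\tau)$ and $w_{i}^{T}w_{j}=\mathcal{O}(\kappa(C)\tau)$ for $j<i-1$.

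Finally I would convert $\hat{\alpha}_{i}\hat{u}_{i}=w_{i}-\hat{\beta}_{i-1}\hat{u}_{i-1}$ into the first-order recursion $\hat{\alpha}_{j}\rho_{i,j}=w_{i}^{T}w_{j}-\hat{\beta}_{j-1}\rho_{i,j-1}$ for $\rho_{i,j}:=w_{i}^{T}\hat{u}_{j}$ and solve it explicitly as
\begin{equation*}
w_{i}^{T}\hat{u}_{i-1}=\sum_{l=1}^{i-1}\frac{w_{i}^{T}w_{l}}{\hat{\alpha}_{l}}\prod_{m=l+1}^{i-1}\Big(-\frac{\hat{\beta}_{m-1}}{\hat{\alpha}_{m}}\Big).
\end{equation*}
Since $\hat{\alpha}_{i}(\hat{u}_{i-1}^{T}\hat{u}_{i})=w_{i}^{T}\hat{u}_{i-1}-\hat{\beta}_{i-1}$, the $l=i-1$ term cancels $\hat{\beta}_{i-1}$ up to $\mathcal{O}(\kappa(C)\tau)/\hat{\alpha}_{i-1}$, and multiplying the whole expression by $\hat{\beta}_{i-1}$ rewrites each surviving contribution to $2\hat{\alpha}_{i}\hat{\beta}_{i-1}(\hat{u}_{i-1}^{T}\hat{u}_{i})$ as $\big(\prod_{m=l}^{i-1}\hat{\beta}_{m}/\prod_{m=l}^{i-1}\hat{\alpha}_{m}\big)\mathcal{O}(\kappa(C)\tau)$, $l=1,\dots,i-1$. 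Reindexing by $j=i-1-l$ identifies the sum of these factors with $\theta_{i-1}$, and the case $i=1$ is trivial since $\beta_{0}=\hat{\beta}_{0}=0$ and $\theta_{0}=0$. I expect the main obstacle to be exactly this last bookkeeping step: tracking the telescoping products of the ratios $\hat{\beta}_{m}/\hat{\alpha}_{m}$ — which is precisely where the absence of reorthogonalization of $\hat{u}_{i}$ amplifies the $\mathcal{O}(\kappa(C)\tau)$ perturbations — and confirming that the unrolled recursion reproduces the sum defining $\theta_{i-1}$ while the $A$-side $\mathcal{O}(\kappa(C)\tau)$ term is absorbed into the same order.
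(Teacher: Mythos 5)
Your argument follows the same skeleton as the paper's proof: diagonality of $H_k$ from the exact identity $\hat{\alpha}_i\hat{\beta}_i=\alpha_i\beta_i$, the diagonal entries from expanding $1=\|\tilde{v}_i\|^2$ into the two cross terms $2\alpha_i\beta_{i-1}u_{i-1}^{T}u_i$ and $2\hat{\alpha}_i\hat{\beta}_{i-1}\hat{u}_{i-1}^{T}\hat{u}_i$, and the $Q_A$-side cross term from $\alpha_iu_{i-1}^{T}u_i=v_i^{T}g_{i-1}$. For the $Q_L$ side, the paper proves an inductive vector identity (\Cref{lem4.5}) for $\hat{\beta}_iQ_L^{T}\hat{u}_i$ whose error term satisfies $q_i=\hat{\beta}_i/\hat{\alpha}_i[\cdots+q_{i-1}]$ and therefore accumulates exactly $\theta_i$; your explicit unrolling of the scalar recursion $\hat{\alpha}_j\rho_{i,j}=w_i^{T}w_j-\hat{\beta}_{j-1}\rho_{i,j-1}$ is the same mechanism written in closed form, and your reindexing correctly reproduces $\theta_{i-1}$. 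So the route is essentially the paper's, with the induction replaced by an explicit solution of the first-order recurrence.

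The one genuine gap is the input to that recurrence: you bound $w_i^{T}w_j$ for $j<i-1$ by asserting that every off-diagonal product $u_a^{T}u_b$ is $\mathcal{O}(\kappa(C)\tau)$ ``by the rJBD analogue of \Cref{thm3.2}''. That theorem concerns the non-reorthogonalized JBD, and the rJBD analogue does not give a clean $\mathcal{O}(\kappa(C)\tau)$ for non-consecutive indices: from $\alpha_iu_j^{T}u_i=v_i^{T}g_j-\beta_{i-1}u_j^{T}u_{i-1}$ for $j<i-1$, the bound on $u_j^{T}u_i$ accumulates products of the ratios $\beta_m/\alpha_m$, which would contaminate the final estimate with factors not present in $\theta_{i-1}$. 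The paper sidesteps this entirely (it only ever needs the consecutive product $u_{i-1}^{T}u_i$), and you can too: write $w_i^{T}w_l=-\hat{v}_i^{T}Q_A^{T}Q_A\hat{v}_l=\pm(Q_Av_i)^{T}(\alpha_lu_l+\beta_{l-1}u_{l-1})$ and evaluate $(Q_Av_i)^{T}u_l=v_i^{T}Q_A^{T}u_l$ by expanding $Q_A^{T}u_l$ in the orthonormal basis $V$ via \cref{3.15}. For $l<i-1$ only $\alpha_lv_i^{T}g_l+\beta_{l-1}v_i^{T}g_{l-1}$ survives, which is $\mathcal{O}(\kappa(C)\tau)$ by \cref{alp_beta}, and for $l=i-1$ you obtain $\alpha_{i-1}\beta_{i-1}+\mathcal{O}(\kappa(C)\tau)=\hat{\alpha}_{i-1}\hat{\beta}_{i-1}+\mathcal{O}(\kappa(C)\tau)$ as you claimed. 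With that repair your argument goes through and matches the stated bound.
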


The growth speed of $\theta_i$ is moderate under the assumption that $\hat{\beta}_i/\hat{\alpha}_i$ is a moderate value for $i=1,\dots,k$, which is reasonable since if the rJBD process does not numerically terminate, then both $\hat{\beta}_i$ and $\hat{\alpha}_i$ are moderate values. We use the following example to illustrate the variation of values $\hat{\beta}_k/\hat{\alpha}_k$ and $\theta_k$ with respect to $k$. The property of the test matrix pair $\{\sf illc1850, well1850\}$ is shown in \cref{tab1}.

\begin{figure}[htbp]
	\centering
	\subfloat{\includegraphics[width=0.42\textwidth]{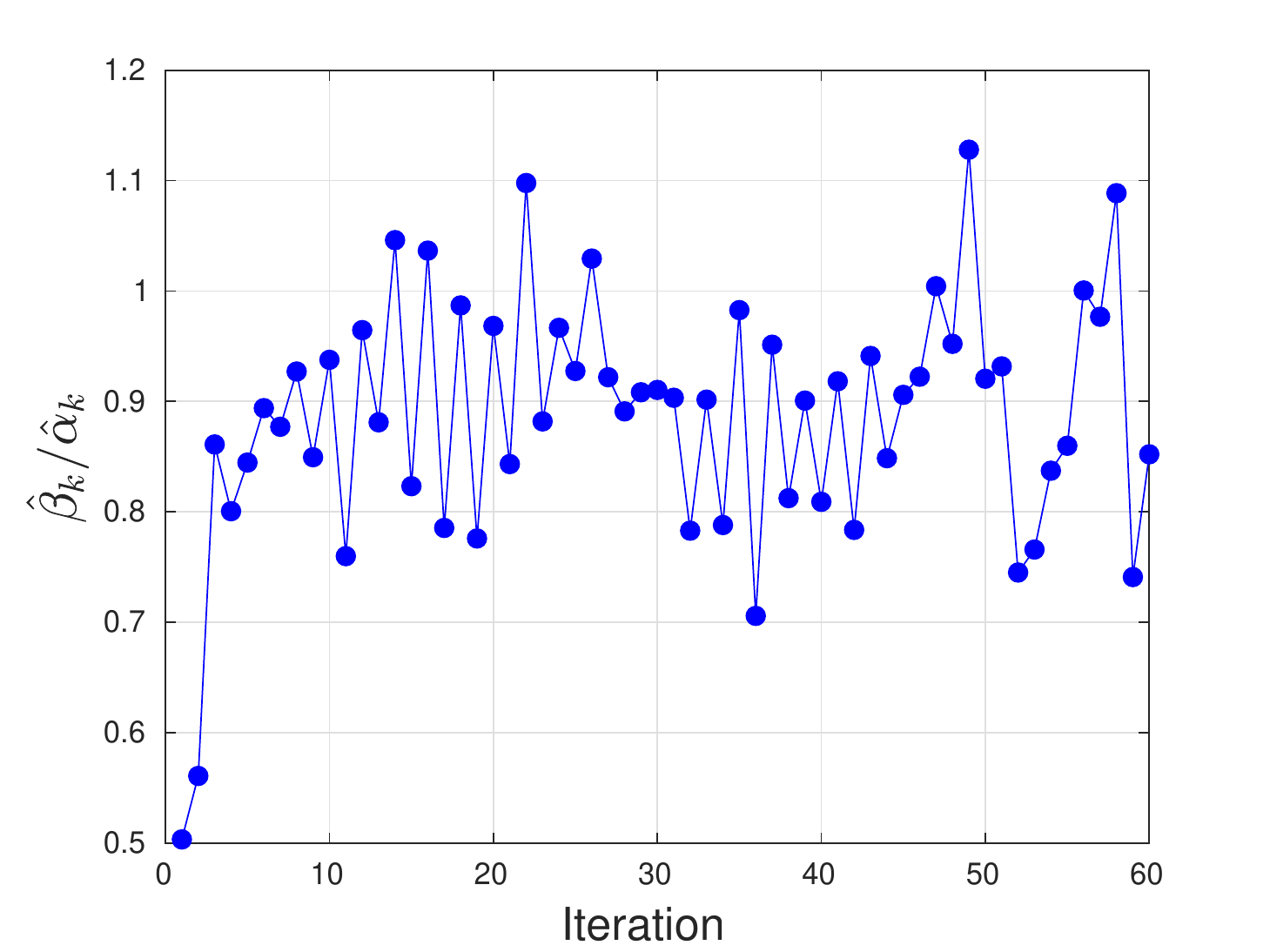}}
	\subfloat{\includegraphics[width=0.42\textwidth]{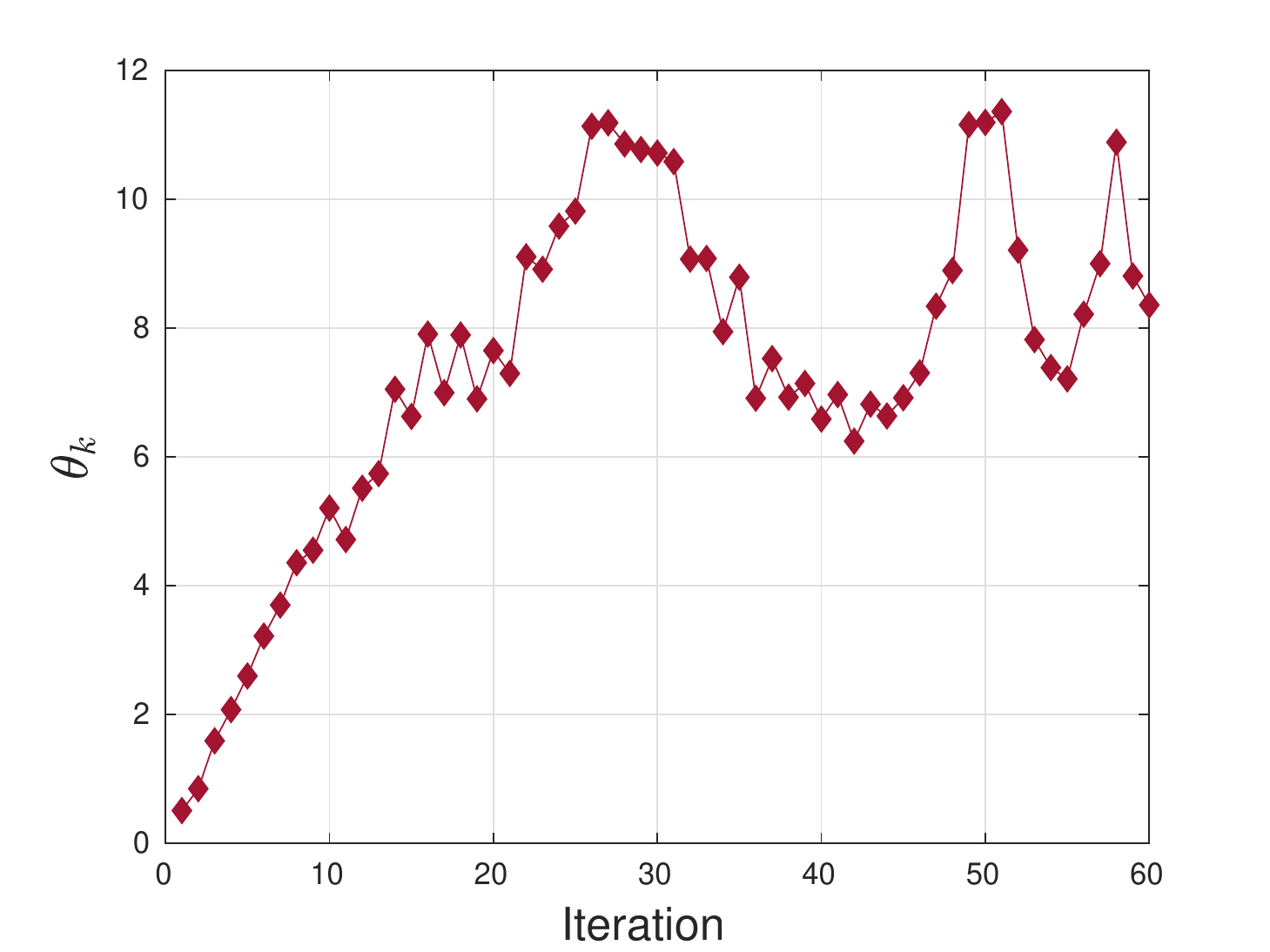}}
	\caption{Variation of values $\hat{\beta}_k/\hat{\alpha}_k$ and $\theta_k$ for rJBD of \{\sf illc1850, well1850\}, $\tau=10^{-12}$.}
	\label{fig0}
\end{figure}

Note that the only difference between  relations \cref{3.18} and \cref{B_Bh} is the perturbation term $H_k$, which comes from the inaccurate inner iteration. From \Cref{thm4.3}, we know that the singular values of $\bar{B}_{k}$ are determined by that of $B_k$ within errors of order $\mathcal{O}(\kappa(C)\tau)$, where we omit the moderate value $\max_{1\leq i\leq k-1}\theta_i$. This ensures that $\bar{B}_{k}$ can also be used to approximate generalized singular values of $\{A,L\}$.

To prove this result, we need the following lemma.
\begin{lemma}\label{lem4.5}
	For any $i\geq 1$, we have
	\begin{equation*}
		\hat{\beta}_iQ_{L}^{T}\hat{u}_i = \hat{\beta}_{i}^{2}\hat{v}_{i+1}+\widehat{V}_id_i+q_i
	\end{equation*}
	with a vector $d_i\in\mathbb{R}^{i}$, and $\|q_i\|=\mathcal{O}(\theta_i\kappa(C)\tau)$.
\end{lemma}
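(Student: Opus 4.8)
The plan is to prove the identity by induction on $i$, exploiting two facts that are available for the rJBD process: the cross relation coming from the column orthonormality of $Q$, and the coupled three‑term recurrences for the vectors $u_i$ and $\hat u_i$. Reading off the $i$-th columns of \cref{3.10}, \cref{3.11} (equivalently \cref{3.15}) and \cref{3.12}, I would record
\[
Q_A v_i=\alpha_i u_i+\beta_{i-1}u_{i-1},\qquad
Q_A^{T}u_i=\alpha_i v_i+\beta_i v_{i+1}+\sum_{j=1}^{i}\xi_{ji}v_j+g_i,\qquad
Q_L\hat v_i=\hat\alpha_i\hat u_i+\hat\beta_{i-1}\hat u_{i-1}.
\]
The engine of the proof is the recurrence obtained by premultiplying the last identity by $Q_L^{T}$, namely
\[
\hat\alpha_i\,Q_L^{T}\hat u_i=Q_L^{T}Q_L\hat v_i-\hat\beta_{i-1}\,Q_L^{T}\hat u_{i-1},
\]
so that a bound on $Q_L^{T}\hat u_i$ follows from a bound on $Q_L^{T}\hat u_{i-1}$ once $Q_L^{T}Q_L\hat v_i$ is understood.

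The key intermediate computation is to analyze $Q_L^{T}Q_L\hat v_i$. Since $Q$ is column orthonormal, $Q_A^{T}Q_A+Q_L^{T}Q_L=I_n$, hence $Q_L^{T}Q_L\hat v_i=\hat v_i-Q_A^{T}Q_A\hat v_i$. Using $\hat v_i=(-1)^{i-1}v_i$ together with the first two recurrences above, I would expand
\[
Q_A^{T}Q_A v_i=\alpha_i Q_A^{T}u_i+\beta_{i-1}Q_A^{T}u_{i-1}
=\alpha_i\beta_i v_{i+1}+(\text{vector in }\mathrm{span}(v_1,\dots,v_i))+\alpha_i g_i+\beta_{i-1}g_{i-1}.
\]
Restoring the signs via $\hat v_{i+1}=(-1)^{i}v_{i+1}$ and invoking $\hat\alpha_i\hat\beta_i=\alpha_i\beta_i$, the leading term becomes exactly $\hat\alpha_i\hat\beta_i\hat v_{i+1}$; that is,
\[
Q_L^{T}Q_L\hat v_i=\hat\alpha_i\hat\beta_i\,\hat v_{i+1}+\widehat V_i\tilde d_i+w_i,\qquad
w_i=-(-1)^{i-1}(\alpha_i g_i+\beta_{i-1}g_{i-1}),
\]
where $\widehat V_i\tilde d_i$ collects the $\mathrm{span}(\hat v_1,\dots,\hat v_i)$ part and, using $\|g_i\|=\|\tilde g_i\|$ bounded by \cref{bound_g} together with $\alpha_i,\beta_{i-1}=\mathcal{O}(1)$, one gets $\|w_i\|=\mathcal{O}(\kappa(C)\tau)$.

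Substituting this into the recurrence and multiplying through by $\hat\beta_i/\hat\alpha_i$ gives
\[
\hat\beta_i\,Q_L^{T}\hat u_i=\hat\beta_i^{2}\hat v_{i+1}
+\frac{\hat\beta_i}{\hat\alpha_i}\widehat V_i\tilde d_i
+\frac{\hat\beta_i}{\hat\alpha_i}w_i
-\frac{\hat\beta_i}{\hat\alpha_i}\,\hat\beta_{i-1}Q_L^{T}\hat u_{i-1}.
\]
Applying the induction hypothesis $\hat\beta_{i-1}Q_L^{T}\hat u_{i-1}=\hat\beta_{i-1}^{2}\hat v_i+\widehat V_{i-1}d_{i-1}+q_{i-1}$, the term $\hat\beta_{i-1}^{2}\hat v_i$ and $\widehat V_{i-1}d_{i-1}$ are absorbed into a single $\widehat V_i d_i$ (since $\mathrm{span}(\hat v_1,\dots,\hat v_{i-1})\subset\mathrm{span}(\hat v_1,\dots,\hat v_i)$), leaving the error recursion $q_i=\tfrac{\hat\beta_i}{\hat\alpha_i}(w_i-q_{i-1})$. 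Writing $\rho_i:=\hat\beta_i/\hat\alpha_i$, the defining sum for $\theta_i$ satisfies $\theta_i=\rho_i(1+\theta_{i-1})$ with $\theta_0=0$, which matches the recursion $\|q_i\|\le\rho_i(\|w_i\|+\|q_{i-1}\|)$ exactly; a clean induction then yields $\|q_i\|=\mathcal{O}(\theta_i\kappa(C)\tau)$, with the base case $i=1$ handled by $\hat\beta_0=0$ (so $\theta_1=\rho_1$ and $\|q_1\|=\rho_1\|w_1\|$). The main obstacle I anticipate is bookkeeping: keeping careful track of which contributions genuinely lie in $\mathrm{span}(\hat v_1,\dots,\hat v_i)$ versus the genuine error $q_i$, correctly handling the alternating signs from $P=\operatorname{diag}(1,-1,\dots)$, and verifying that the accumulation of the factors $\hat\beta_i/\hat\alpha_i$ reproduces precisely the recurrence $\theta_i=\rho_i(1+\theta_{i-1})$ rather than some looser geometric bound.
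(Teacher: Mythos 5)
Your proposal is correct and follows essentially the same route as the paper: induction on $i$ driven by the identity $Q_A^{T}Q_A+Q_L^{T}Q_L=I_n$, expansion of $Q_A^{T}Q_A v_i$ via the coupled recurrences, and the error recursion $\|q_i\|\le(\hat\beta_i/\hat\alpha_i)(\|w_i\|+\|q_{i-1}\|)$ matched against $\theta_i=(\hat\beta_i/\hat\alpha_i)(1+\theta_{i-1})$. The only step you gloss over is why $\alpha_i,\beta_{i-1}=\mathcal{O}(1)$ when $U_k$ is not column orthonormal; the paper justifies this with the bound $(\alpha_i^{2}+\beta_{i-1}^{2})^{1/2}\le 1+\mathcal{O}(i\kappa(C)\tau)$ drawn from \cref{lem4.1}.
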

\begin{proof}
	The proof can be completed by mathematical induction. For $i=1$, we obtain from $\hat{\alpha}_1\hat{u}_1=Q_L\hat{v}_1$ that
	\begin{align*}
		\hat{\alpha}_1Q_{L}^{T}\hat{u}_1
		&= (I_n-Q_{A}^{T}Q_A)\hat{v}_1
		 = \hat{v}_1 - Q_{A}^{T}(\alpha_{1}u_1) \\
		&= \hat{v}_1-\alpha_{1}(\beta_{1}v_2+\alpha_{1}v_1+\xi_{11}v_1+g_1) \\
		&= \hat{\alpha}_1\hat{\beta}_1\hat{v}_2+(1-\alpha_{1}^{2}-\alpha_{1}\xi_{11})\hat{v}_1-\alpha_{1}g_1,
	\end{align*}
	then we get
	\[\hat{\beta}_1Q_{L}^{T}\hat{u}_1= \hat{\beta}_{1}^{2}\hat{v}_{2}+\widehat{V}_1d_1-\hat{\beta}_1/\hat{\alpha}_1\cdot \alpha_1g_1
	 = \hat{\beta}_{1}^{2}\hat{v}_{1}+\widehat{V}_1d_1 + q_1\]
	 with $d_1=\hat{\beta}_1/\hat{\alpha}_1\cdot (1-\alpha_{1}^{2}-\alpha_{1}\xi_{11})$ and $q_1=-\hat{\beta}_1/\hat{\alpha}_1\cdot \alpha_1g_1$, and $\|q_1\|=\mathcal{O}(\theta_1\kappa(C)\tau)$ since $\alpha_1=\|\tilde{v}_1(1:m)\|\leq 1$. 
	 
	 Suppose the relation is true for indices up to $i-1\geq 1$. For index $i$, we have
	 \begin{align*}
	 	\hat{\alpha}_iQ_{L}^{T}\hat{u}_i
	 	&= Q_{L}^{T}Q_{L}\hat{v}_i-\hat{\beta}_{i-1}Q_{L}^{T}\hat{u}_{i-1} \\
	 	&= (I_n-Q_{A}^{T}Q_{A})\hat{v}_i-\hat{\beta}_{i-1}Q_{L}^{T}\hat{u}_{i-1} \\
	 	&= \hat{v}_i+(-1)^iQ_{A}^{T}(\alpha_iu_i+\beta_{i-1}u_{i-1})-\hat{\beta}_{i-1}Q_{L}^{T}\hat{u}_{i-1}.
 	 \end{align*}
	By \cref{3.15} we have
	 \[(-1)^i\alpha_{i}Q_{A}^{T}u_{i} = \hat{\alpha}_i\hat{\beta}_{i}\hat{v}_{i+1}+(-1)^i\alpha_{i}(\alpha_{i}v_{i}+\sum_{j=1}^{i}\xi_{ji}v_{j}+g_{i}) \]
	 and
	 \[Q_{A}^{T}u_{i-1}\in\mathrm{span}\{\hat{v}_1,\dots,\hat{v}_i\}+g_{i-1} .\]
	 Combining the above two relations with the induction hypothesis
	 \[\hat{\beta}_{i-1}Q_{L}^{T}\hat{u}_{i-1}= \hat{\beta}_{i-1}^{2}\hat{v}_{i}+\widehat{V}_{i-1}d_{i-1}+q_{i-1}\]
	 and $\|q_{i-1}\|=\mathcal{O}(\theta_{i-1}\kappa(C)\tau)$,
	 we get
	 \begin{equation}\label{mid1}
		\hat{\alpha}_iQ_{L}^{T}\hat{u}_i= \hat{\alpha}_i\hat{\beta}_{i}\hat{v}_{i+1}+\widehat{V}_i\tilde{d}_i
	 +(-1)^{i}(\alpha_{i}g_i+\beta_{i-1}g_{i-1})+q_{i-1}
	 \end{equation}
	 with an $\tilde{d}_i\in\mathbb{R}^i$. By \cref{lem4.1} we have
	 \begin{equation}\label{alp_beta}
		(\alpha_{i}^2+\beta_{i-1}^{2})^{1/2}= 
		\Big\|\begin{pmatrix}
			\beta_{i-1}e_{i}^{(i)} \\
			\alpha_{i}e_{1}^{(s)}
		\end{pmatrix} \Big\|\leq \Big\|\begin{pmatrix}
		0_{n} \\
		Q_Av_{i}
		 \end{pmatrix}\Big\| + \|f_{i}\| 
	   \leq 1+	\mathcal{O}(i\kappa(C)\tau).
	 \end{equation}	
 	Thus we get
 	\[\|\alpha_{i}g_i+\beta_{i-1}g_{i-1}\|\leq \sqrt{2}(\alpha_{i}^2+\beta_{i-1}^{2})^{1/2}
 	\max\{\|g_i\|,\|g_{i-1}\|\}=\mathcal{O}(\kappa(C)\tau) \]
 	by neglecting higher orders of $\tau$. We finally obtain from \cref{mid1}
	\[\hat{\beta}_iQ_{L}^{T}\hat{u}_i = \hat{\beta}_{i}^{2}\hat{v}_{i+1}+\widehat{V}_id_i+q_i \]
	with $q_i= \hat{\beta}_i/\hat{\alpha}_i[(-1)^{i}(\alpha_{i}g_i+\beta_{i-1}g_{i-1})+q_{i-1}]$,
	and
	\[\|q_i\|\leq\hat{\beta}_i/\hat{\alpha}_i\mathcal{O}(\kappa(C)\tau)+\hat{\beta}_i/\hat{\alpha}_iq_{i-1}
	= \mathcal{O}(\theta_i\kappa(C)\tau) \]
	since $\hat{\beta}_i/\hat{\alpha}_i+\hat{\beta}_i/\hat{\alpha}_i\cdot\theta_{i-1}=\theta_i$.
	This completes the proof of the induction step.
\end{proof}

Now we can give the proof of \Cref{thm4.3}.
\begin{proof}[Proof of \Cref{thm4.3}]
	Using the bidiagonal structure of $B_{k}$ and $\bar{B}_{k}$, we know that $H_k$ is symmetric tridiagonal. Note that the subdiagonals of $B_{k}^{T}B_{k}$ and $\bar{B}_{k}^{T}\bar{B}_{k}$ are $\alpha_{i}\beta_i$ and $-\hat{\alpha}_i\hat{\beta}_i$, respectively. Thus, the subdiagonals of $H_k$ are zero and $H_k$ is a diagonal matrix. For the $i$-th diagonal element that is $\alpha_{i}^2+\beta_{i-1}^{2}+\hat{\alpha}_{i}^{2}+
	\hat{\beta}_{i-1}^{2}$, we use the relations
	\begin{align*}
		& \alpha_{i}u_i+\beta_{i-1}u_{i-1}=\tilde{v}_{i}(1:m), \\
		& \hat{\alpha}_{i}\hat{u}_i+\hat{\beta}_{i-1}\hat{u}_{i-1}=\tilde{v}_{i}(m+1:m+p).
	\end{align*}
	Adding the squares of norms of the above two equalities leads to
	\[\alpha_{1}^2+\beta_{i-1}^{2}+\hat{\alpha}_{i}^{2}+
	\hat{\beta}_{i-1}^{2}+2(\alpha_{i}\beta_{i-1}u_{i-1}^{T}u_{i}+\hat{\alpha}_{i}\hat{\beta}_{i-1}\hat{u}_{i-1}^{T}\hat{u}_{i})=1 .\]
	For $i=1$, we have $\alpha_{1}^2+\hat{\alpha}_{i}^{2}=1$ due to $\beta_{0}=\hat{\beta}_{0}=0$. For $i>1$, since
	\begin{align*}
		\alpha_iu_{i-1}^{T}u_i
		&= u_{i-1}^{T}(Q_{A}v_i-\beta_{i-1}u_{i-1})
		 = u_{i-1}^{T}Q_{A}v_i-\beta_{i-1} \\
		&= v_{i}^{T}(\beta_{i-1}v_i+\alpha_{i-1}v_{i-1}+\sum_{j=1}^{i-1}\xi_{ji}v_j+g_{i-1})-\beta_{i-1} \\
		&= v_{i}^{T}g_{i-1},
	\end{align*}
	by \cref{alp_beta} we have
	\[|\alpha_i\beta_{i-1}u_{i-1}^{T}u_i| \leq\beta_{i-1}\|g_{i-1}\|
	 \leq [1+\mathcal{O}(i\kappa(C)\tau)]\|g_{i-1}\|= \mathcal{O}(\kappa(C)\tau) .\]
	By \Cref{lem4.5} we have
	\begin{align*}
		|\hat{\alpha}_{i}\hat{\beta}_{i-1}\hat{u}_{i-1}^{T}\hat{u}_{i}|
		&= |\hat{\beta}_{i-1}\hat{u}_{i-1}^{T}(Q_L\hat{v}_i-\hat{\beta}_{i-1}\hat{u}_{i-1})| \\
		&= |\hat{v}_{i}^{T}(\hat{\beta}_{i-1}Q_{L}^{T}\hat{u}_{i-1})-\hat{\beta}_{i-1}^{2}| \\
		&= |\hat{v}_{i}^{T}(\hat{\beta}_{i-1}^{2}\hat{v}_{i}+\widehat{V}_{i-1}l_{i-1}+
		q_{i-1})-\hat{\beta}_{i-1}^{2}| \\
		&= \mathcal{O}(\theta_{i-1}\kappa(C)\tau)
	\end{align*}
	Therefore we obtain
	\[-2(\alpha_{i}\beta_{i-1}u_{i-1}^{T}u_{i}+\hat{\alpha}_{i}\hat{\beta}_{i-1}\hat{u}_{i-1}^{T}\hat{u}_{i})
	= \mathcal{O}(\theta_{i-1}\kappa(C)\tau) ,\]
	which is the $i$-th diagonal of $H_{k}$.
\end{proof}

Similar to relations \cref{3.10,3.11}, there is a couple of recursive relations describing the reduction process from $Q_L$ to $\widehat{B}_k$.  
\begin{theorem}\label{thm4.4}
	The following relations hold for the $k$-step rJBD process:
	\begin{align}
	& Q_L\widehat{V}_k = \widehat{U}_k\widehat{B}_k, \label{Qb1} \\
	& Q_{L}^{T}\widehat{U}_{k}=\widehat{V}_{k}(\widehat{B}_{k}^{T}+\widehat{D}_{k}) +\hat{\beta}_{k}\hat{v}_{k+1}(e_{k}^{(k)})^{T}+\widehat{G}_{k} , \label{Qb2}
	\end{align}
	where $\widehat{D}_{k}$ is upper triangular, and
	\begin{equation}\label{3.17}
	\| \widehat{G}_{k}\|
	= \mathcal{O}(\|\widehat{B}_{k}^{-1}\|\sqrt{n}\kappa(C)\tau).
	\end{equation}
\end{theorem}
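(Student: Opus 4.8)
The plan is to treat the two relations separately: \cref{Qb1} is immediate, while the whole substance of the theorem is the bound \cref{3.17} on $\widehat{G}_k$. The forward relation \cref{Qb1} is just \cref{3.12}; equivalently, it drops out of the $\hat{u}$-recurrence in \Cref{alg2}, since $\tilde{v}_i(m+1:m+p)=Q_Lv_i$ and $(-1)^{i-1}v_i=\hat{v}_i$ give $\hat{\alpha}_i\hat{u}_i+\hat{\beta}_{i-1}\hat{u}_{i-1}=Q_L\hat{v}_i$, which in matrix form over $i=1,\dots,k$ is exactly $Q_L\widehat{V}_k=\widehat{U}_k\widehat{B}_k$.

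For \cref{Qb2} I would start from \Cref{lem4.5} and divide its identity by $\hat{\beta}_i$ to obtain, for $i=1,\dots,k$,
\[Q_L^T\hat{u}_i=\hat{\beta}_i\hat{v}_{i+1}+\widehat{V}_i(d_i/\hat{\beta}_i)+\hat{g}_i,\qquad \hat{g}_i:=q_i/\hat{\beta}_i.\]
Assembling these as the columns of $Q_L^T\widehat{U}_k$, the term $\hat{\beta}_i\hat{v}_{i+1}$ contributes the subdiagonal of $\widehat{B}_k^T$ for $i<k$ and the rank-one tail $\hat{\beta}_k\hat{v}_{k+1}(e_k^{(k)})^T$ for $i=k$, while each $\widehat{V}_i(d_i/\hat{\beta}_i)$ lies in $\mathrm{span}\{\hat{v}_1,\dots,\hat{v}_i\}$ and hence supplies the diagonal entry $\hat{\alpha}_i$ of $\widehat{B}_k^T$ together with an upper-triangular remainder. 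This is precisely the claimed decomposition, with $\widehat{D}_k$ upper triangular and $\widehat{G}_k=(\hat{g}_1,\dots,\hat{g}_k)$; the only real work left is to bound $\widehat{G}_k$.

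The crux is that the scalings $1/\hat{\beta}_i$ and the factor $\theta_i$ in $\|q_i\|$ must not be estimated column by column but recognized as a single bidiagonal recurrence. Setting $h_i:=(-1)^i(\alpha_ig_i+\beta_{i-1}g_{i-1})$, so that $\|h_i\|=\mathcal{O}(\kappa(C)\tau)$ by the estimate already established inside the proof of \Cref{lem4.5}, the recurrence $q_i=(\hat{\beta}_i/\hat{\alpha}_i)(h_i+q_{i-1})$ from that proof becomes, after division by $\hat{\beta}_i$,
\[\hat{\alpha}_i\hat{g}_i-\hat{\beta}_{i-1}\hat{g}_{i-1}=h_i,\qquad i=1,\dots,k,\]
with $\hat{g}_0:=0$. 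In matrix form this reads $\widehat{G}_kL_k=(h_1,\dots,h_k)=:\widehat{H}_k$, where $L_k$ is upper bidiagonal with diagonal $\hat{\alpha}_i$ and superdiagonal entries $-\hat{\beta}_i$. The key observation is that $L_k=D\widehat{B}_kD$ with $D=\diag(1,-1,\dots,(-1)^{k-1})$ and $D^2=I$, so that $\|L_k^{-1}\|=\|\widehat{B}_k^{-1}\|$. Hence $\widehat{G}_k=\widehat{H}_kL_k^{-1}$ and
\[\|\widehat{G}_k\|\le\|\widehat{H}_k\|\,\|\widehat{B}_k^{-1}\|\le\|\widehat{H}_k\|_F\,\|\widehat{B}_k^{-1}\|\le\sqrt{n}\max_{1\le i\le k}\|h_i\|\,\|\widehat{B}_k^{-1}\|=\mathcal{O}(\|\widehat{B}_k^{-1}\|\sqrt{n}\kappa(C)\tau),\]
using $k\le n$. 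I expect the similarity identity $L_k=D\widehat{B}_kD$ — which collapses the apparently growing accumulation governed by $\theta_i$ into the single clean factor $\|\widehat{B}_k^{-1}\|$ — to be the one genuinely nonroutine step; everything else is bookkeeping of which parts of the column identities land in $\widehat{B}_k^T$, in the upper-triangular $\widehat{D}_k$, and in the rank-one tail.
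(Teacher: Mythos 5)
Your proof is correct, but it takes a genuinely different route from the paper's. The paper obtains \cref{Qb2} globally: it premultiplies \cref{Qb1} by $Q_L^T$, adds the analogous identity for $Q_A$ built from \cref{3.10,3.11}, and invokes $Q_A^TQ_A+Q_L^TQ_L=I_n$ together with \Cref{thm4.3} to solve for $Q_L^T\widehat{U}_k\widehat{B}_k$; the error term then comes out directly as $\widehat{G}_k=-G_kB_kP\widehat{B}_k^{-1}$, so the factor $\|\widehat{B}_k^{-1}\|$ is visible immediately and the bound follows from $\|B_k\|\leq 1+\mathcal{O}(\sqrt{n}k\kappa(C)\tau)$ and $\|G_k\|_F\leq\sqrt{n}\max_i\|g_i\|$. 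You instead assemble \cref{Qb2} column by column from \Cref{lem4.5} and then invert the bidiagonal error recurrence $\hat{\alpha}_i\hat{g}_i-\hat{\beta}_{i-1}\hat{g}_{i-1}=h_i$; your identification $L_k=D\widehat{B}_kD$ (your $D$ is the paper's signature matrix $P$) is correct and cleanly recovers $\|L_k^{-1}\|=\|\widehat{B}_k^{-1}\|$, and your bound $\|h_i\|=\mathcal{O}(\kappa(C)\tau)$ is exactly the estimate via \cref{alp_beta} used inside the proof of \Cref{lem4.5}. What your route buys is an explicit explanation of why the apparently growing factor $\theta_i$ in the column-wise bound of \Cref{lem4.5} collapses into the single aggregate factor $\|\widehat{B}_k^{-1}\|$, and it decouples \cref{Qb2} from \Cref{thm4.3} entirely; what it costs is that it leans on the recurrence for $q_i$ established inside the \emph{proof} of \Cref{lem4.5} rather than on its statement, so as written it quietly re-derives the inductive content of that lemma. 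Both arguments yield valid (possibly different) choices of $\widehat{D}_k$ and $\widehat{G}_k$, which is all the theorem requires.
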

\begin{proof}
	Relation \cref{Qb1} is just \cref{3.12}. Combining \cref{3.10,3.11}, we have
	\begin{align*}
	Q_{A}^{T}Q_{A}V_{k} &=Q_{A}^{T}U_{k}(B_{k}) 
	= [V_{k}(B_{k}^{T}+D_{k})+\beta_{k}v_{k+1}(e_{k}^{(k)})^{T}+G_{k}]B_{k} \\
	&= V_{k}B_{k}^{T}B_{k}+\alpha_{k}\beta_{k}v_{k+1}(e_{k}^{(k)})^{T}+
	V_{k}D_{k}B_{k}+ G_{k}B_{k} .
	\end{align*}
	Premultiply \cref{Qb1} by $Q_{L}^{T}$, we have
	\begin{align*}
	Q_{L}^{T}Q_{L}V_{k}=Q_{L}^{T}\widehat{U}_{k}\widehat{B}_{k}P .
	\end{align*}
	Adding the above two equalities and using \cref{thm4.3}, we obtain
	\begin{align*}
	V_k &= (Q_{A}^{T}Q_{A}+Q_{L}^{T}Q_{L})V_{k} \\
	&=  V_{k}[I_{k}-P\widehat{B}_{k}^{T}\widehat{B}_{k}P+H_{k}]
	+Q_{L}\widehat{U}_{k}\widehat{B}_{k}P+\alpha_{k}\beta_{k}v_{k+1}(e_{k}^{(k)})^{T}+ V_{k}D_{k}B_{k}+
	G_{k}B_{k}.
	\end{align*}
	Using $\alpha_{k}\beta_{k}=\hat{\alpha}_k\hat{\beta}_k$ and $\hat{v}_{k+1}=(-1)^kv_{k+1}$, after some rearrangements we obtain 
	\begin{align*}
	\widehat{V}_{k}\widehat{B}_{k}^{T}\widehat{B}_{k}
	&= Q_{L}^{T}\widehat{U}_{k}\widehat{B}_{k}-\hat{\alpha}_{k}\hat{\beta}_{k}\hat{v}_{k+1}(e_{k}^{(k)})^{T}
	+\widehat{V}_kP(D_kB_k+H_k)P+G_kB_kP ,
	\end{align*}
	Therefore, we have
	\[ Q_{L}^{T}\widehat{U}_k=\widehat{V}_k(\widehat{B}_{k}^{T}+\widehat{D}_k)+
	\hat{\beta}_k\hat{v}_{k+1}(e_{k}^{(k)})^{T}-G_kB_kP\widehat{B}_{k}^{-1},\]
	where $\widehat{D}_k=-P(D_kB_k+H_k)P\widehat{B}_{k}^{-1}$ is upper triangular. Relation \cref{Qb2} is obtained by letting $\widehat{G}_{k}=-G_kB_kP\widehat{B}_{k}^{-1}$. By \cref{thm4.1} we get
	\[\|B_k\|=\|\bar{U}_{k}^{T}(Q_A+E_k)V_{k}\|\leq\|Q_A\|+\|E_k\|\leq 1+\mathcal{O}(\sqrt{n}k\kappa(C)\tau) .\]
	By neglecting high order terms of $\tau$ in $\|G_kB_kP\widehat{B}_{k}^{-1}\|$, we finally obtain the upper bound on $\|\widehat{G}_{k}\|$.
\end{proof}

Since $\widehat{D}_{k}$ is upper triangular, if we write the matrix $\widehat{D}_{k}$ as 
\[\widehat{D}_{k}=\begin{pmatrix}
	\hat{\xi}_{11} &\cdots &\cdots  & \hat{\xi}_{1k} \\
	& \hat{\xi}_{22} &\cdots & \hat{\xi}_{2k} \\
	&  &\ddots &\vdots \\
	& &  & \hat{\xi}_{kk} \\
\end{pmatrix} \in \mathbb{R}^{k \times k} ,\] 
then \Cref{thm4.4} implies for each $i = 1, \dots, k$ that
\[\hat{\beta}_{i}\hat{v}_{i+1} = Q_{L}^{T}\hat{u}_{i}-\hat{\alpha}_{i}\hat{v}_{i}-
\sum_{j=1}^{i}\hat{\xi}_{ji}\hat{v}_{j} - \hat{g}_{i} \]
with $\|\hat{g}_{i}\|= \mathcal{O}(\|\widehat{B}_{k}^{-1}\|\sqrt{n}\kappa(C)\tau)$, which corresponds to the reorthogonalization of $\hat{v}_{i}$ with error term $\hat{g}_{i}$, where $\hat{\xi}_{ji}$ are coefficients appeared in the reorthogonalization. Based on \cref{Qb1,Qb2}, a similar result as \Cref{thm4.1} about the bidiagonal reduction of $Q_L$ can also be obtained.

%%-----------------------------------------------------------------------------
\section{Convergence and accuracy of the approximate GSVD components}\label{sec5}
The results of \Cref{sec4} can be used to investigate the convergence and accuracy of GSVD components computed by rJBD. First we give a brief review on the JBD based GSVD computation. For the GSVD \cref{2.2} of $\{A,L\}$, let $X=(x_1,\ldots,x_n)$, $P_A=(p_{A,1},\ldots,p_{A,m})$ and $P_L=(p_{L,1},\ldots,p_{L,p})$. Then it can be written in the vector form:
\begin{equation*}
	\left\{
	\begin{aligned}
		& Ax_i=c_i p_{A,i}\\
		& Lx_i=s_i p_{L,i}\\
		& s_i A^Tp_{A,i}=c_iL^{T}p_{L,i}
	\end{aligned}
	\right.
	\qquad	
\end{equation*}
for $i=1,\dots,n$, where the $i$-th largest generalized singular value is $c_{i}/s_{i}$, and the corresponding generalized singular vectors are $x_{i}$, $p_{A,i}$ and $p_{L,i}$, respectively. We also use pair $\{c_i, s_i\}$ to denote a generalized singular value. Note that each $x_i$ satisfies the normalization condition:
\begin{equation}\label{normal}
	x_{i}^{T}(A^TA+L^TL)x_i = 1 .
\end{equation}
In this paper, we only consider approximations to $\{c_i, s_i\}$ and the corresponding right generalized singular vector $x_{i}$. In order to approximate left generalized singular vectors $p_{A,i}$ and $p_{L,i}$, a strategy extracting information from $\mbox{span}(U_{k})$ and $\mbox{span}(\widehat{U}_k)$ is needed since $U_{k}$ and $\widehat{U}_k$ are not column orthonormal. For a possibly worked method, one can refer to \cite[Section 6]{Barlow2013}.

Assume the compact SVD of $B_{k}$ is computed as
\begin{equation}\label{svd_Bk}
	B_{k} = P_{k}\Theta_{k}Y_{k}^{T}, \ \ \Theta_{k}=\mbox{diag}(c_{1}^{(k)}, \dots,
	c_{k}^{(k)}), \ \
	1 \geq c_{1}^{(k)} > \dots > c_{k}^{(k)} \geq 0,
\end{equation}
where $P_{k}=(p_{1}^{(k)}, \dots, p_{k}^{(k)})$ and $Y_{k}=(y_{1}^{(k)}, \dots, y_{k}^{(k)})$ are $k$-by-$k$ orthogonal matrices. Since $c_{i}^{2}+s_{i}^{2}=1$, we only need to compute $c_i$, and the approximate generalized singular values are $\{c_{i}^{(k)}, s_{i}^{(k)}\}$  with $s_{i}^{(k)}=(1-(c_{i}^{(k)})^{2})^{1/2}$. The approximate right generalized singular vectors are $x_{i}^{(k)}=R^{-1}V_{k}y_{i}^{(k)}$ for $i=1,\dots,k$. Recall from \Cref{sec2} that $R$ is invertible under the assumption that $\{A,L\}$ is a regular matrix pair. It is shown in \cite{Zha1996} that the explicit computation of $R^{-1}$ can be avoided to compute $x_{i}^{(k)}$ by solving
\begin{equation}\label{ls_x}
	\begin{pmatrix}
		A \\ L
	\end{pmatrix}x_{i}^{(k)}=QRR^{-1}V_{k}y_{i}^{(k)}=\widetilde{V}_{k}y_{i}^{(k)}
\end{equation}
iteratively. The above approximations can also be obtained by the SVD of $\widehat{B}_{k}$, which is connected with that of $B_k$ by \cref{3.18}. Detailed discussions about the SVD of $B_k$ and $\widehat{B}_k$ when $H_k\neq 0$ can be found in \cite[Section 4]{JiaLi2019}. Here we do not discuss it any more.

Now we investigate the final accuracy of computed GSVD components by the SVD of $B_{k}$. Suppose that the algorithm is stopped at the $k_0$-th step, and the singular values and right singular vectors of $B_{k_0}$ are $c_{i}^{(k_0)}$ and $w_{i}^{(k_0)}$ for $1\leq i\leq k_0$. By \Cref{thm4.1}, $c_{i}^{(k_0)}$ and $V_{k_0}w_{i}^{(k_0)}$ will approximate the SVD components of $\bar{Q}_{A}=Q_A+E_{k_0}$ since $B_{k_0}$ is the Ritz-Galerkin projection of $\bar{Q}_{A}$ on subspaces $\mathrm{span}(\bar{U}_{k_0})$ and $\mathrm{span}(V_{k_0})$. In order to analyze the final accuracy, we use the following assumption.

\begin{assumption}\label{assum2}
Denote the $i$-th largest singular value of $\bar{Q}_{A}$ by $\bar{c}_i$ with corresponding right singular vector $\bar{w}_i$. We assume at the $k_0$-th step that
\begin{equation}
	|\bar{c}_i-c_{i}^{(k_0)}| \ll \tau, \ \ \ 
	\|\bar{w}_i-V_{k_0}y_{i}^{(k_0)}\| \ll \tau .
\end{equation}
\end{assumption}
This assumption can always be satisfied for a sufficiently large $k_0 \leq n$, since $B_{k_0}$ can be used to approximate the SVD components of the $m\times n$ matrix $\bar{Q}_{A}$.

\begin{theorem}\label{thm5.1}
	For the rJBD based GSVD computation by the SVD of $B_{k}$ which stops at $k_0$ such that \Cref{assum2} is satisfied, it follows for any $1\leq i\leq k_0$ that
	\begin{equation}\label{sv_accur}
		|c_i-c_{i}^{(k_0)}| = \mathcal{O}(\sqrt{n}k_0\kappa(C)\tau) .
	\end{equation}
	Suppose the multiplicity of $\{c_i,s_i\}$ is $1$ and \cref{ls_x} is solved exactly. Let $\gamma_i=\min\{c_{i-1}-c_i, c_i-c_{i+1}\}$ for $1<i<k_0$ and $\gamma_1=c_1-c_2$, $\gamma_{k_0}=c_{k_0-1}-c_{k_0}$. For any $1\leq i\leq k_0$, if $\|E_{k_0}\|<\gamma_i$, then 
	\begin{equation}\label{rsv_accur}
		\|x_i-x_{i}^{(k_0)}\| /\|R^{-1}\| = \mathcal{O}\Big(\frac{\sqrt{n}k_0\kappa(C)\tau}{\gamma_{i}-\|E_{k_0}\|}\Big) .
	\end{equation}
\end{theorem}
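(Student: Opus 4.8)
The plan is to combine the backward error bound of \Cref{thm4.1} with standard perturbation theory for singular values and singular vectors, using \Cref{assum2} to bridge between the Ritz data of $B_{k_0}$ and the spectral data of $\bar{Q}_A=Q_A+E_{k_0}$. For the singular value estimate \cref{sv_accur}, I would first recall from the CS decomposition \Cref{thm_cs} that, under regularity, $Q_A=P_A\Sigma_A W^T$, so the $c_i$ are exactly the singular values of $Q_A$ and the columns $w_i$ of $W$ are the corresponding right singular vectors. Since $\bar{c}_i$ is the $i$-th singular value of $\bar{Q}_A$, Weyl's inequality gives $|c_i-\bar{c}_i|\le\|E_{k_0}\|$. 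Combining this with $|\bar{c}_i-c_{i}^{(k_0)}|\ll\tau$ from \Cref{assum2} via the triangle inequality, and absorbing the $\ll\tau$ term using $\|E_{k_0}\|=\mathcal{O}(\sqrt{n}k_0\kappa(C)\tau)$ from \Cref{thm4.1} (note $\kappa(C)\ge1$), yields \cref{sv_accur}.

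For the right singular vector estimate \cref{rsv_accur}, I would exploit the identity $X=R^{-1}W$, so that $x_i=R^{-1}w_i$, together with $x_{i}^{(k_0)}=R^{-1}V_{k_0}y_{i}^{(k_0)}$, which is exact because \cref{ls_x} is solved exactly. Factoring out $R^{-1}$ from $x_i-x_{i}^{(k_0)}=R^{-1}(w_i-V_{k_0}y_{i}^{(k_0)})$ and using submultiplicativity gives
\begin{equation*}
	\|x_i-x_{i}^{(k_0)}\|\le\|R^{-1}\|\,\|w_i-V_{k_0}y_{i}^{(k_0)}\|,
\end{equation*}
so it suffices to bound $\|w_i-V_{k_0}y_{i}^{(k_0)}\|$. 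Splitting this by the triangle inequality into $\|w_i-\bar{w}_i\|$ and $\|\bar{w}_i-V_{k_0}y_{i}^{(k_0)}\|$, the second term is $\ll\tau$ by \Cref{assum2}, leaving the genuine singular-vector rotation $\|w_i-\bar{w}_i\|$, i.e. the change of the right singular vector of $Q_A$ under the perturbation $E_{k_0}$.

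The main work, and the main obstacle, is bounding $\|w_i-\bar{w}_i\|$. Since $\{c_i,s_i\}$ has multiplicity one, $c_i$ is a simple singular value of $Q_A$ separated from the rest by the gap $\gamma_i$, and Weyl's inequality guarantees that every other singular value of $\bar{Q}_A$ stays at distance at least $\gamma_i-\|E_{k_0}\|>0$ from $c_i$ whenever $\|E_{k_0}\|<\gamma_i$. A $\sin\Theta$/Wedin-type bound for a simple singular vector then gives, after fixing the sign of $w_i$,
\begin{equation*}
	\|w_i-\bar{w}_i\|=\mathcal{O}\Big(\frac{\|E_{k_0}\|}{\gamma_i-\|E_{k_0}\|}\Big),
\end{equation*}
and inserting $\|E_{k_0}\|=\mathcal{O}(\sqrt{n}k_0\kappa(C)\tau)$ together with the previous estimates produces \cref{rsv_accur}. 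The delicate points are the sign convention for the singular vectors, so that $\|w_i-\bar{w}_i\|$ is the correct small quantity rather than a spurious $\mathcal{O}(1)$ difference, and verifying that the gap appearing in the Wedin bound may be replaced by the unperturbed quantity $\gamma_i-\|E_{k_0}\|$; both are resolved by the simplicity assumption combined with Weyl's inequality.
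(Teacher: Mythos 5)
Your proposal is correct and follows essentially the same route as the paper: Weyl's inequality for $|c_i-\bar{c}_i|\leq\|E_{k_0}\|$ combined with \Cref{assum2} via the triangle inequality for \cref{sv_accur}, and a $\sin\theta$-type perturbation bound for the simple singular vector (the paper cites \cite[Theorem 1.2.8]{Bjorck1996}) together with $X=R^{-1}W$ and factoring out $\|R^{-1}\|$ for \cref{rsv_accur}. Your explicit attention to the sign convention for $\bar{w}_i$ and to the gap appearing in the Wedin bound is a welcome refinement of details the paper leaves implicit, but it does not change the argument.
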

\begin{proof}
Notice that the SVD of $Q_A$ is $Q_A=P_AC_AW^T$. By the perturbation theory of singular values (see e.g., \cite[Corollary 8.6.2]{Golub2013}), we have
\begin{equation*}
	|c_i-\bar{c}_i| \leq \|Q_{A}-\bar{Q}_{A}\|= \|E_{k_0}\| = \mathcal{O}(\sqrt{n}k_0\kappa(C)\tau) .
\end{equation*}
Therefore, under \Cref{assum2} we have
\[|c_i-c_{i}^{(k_0)}|\leq |c_i-\bar{c}_i|+|\bar{c}_i-c_{i}^{(k_0)}| = \mathcal{O}(\sqrt{n}k_0\kappa(C)\tau) .\]

For $c_i$ that is a singular value of $Q_A$ with multiplicity $1$, by the perturbation theorem of singular vectors \cite[Theorem 1.2.8]{Bjorck1996}, we have the perturbation bound 
\begin{equation*}
	|\sin \theta(w_{i},\bar{w}_i)|\leq \frac{\lVert E_{k_0}\lVert}{\gamma_{i}-\lVert E_{k_0} \lVert},
\end{equation*}
% Here we use the 2-norm of the difference between $w_{i}$ and $\bar{w}_{i}$ (which have an acute angle between them) to measure the error. Then 
which leads to $\|w_{i}-\bar{w}_i\| = \mathcal{O}\Big(\frac{\sqrt{n}k_0\kappa(C)\tau}{\gamma_{i}-\lVert E_{k_0} \lVert}\Big)$
% \begin{equation*}
% \end{equation*}
by neglecting high order terms of $\tau$. Note that $x_{i}^{(k_0)}=R^{-1}V_{k_0}y_{i}^{(k_0)}$. Under \Cref{assum2} we have
\begin{align}
	\|x_i-x_{i}^{(k_0)}\| 
	&\leq \|R^{-1}w_i-R^{-1}\bar{w}_i\| + \|R^{-1}\bar{w}_i-R^{-1}V_{k_0}y_{i}^{(k_0)}\|  \nonumber \\
	&\leq \|R^{-1}\|\|w_{i}-\bar{w}_i\| +\|R^{-1}\|\|\bar{w}_i-V_{k_0}y_{i}^{(k_0)}\|  \nonumber \\
	&= \mathcal{O}\Big(\frac{\|R^{-1}\|\sqrt{n}k_0\kappa(C)\tau}{\gamma_{i}-\lVert E_{k_0} \lVert}\Big),  \nonumber
\end{align}
Dividing both sides by $\|R^{-1}\|$, the upper bound is obtained.
\end{proof}

We remark that the matrix-size/iteration-step dependent constant $\sqrt{n}k_0$ in $\mathcal{O}(\cdot)$ is nonessential, because it is introduced only for the end to estimate an upper bound for $\|E_{k_{0}}\|$. Note that the convergence rate of $c_{i}^{(k)}$ and $w_{i}^{(k)}$ as $k$ increases from $1$ to $k_0$ mainly depends on the convergence rate of approximating the SVD of $B_{k}$. Thus \Cref{thm5.1} implies that the final accuracy of approximate GSVD components is limited by $\kappa(C)\tau$ while the convergence rate is not affected too much. Combining \cref{svd_Bk} with \cref{3.18} we have $\bar{B}_{k}^{T}\bar{B}_{k}-H_k= Y_{k}(I_k-\Theta_{k}^2)Y_{k}^{T}$, which implies that the singular values of $\bar{B}_k$ are determined by that of $B_k$ within errors of order $\mathcal{O}(\kappa(C)\tau)$. Thus if we want to use the SVD of $\bar{B}_k$ to approximate $s_i$, the final accuracy is also limited by the value of $\kappa(C)\tau$; this will be illustrated by a numerical example in \Cref{sec6}.

Note that the normalization condition \cref{normal} is $x_{i}^{T}R^TRx_i=1$. The expression \cref{rsv_accur} can be regarded as another form of relative error. It indicates that the final accuracy of approximate right generalized singular vectors depends not only on the value of $\kappa(C)\tau$ but also on the gap between generalized singular values. For singular values with multiplicity bigger than $1$, the computation of invariant singular subspaces instead of single singular vectors are usually considered. Although in this case the mathematical expression is a bit more complicated, the spirit is similar to the approach of obtaining \cref{rsv_accur}; interested readers can refer to \cite{Wedin1972} or \cite[\S 5.4]{StewartSun1990}.

Finally, we investigate the solution accuracy of \cref{ls_x} for getting the final $x_{i}^{(k_0)}$. Suppose \cref{ls_x} is solved iteratively using stopping criterion \cref{stop} with tolerance $\bar{\tau}$ and the corresponding solution is $\bar{x}_{i}^{(k_0)}$. Using the same approach as that for establishing \cref{3.3}, we have
\begin{equation}\label{sov_accur}
	\dfrac{\|x_{i}^{(k_0)}-\bar{x}_{i}^{(k_0)}\|}{\|x_{i}^{(k_0)}\|}
	\leq \frac{\kappa(C)\bar{\tau}}{1-\kappa(C)\bar{\tau}} \Big(1+\frac{\kappa(C)\lVert \tilde{s}_{i}\lVert}{\lVert C\lVert\lVert x_{i}^{(k_0)}\lVert}\Big)
	= \frac{\kappa(C)\bar{\tau}}{1-\kappa(C)\bar{\tau}} ,
\end{equation}
where the residual $\tilde{s}_{i}=\widetilde{V}_{k}y_{i}^{(k_0)}-Cx_{i}^{(k_0)}=0$ since \cref{ls_x} is consistent. Comparing \cref{rsv_accur,sov_accur}, the relative error $\|x_{i}^{(k_0)}-\bar{x}_{i}^{(k_0)}\|/\|x_{i}^{(k_0)}\|$ need not be much smaller than $\kappa(C)\tau/\gamma_i$. For a well-conditioned $C$, values $\bar{\tau}\in[0.1\tau, 10\tau]$ are often feasible as illustrated by experimental results.

%%-------------------------------------------------------------------------------------------
\section{Experimental results}\label{sec6}
We report some experimental results to justify the theoretical results obtained. All numerical experiments are performed in  MATLAB R2019b, where all computations are carried out using double precision with roundoff unit $2^{-53}\approx 1.11\times 10^{-16}$. The codes are available at \url{https://github.com/Machealb/gsvd_iter}.

The tested matrices are mainly taken from the SuiteSparse matrix collection \cite{Davis2011} with the same names. The matrix $L_{1d}\in\mathbb{R}^{(n-1)\times n}$ is a bidiagonal matrix with one more row than columns and values $-1$ and $1$ on the subdiagonal and diagonal parts, respectively. The matrix pair $\{A_1,L_1\}$ is constructed as follows. Set $m=n=p=10000$. Let $C_A=\mathrm{diag}(\{c_i\}_{i=1}^{n})$ with $c_i=(n-i+1)/2n$ and $S_L=\mathrm{diag}(\{s_i\}_{i=1}^{n})$ with $s_i=(1-c_{i}^2)^{1/2}$. Then let $D$ be a diagonal matrix generated by the MATLAB built-in function $D=\texttt{diag(linspace(1,1e5,n))}$. Finally let $A_1=C_AD$ and $L_1=S_LD$. By the construction, the QR factorization of 
$C=\begin{pmatrix}
	A_1 \\ L_1
\end{pmatrix}$ is $\begin{pmatrix}
	A_1 \\ L_1
\end{pmatrix} = \begin{pmatrix}
	C_A \\ S_L
\end{pmatrix}D$ with $Q=\begin{pmatrix}
	C_A \\ S_L
\end{pmatrix}$, and $\kappa(C)=\kappa(D)=10^5$.
The properties of the matrices are described in \Cref{tab1}. Note the scaling factor $0.1$ before $L_{1d}$, which ensures a faster convergence of LSQR for inner iterations. For each matrix pair, we use the random vector $s=\texttt{randn(n,1)}$ with random seed $\texttt{rng(2022)}$ as the starting vector for JBD and rJBD. The inner least squares problem is iteratively solved by the LSQR with stopping criterion \cref{stop}.

%% The description of test problems.
\begin{table}[htbp]\label{tab1}
	\footnotesize
	\caption{Properties of the test matrices}
	\begin{center}
		\begin{tabular}{llll}
			\toprule
			Matrix pair & $m,p,n$  & $\kappa(C)$ & Description  \\
			\midrule
			\{\sf illc1850, well1850\}  & $1850, 1850, 712$ & 38.6 & least squares problem    \\
			\{\sf dw2048, rdb2048\} & $2048, 2048, 2048$ & 261.0 & electromagnetics problem  \\
			\{\sf swang1, $0.1L_{1d}$\}  & $3169, 3168, 3169$ & 45.0 & semiconductor device problem  \\
			\{\sf $A_1$, $L_1$\} & $10000, 10000, 10000$  & 100000.0 & self-constructed  \\
			\bottomrule
		\end{tabular}
	\end{center}
\end{table}

\begin{figure}[htbp]
	\centering
	\subfloat[\{\sf illc1850, well1850\}]{\label{fig:1a}\includegraphics[width=0.4\textwidth]{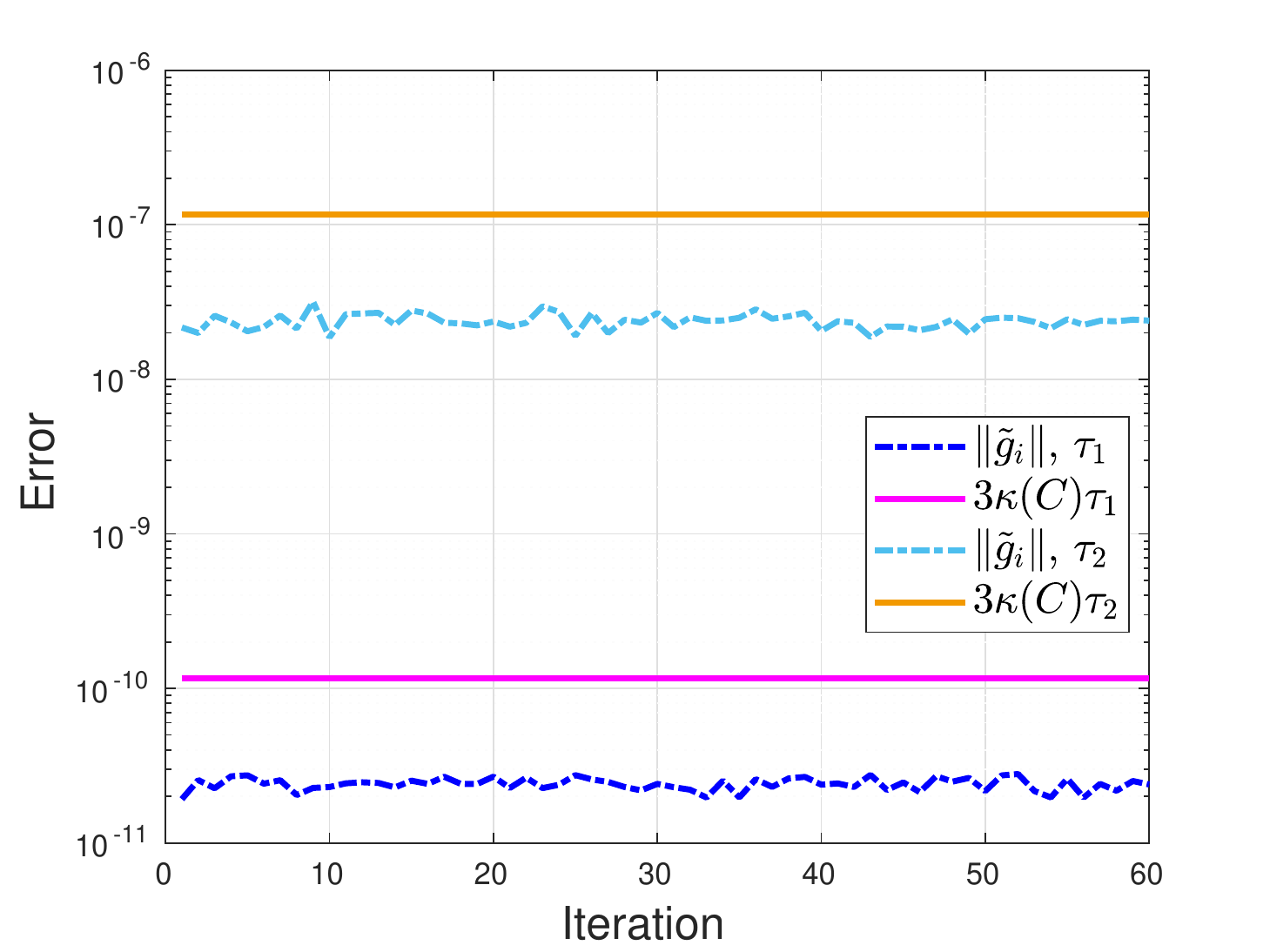}}	
	\subfloat[\{\sf dw2048, rdb2048\}]{\label{fig:1b}\includegraphics[width=0.4\textwidth]{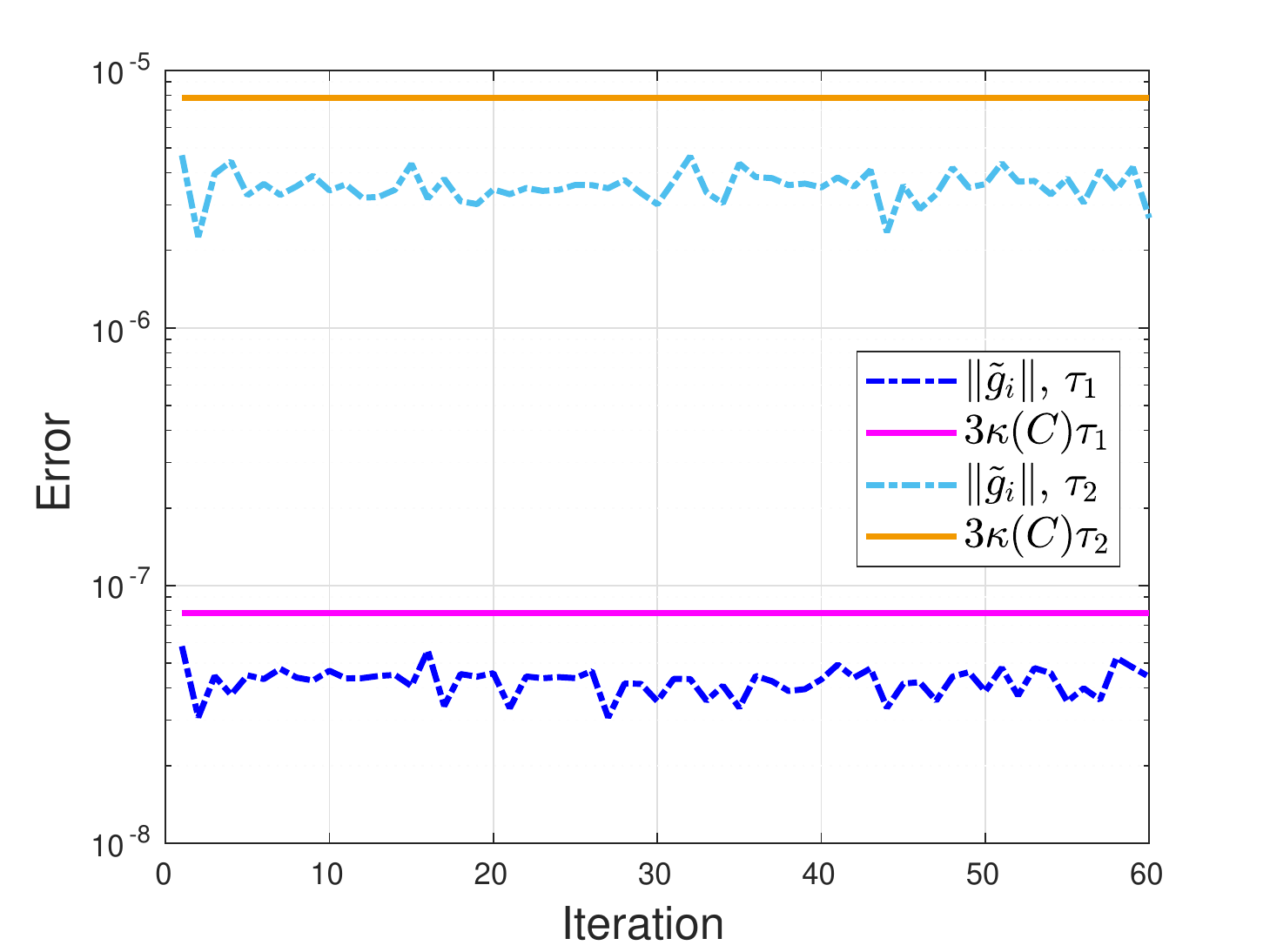}}
	\quad  % 
	\subfloat[\{\sf swang1, $0.1L_{1d}$\}]{\label{fig:1c}\includegraphics[width=0.4\textwidth]{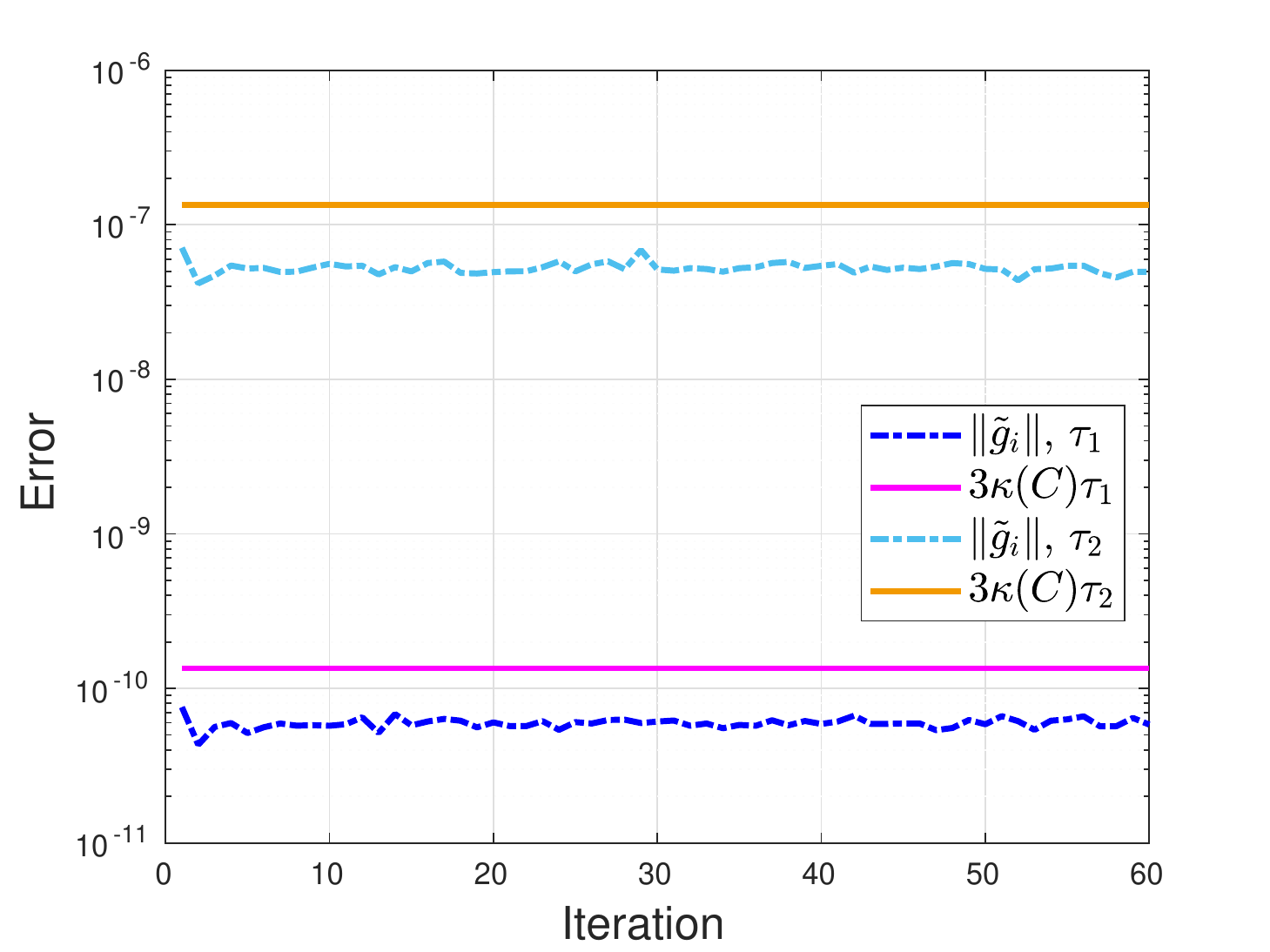}}	
	\subfloat[\{\sf $A_1$, $L_1$\}]{\label{fig:1d}\includegraphics[width=0.4\textwidth]{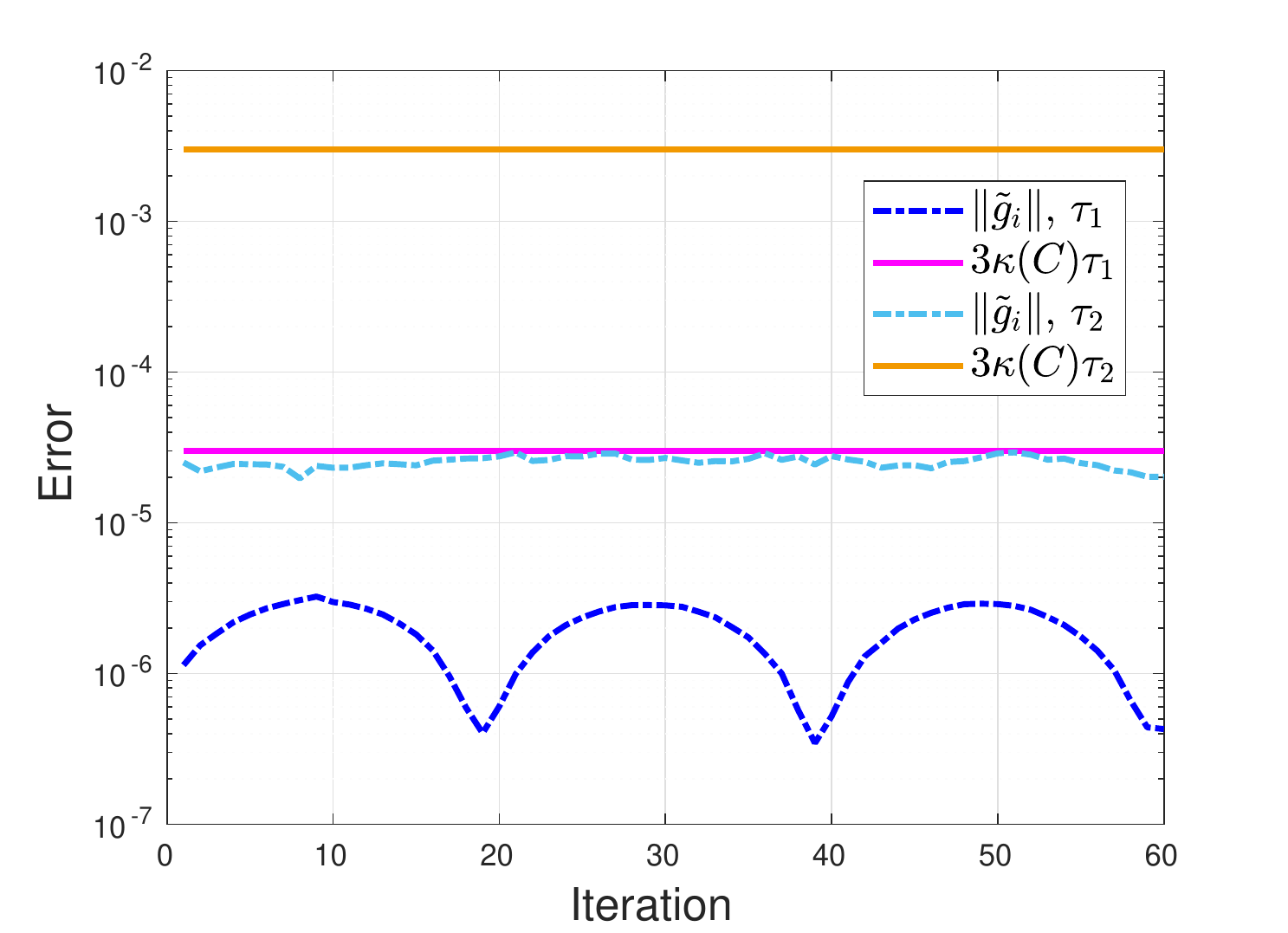}}
	\caption{Computational error $\tilde{g}_i$ and its upper bound for JBD. For \{\sf illc1850,well1850\} and \{\sf swang1,$0.1L_{1d}$\}, $\tau_1=10^{-12}$, $\tau_2=10^{-9}$; for \{\sf dw2048,rdb2048\} and \{\sf $A_1$,$L_1$\}, $\tau_1=10^{-10}$, $\tau_2=10^{-8}$.}
	\label{fig1}
\end{figure}

\begin{figure}[htbp]
	\centering	
	\subfloat[\{\sf illc1850, well1850\}, JBD]{\label{fig:12a}\includegraphics[width=0.4\textwidth]{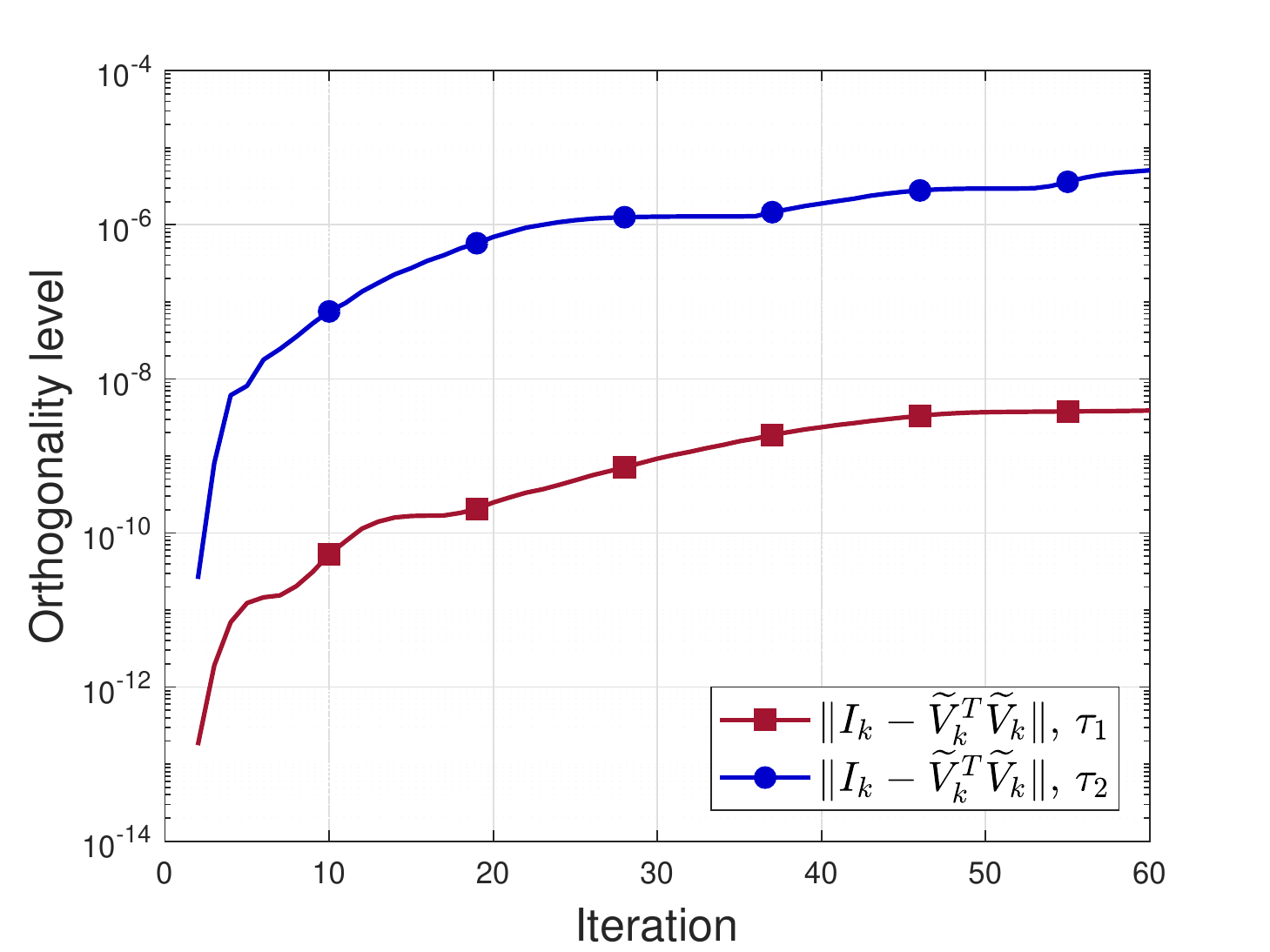}}
	\subfloat[\{\sf dw2048, rdb2048\}, JBD]{\label{fig:12b}\includegraphics[width=0.4\textwidth]{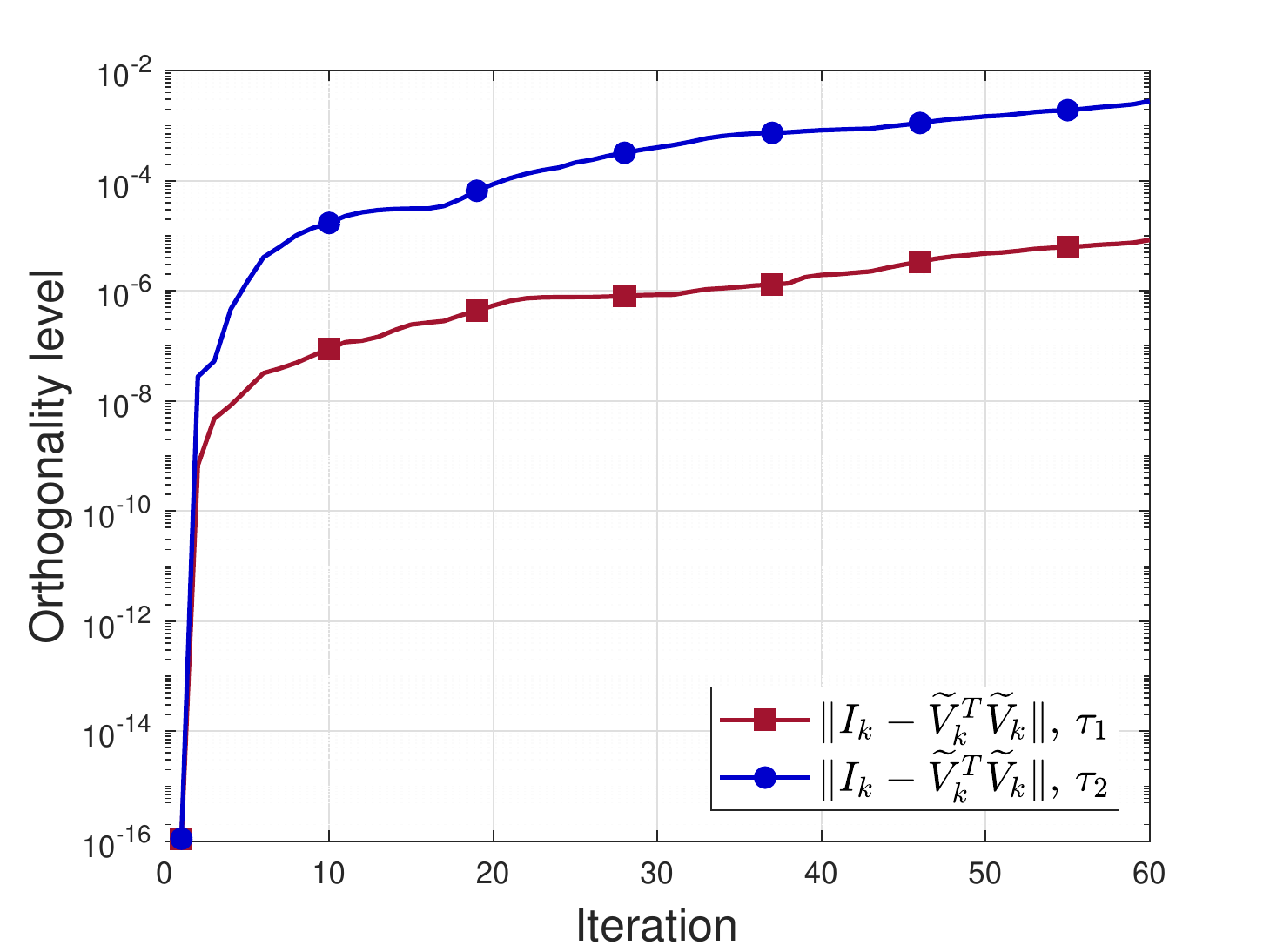}}
	\quad  % 
	\subfloat[\{\sf illc1850, well1850\}, rJBD, $\tau=\tau_1$]{\label{fig:12c}\includegraphics[width=0.41\textwidth]{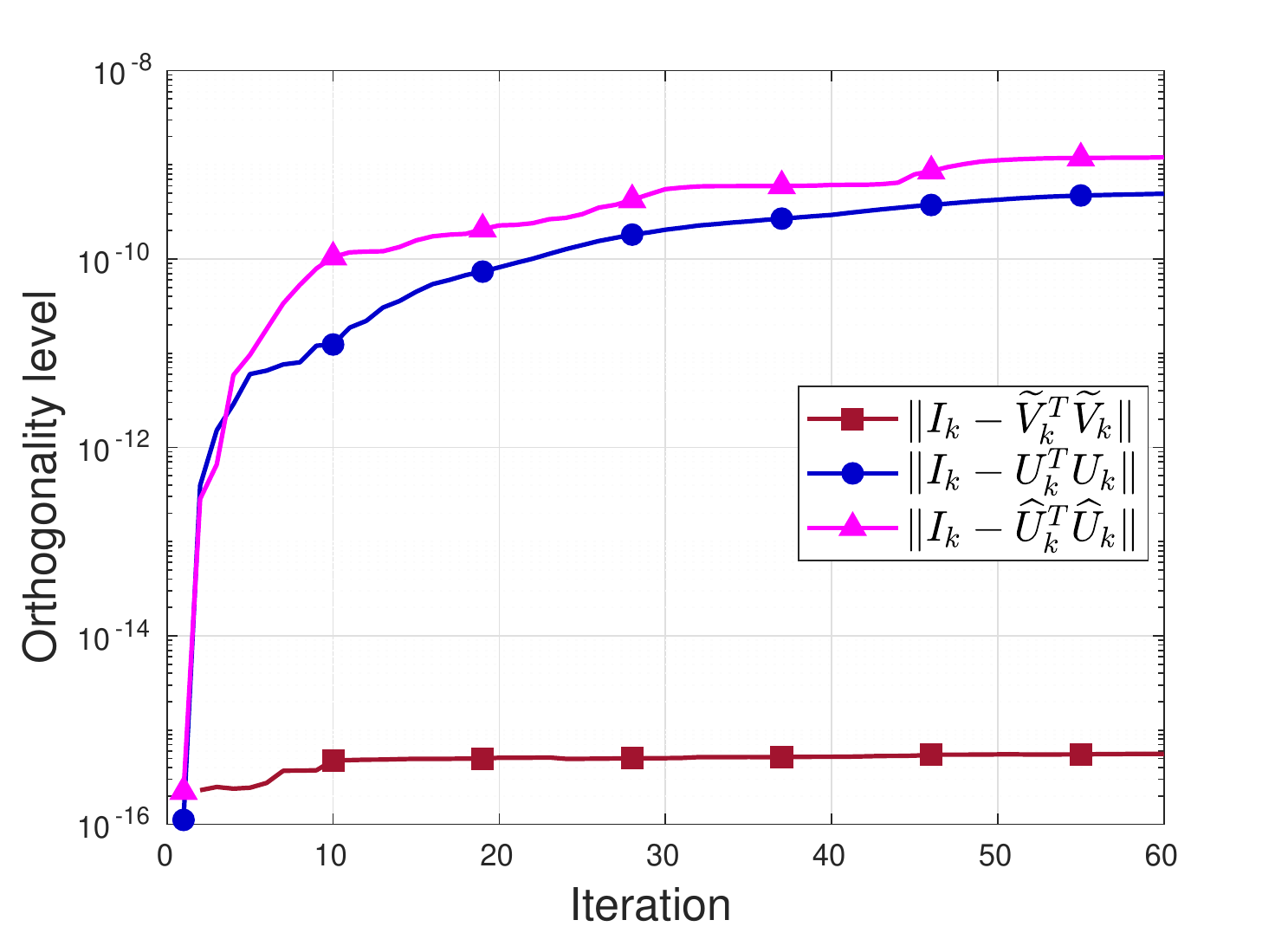}}
	\subfloat[\{\sf dw2048, rdb2048\}, rJBD, $\tau=\tau_1$]{\label{fig:12d}\includegraphics[width=0.41\textwidth]{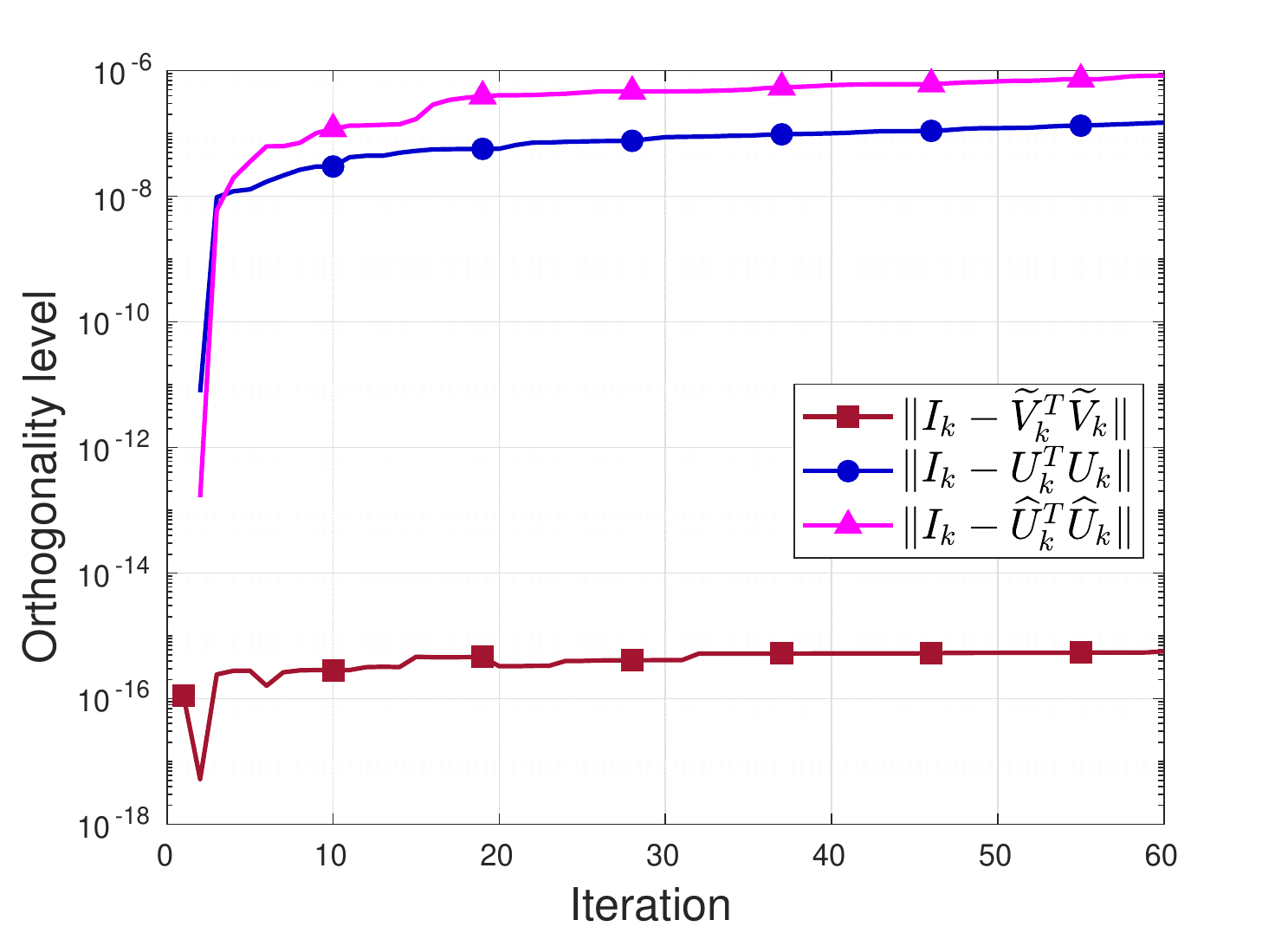}}
	\caption{Loss of orthogonality of $\tilde{v}_i$ for JBD and orthogonality level of $\tilde{v}_i$, $u_i$ and $\hat{u}_i$ for rJBD.}
	\label{fig1.2}
\end{figure}

\paragraph*{Experiments for JBD and rJBD}
We first use some numerical experiments to confirm the theoretical results about JBD and rJBD process. \Cref{fig1} shows the numerical behavior of JBD where inner iterations are computed using LSQR with stopping tolerance $\tau$. The computational error $\tilde{g}_i$ and its upper bound \cref{bound_g} of the four test matrix pairs are drawn. For each test matrix pair and each $\tau=\tau_1$ or $\tau=\tau_2$, we find that $\|\tilde{g}_i\|$ varies slightly and $3\kappa(C)\tau$ is an upper bound, which confirms \cref{thm3.1}. Note that for \{$A_1$, $L_1$\} the upper bound is more over-estimated than others, this is because the upper bound in \cref{3.4} is more likely to be over-estimated when the condition number is very large. Besides, in the experiments we have found that for \cref{ls} it takes extremely many iterations (even large than $n$) of LSQR to achieve the desired accuracy described by \cref{stop}. In this case, a proper preconditioner or a scaling factor transforming $\{A,L\}$ to $\{A,\gamma L\}$ can be very useful to accelerate convergence. \Cref{fig1.2} depicts the orthogonality level of $\tilde{v}_i$, $u_i$ and $\hat{u}_i$ measured by $\|I_{k}-\widetilde{V}_{k}^{T}\widetilde{V}_k\|$ and so on when inner iterations are computed inexactly. We can find that a large $\tau$ leads to a more quickly loss of orthogonality of $\tilde{v}_i$. This phenomenon has already been observed in \cite{Zha1996}, the reason of which is revealed by \cref{thm3.2}. For rJBD that applies full reorthogonalization to $\tilde{v}_i$, the orthogonality level of $\tilde{v}_i$ is kept around a value close to the roundoff unit, but the orthogonality of $u_i$ and $\hat{u}_i$ still loses gradually since reorthogonalization is not applied to them.

\begin{figure}[htbp]
	\centering
	\subfloat[\{\sf illc1850, well1850\}, $\tau=10^{-10}$]{\label{fig:2a}\includegraphics[width=0.4\textwidth]{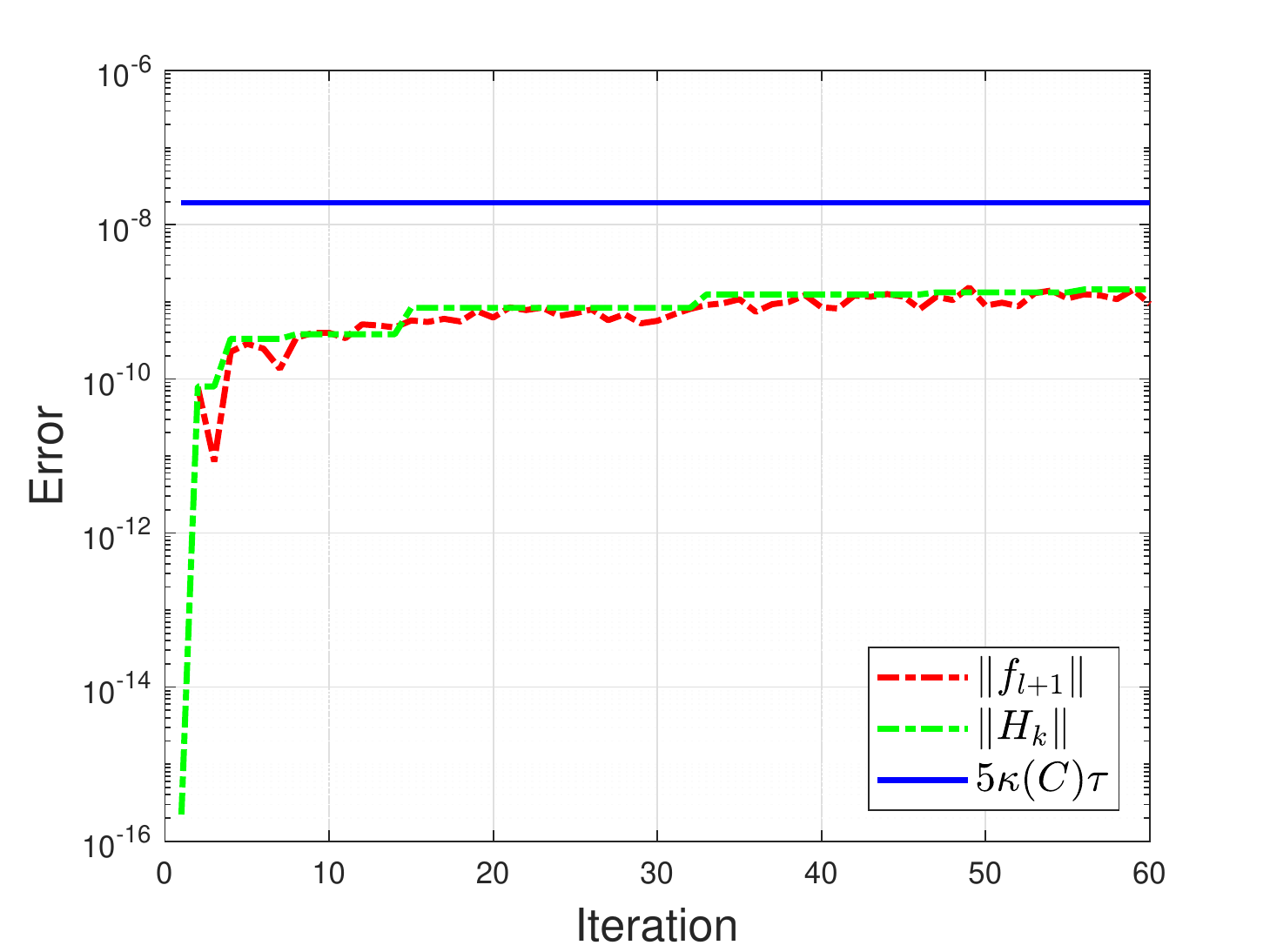}}
	\subfloat[\{\sf dw2048, rdb2048\}, $\tau=10^{-9}$]{\label{fig:2b}\includegraphics[width=0.4\textwidth]{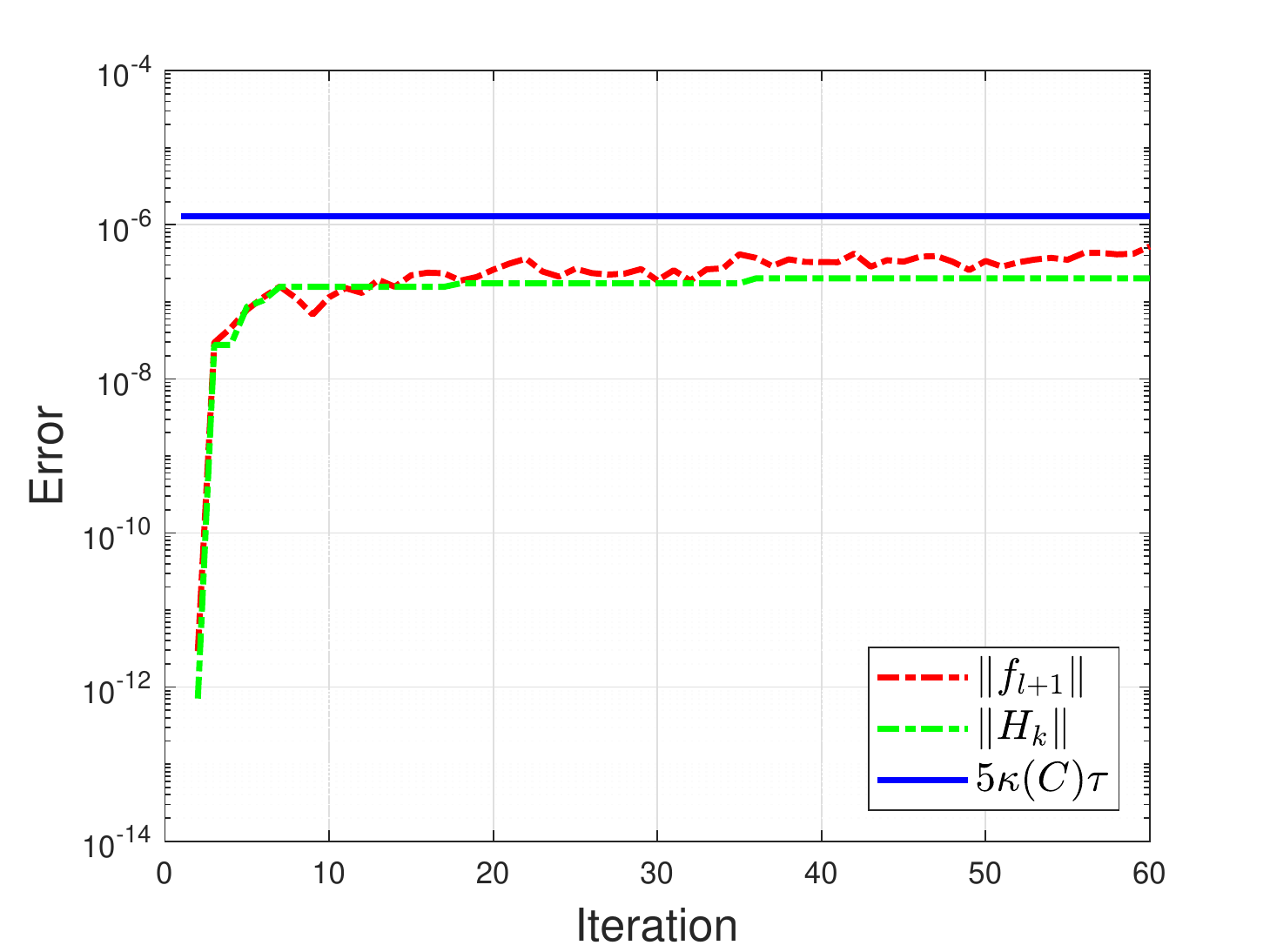}}
	\quad  % 
	\subfloat[\{\sf swang1, $0.1L_1$\}, $\tau=10^{-8}$]{\label{fig:2c}\includegraphics[width=0.4\textwidth]{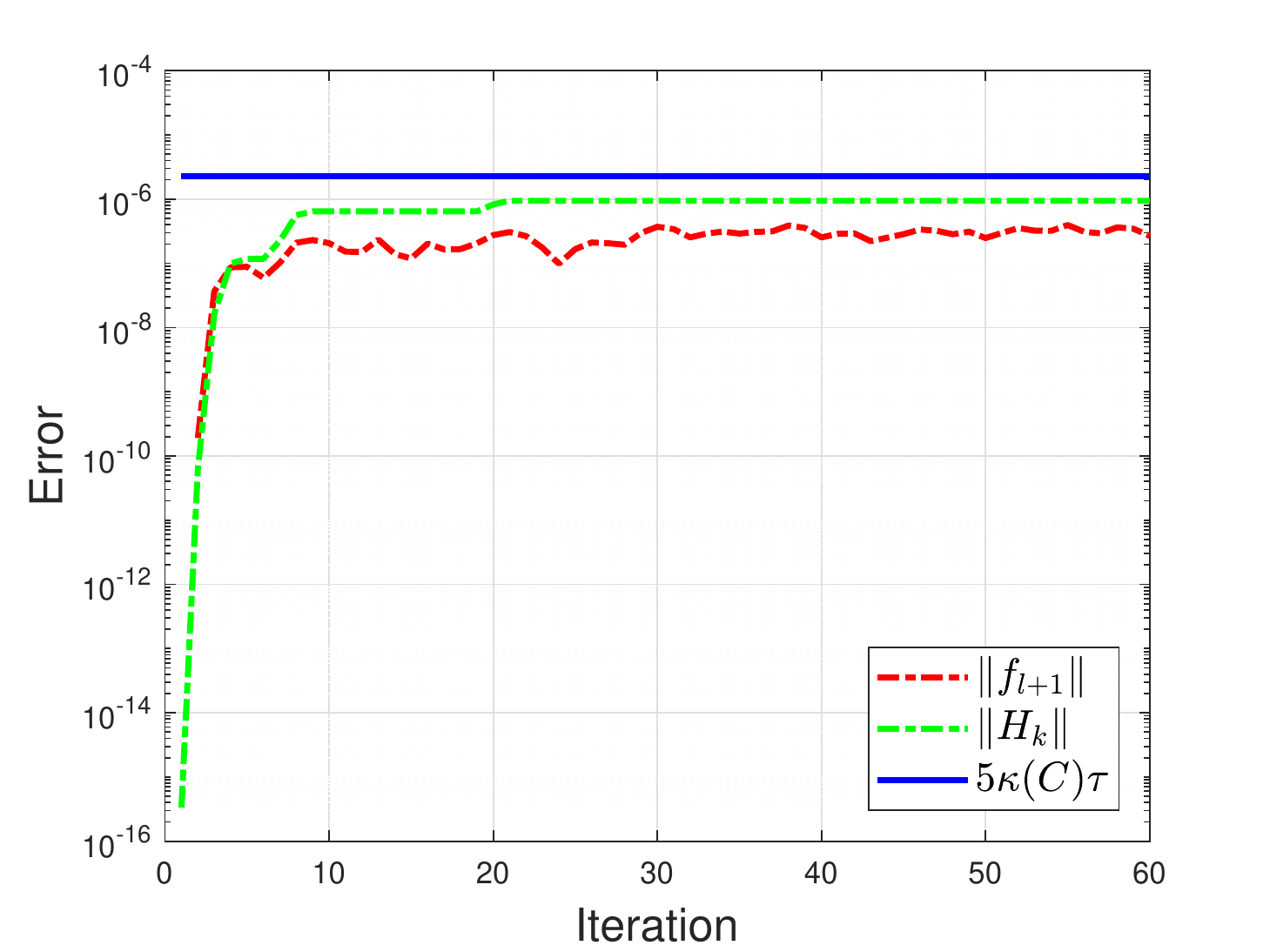}}	
	\subfloat[\{\sf $A_1$, $B_1$\}, $\tau=10^{-7}$]{\label{fig:2d}\includegraphics[width=0.41\textwidth]{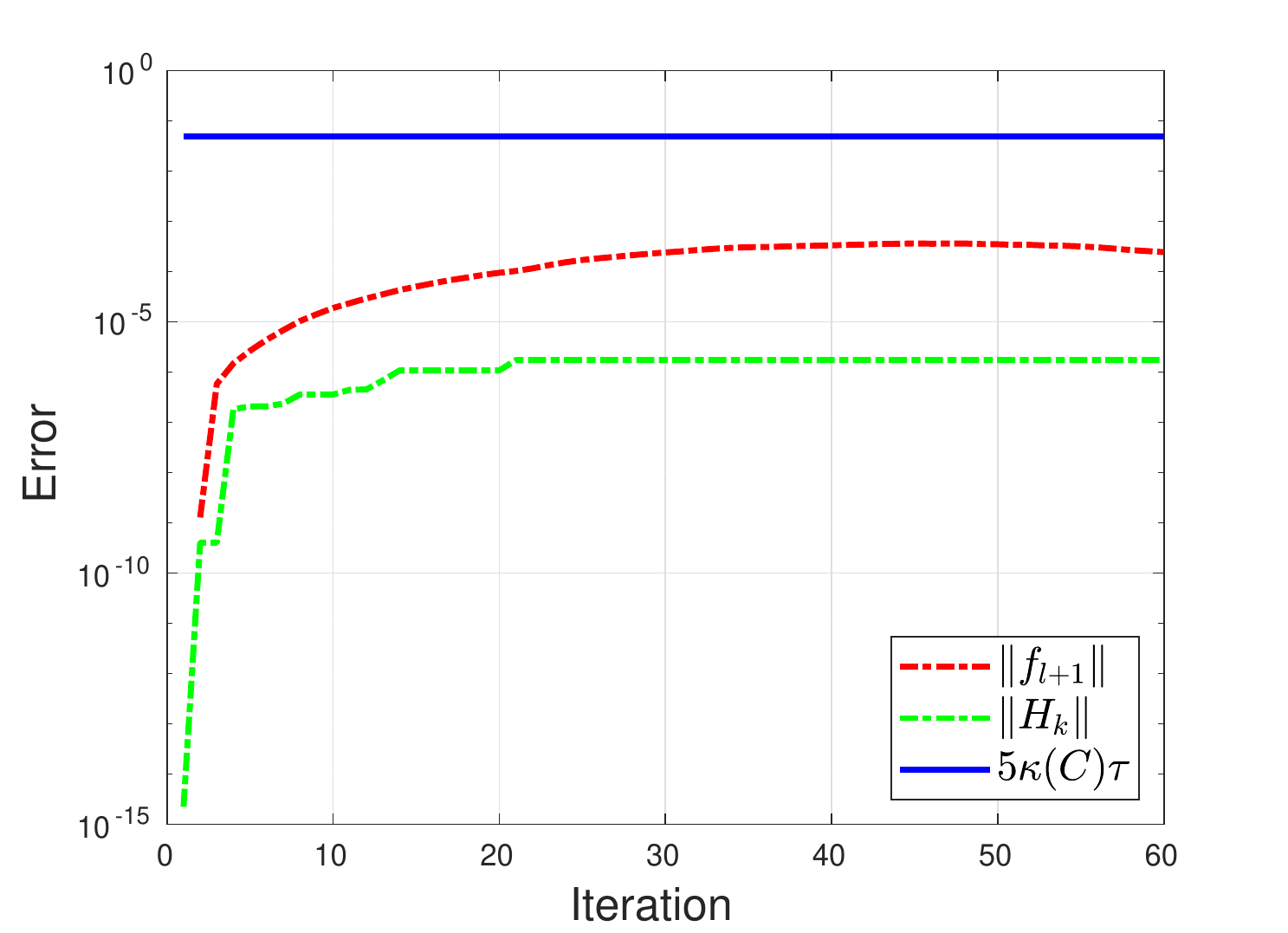}}
	\caption{The norm of error terms $f_{l+1}$, $H_k$ and their upper bounds for rJBD.}
	\label{fig2}
\end{figure}

\begin{figure}[htbp]
	\centering
	\subfloat[\{\sf illc1850, well1850\}, $\tau=10^{-10}$]{\label{fig:3a}\includegraphics[width=0.4\textwidth]{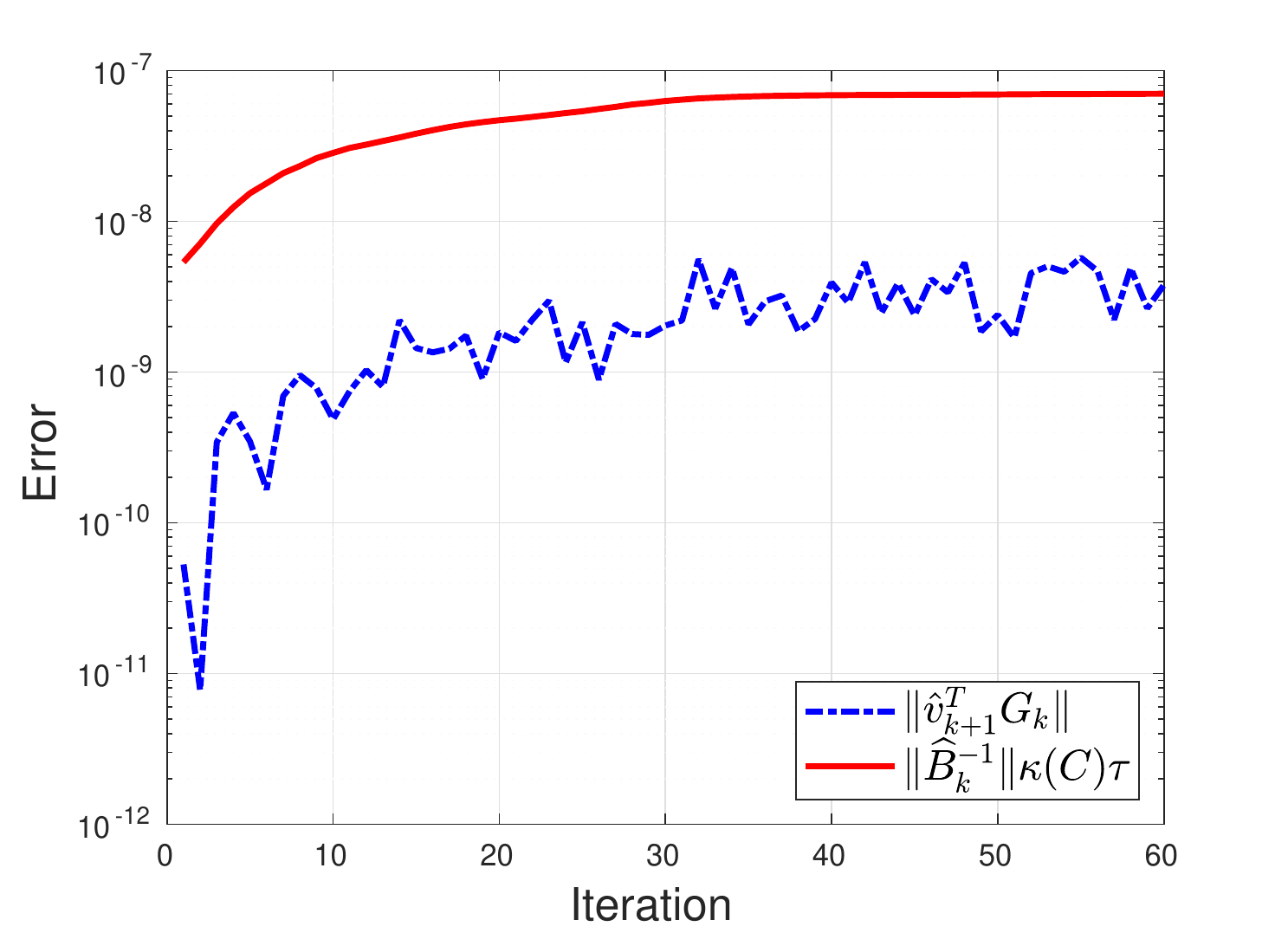}}	
	\subfloat[\{\sf dw2048, rdb2048\}, $\tau=10^{-9}$]{\label{fig:3b}\includegraphics[width=0.4\textwidth]{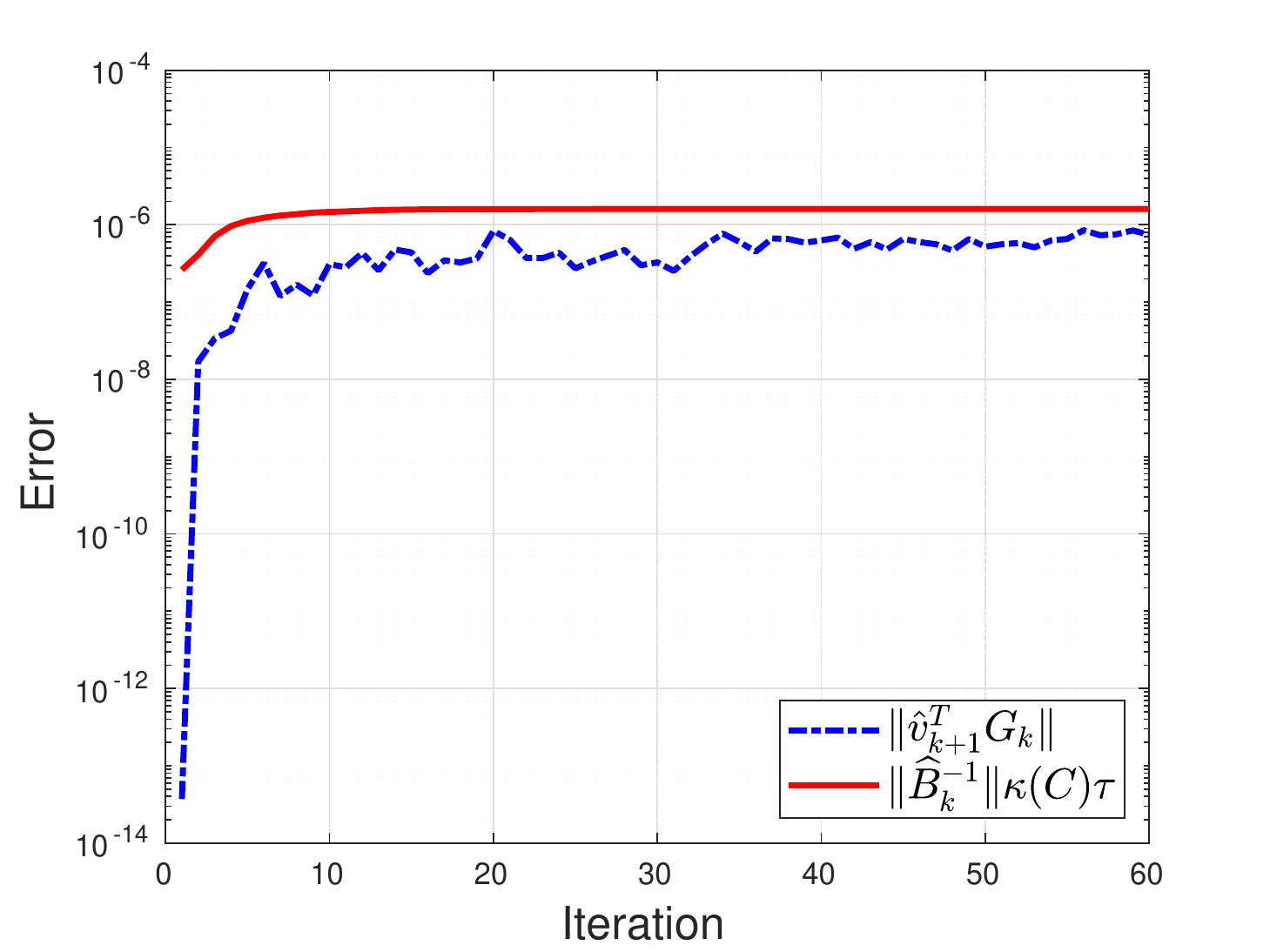}}
	\quad  %
	\subfloat[\{\sf swang1, $0.1L_1$\}, $\tau=10^{-8}$]{\label{fig:3c}\includegraphics[width=0.4\textwidth]{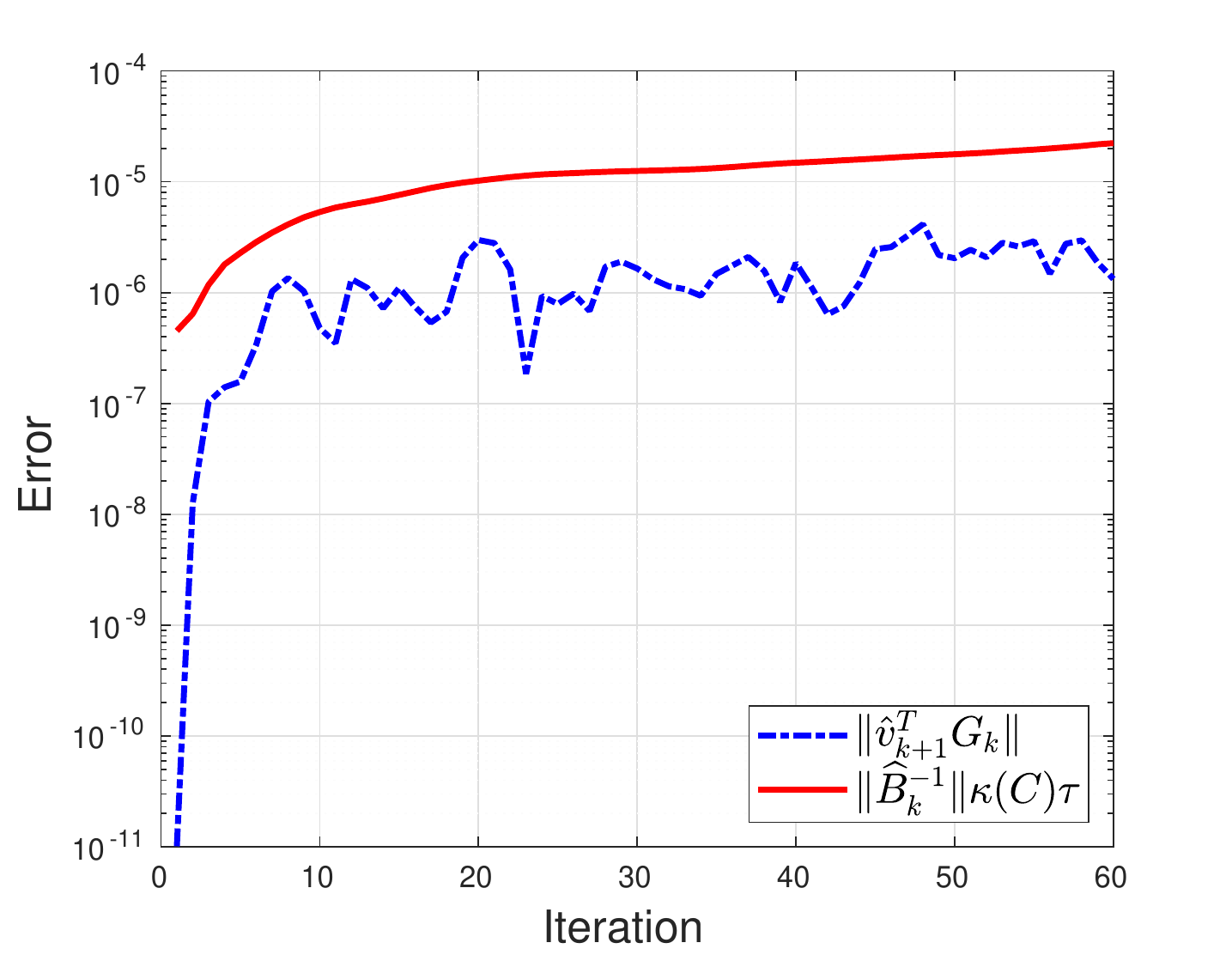}}	
	\subfloat[\{\sf $A_1$, $B_1$\}, $\tau=10^{-7}$]{\label{fig:3d}\includegraphics[width=0.41\textwidth]{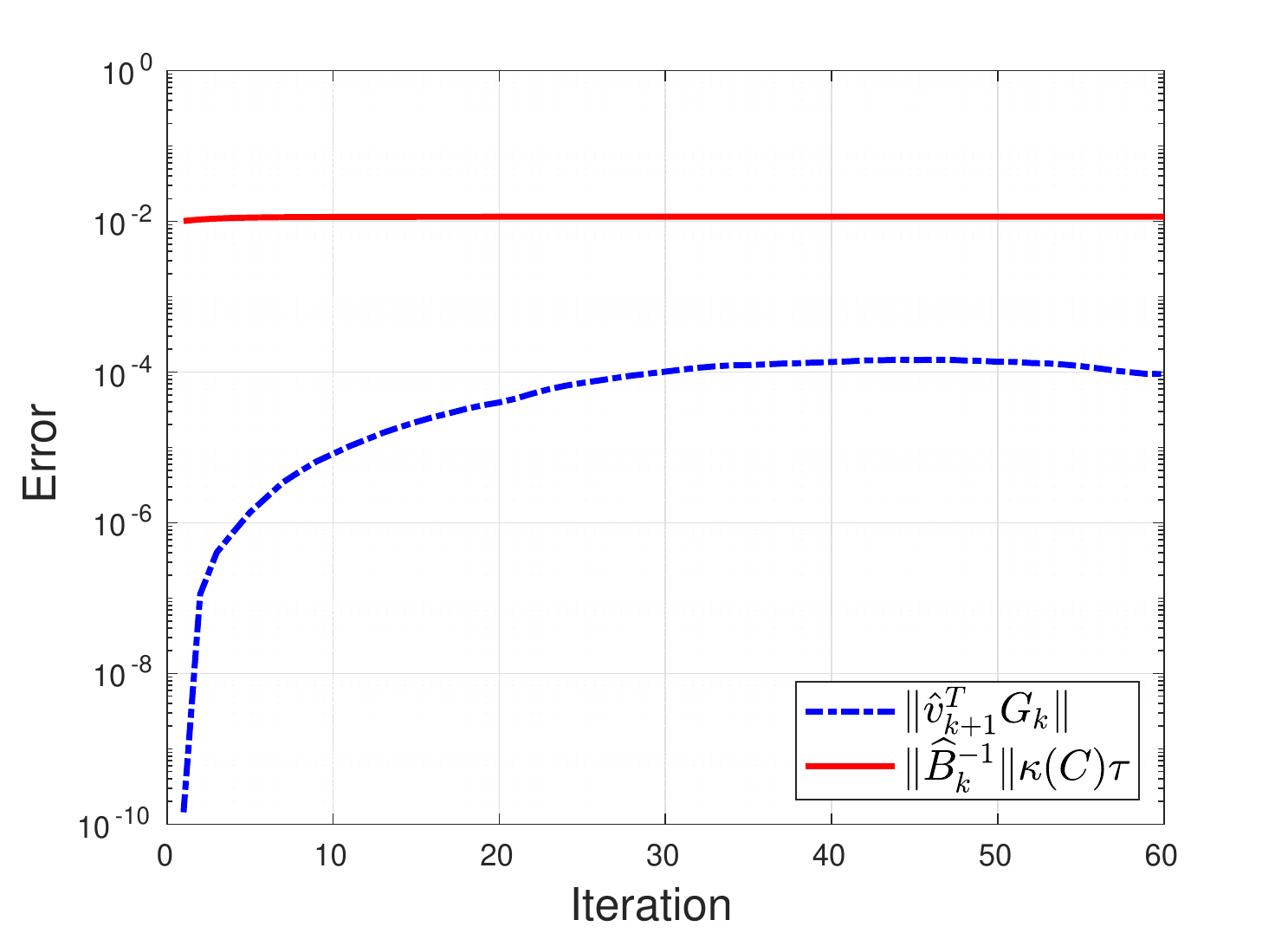}}
	\caption{The norm of error terms $\tilde{v}_{k+1}^{T}\widehat{G}_k$ and its upper bound for rJBD.}
	\label{fig3}
\end{figure}

\Cref{fig2} and \cref{fig3} illustrate our error analysis results of rJBD. For the four test examples, both the norms of $f_{l+1}$ and $H_k$ grow slightly with respect to iteration number $l$ or $k$, and they can be controlled by $\kappa(C)\tau$ times a moderate constant for the rJBD with not too many iterations. This confirms \Cref{lem4.1} and \Cref{thm4.3}. The relation \cref{3.18} indicates that the singular values of $\widehat{B}_k$ are determined by that of $B_k$ with a perturbation of order $\|H_k\|=\mathcal{O}(\kappa(C)\tau)$. Therefore, together with \cref{thm4.1}, we can expect that the absolute errors of approximate generalized singular values computed by the SVD of $B_k$ or $\widehat{B}_k$ are both of order $\mathcal{O}(\kappa(C)\tau)$ and the convergence behavior of the two are very similar as long as the value of $\kappa(C)\tau$ is small to a certain extent, as will be shown in \Cref{fig4}. Since $\widehat{G}_k$ can not be computed explicitly, we depict $\|\tilde{v}_{k+1}^{T}\widehat{G}_k\|$ with $\tilde{v}_{k+1}^{T}\widehat{G}_k=\tilde{v}_{k+1}^{T}Q_{L}^{T}\widehat{U}_{k}-\hat{\beta}_{k}e_{k}^{(k)}$ by \cref{Qb2}. We observe that $\|\tilde{v}_{k+1}^{T}\widehat{G}_k\|$ grows slightly with $\|\widehat{B}_{k}^{-1}\kappa(C)\tau\|$ an upper bound, which confirms \Cref{thm4.4}. For \{$A_1$, $L_1$\} the upper bounds are more over-estimated than others, the reason of which is the same as the case of $\tilde{g}_i$.

%-----------------------
\paragraph*{Experiments for GSVD computations}
Then we illustrate the numerical behavior of the rJBD method for partial GSVD computations. The matrix pair $\{A_2,L_2\}$ used in the experiments is constructed as follows. Set $m=n=p=800$. Let $C_A=\mathrm{diag}(\{c_i\}_{i=1}^{n})$ with $c(1)=0.99, c(2)=0.98, c(3)=0.97$, $c(4:n-3)=\texttt{linspace(0.96,0.04,n-6)}$ and $c(n-2)=0.03, c(n-1)=0.02, c(n)=0.01$, and $S_B=\mathrm{diag}(\{s_i\}_{i=1}^{n})$ with $s_i=(1-c_{i}^2)^{1/2}$. Then let $W$ be an orthogonal matrix by letting $W=\texttt{gallery(`orthog',n,2)}$, and $D=\texttt{diag(linspace(1,10,n))}$. Finally let $A_2=C_AW^TD$ and $L_2=S_LW^TD$. By the construction, the generalized singular values of $\{A_2,L_2\}$ are $c_i/s_i$ with right generalized singular vectors be the $i$-th column of $D^{-1}W$, and $\kappa(C)=10$.

\begin{figure}[htbp]
	\centering
	\subfloat[JBD]{\label{fig:4a}\includegraphics[width=0.4\textwidth]{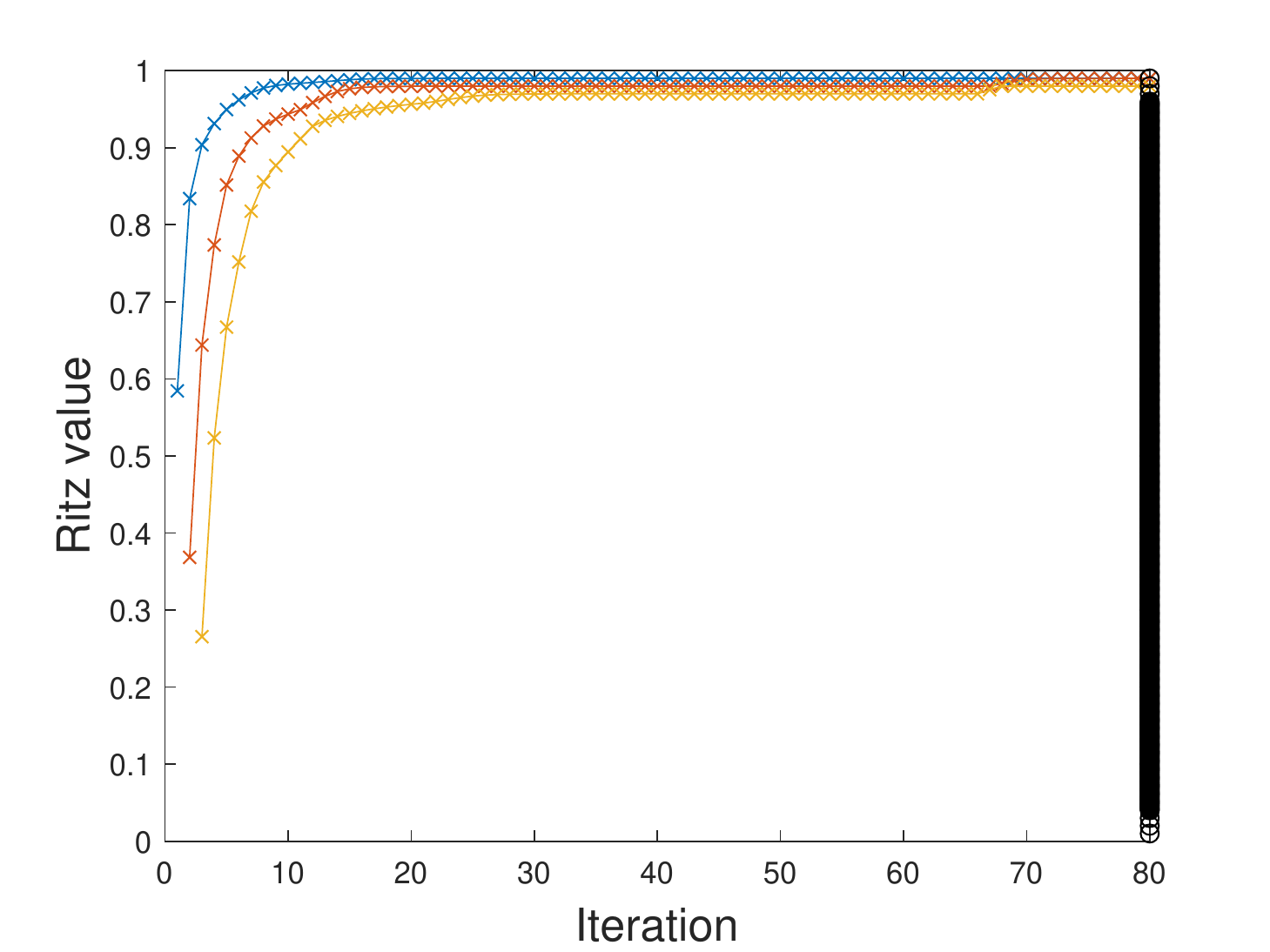}}	\subfloat[rJBD]{\label{fig:4b}\includegraphics[width=0.4\textwidth]{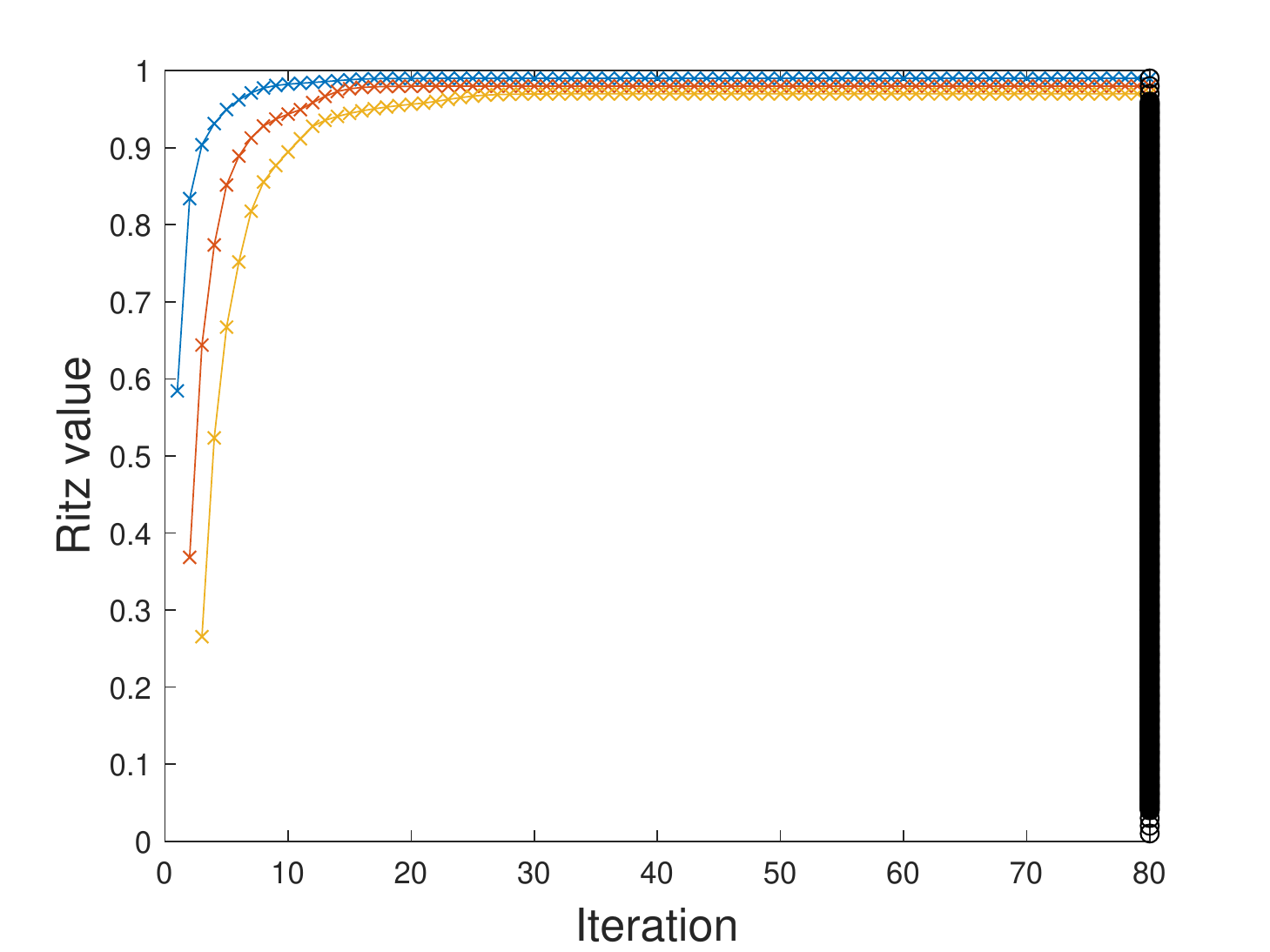}}
	\quad  % 
	\subfloat[JBD]{\label{fig:4c}\includegraphics[width=0.4\textwidth]{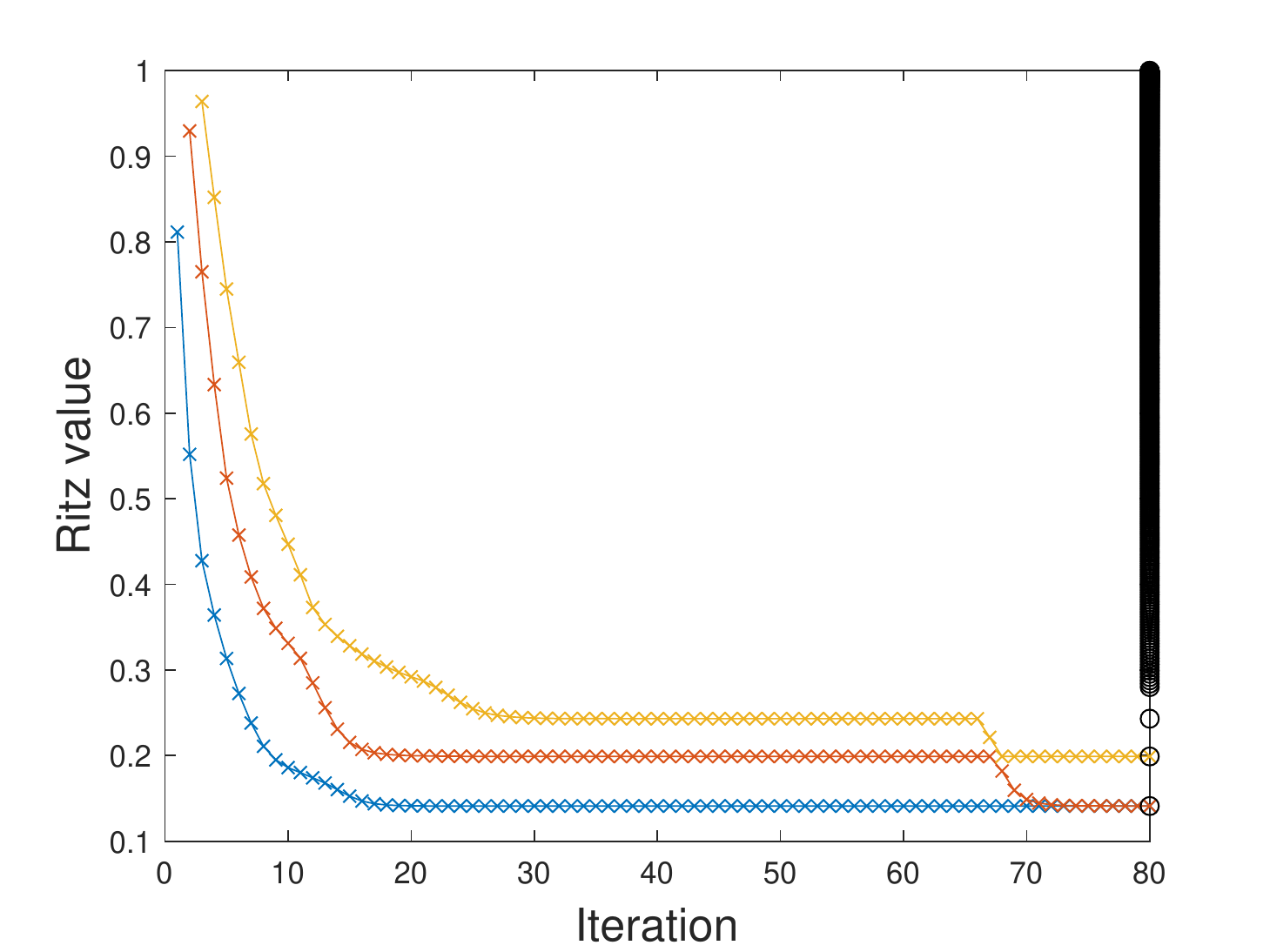}}	\subfloat[rJBD]{\label{fig:4d}\includegraphics[width=0.4\textwidth]{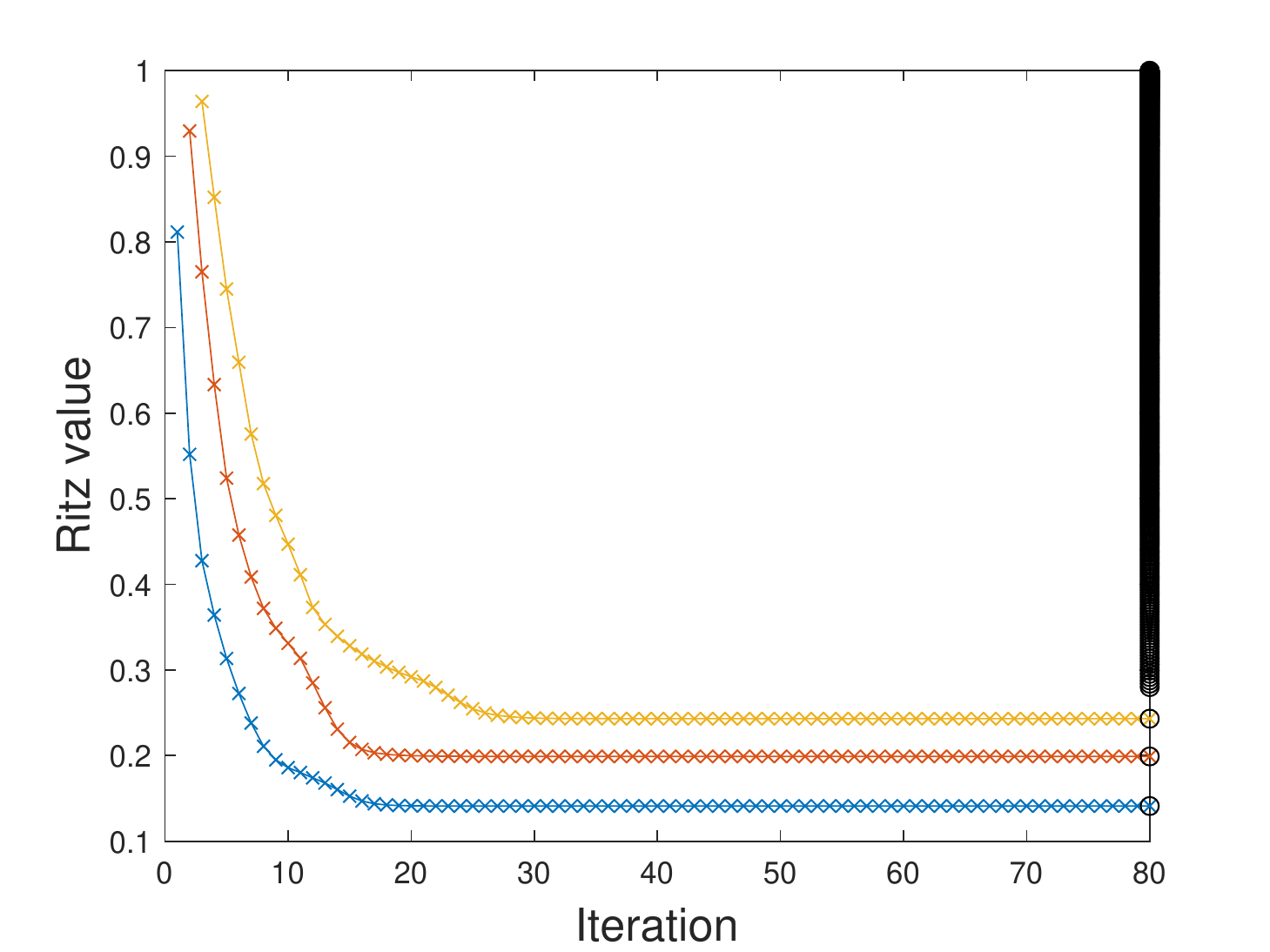}}
	\caption{Convergence of Ritz values. The top two use the SVD of $B_{k}$ to approximate $c_1, c_2$ and $c_3$; the bottom two use the SVD of $\widehat{B}_{k}$ to approximate $s_1, s_2$ and $s_3$. $\tau=10^{-10}$.}
	\label{fig4}
\end{figure}

\begin{figure}[htbp]
	\centering
	\subfloat[$\tau_1=10^{-10}$, $\tau_2=10^{-12}$]{\label{fig:5a}\includegraphics[width=0.41\textwidth]{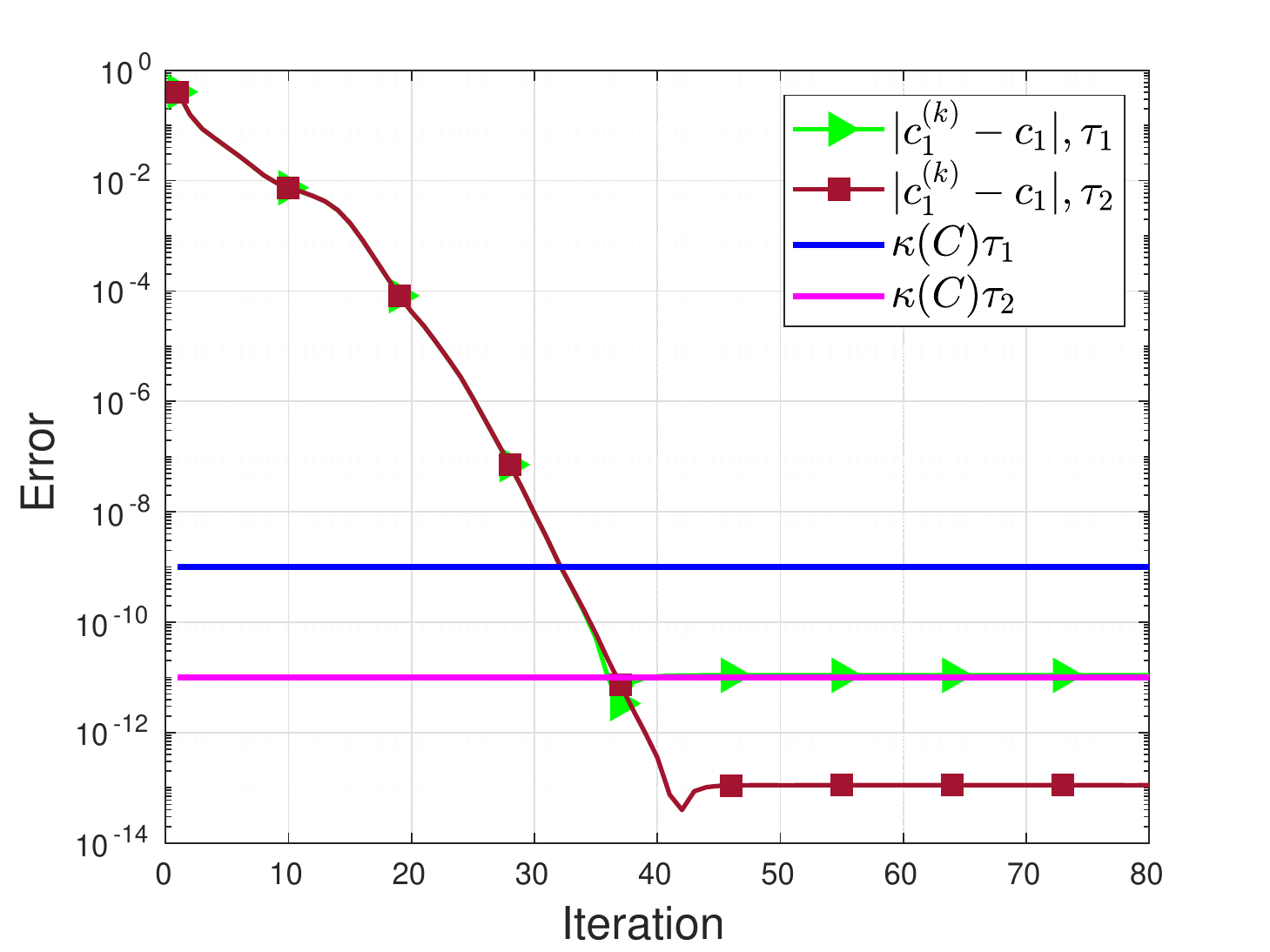}}
	\subfloat[$\tau_1=10^{-10}$, $\tau_2=10^{-12}$]{\label{fig:5b}\includegraphics[width=0.41\textwidth]{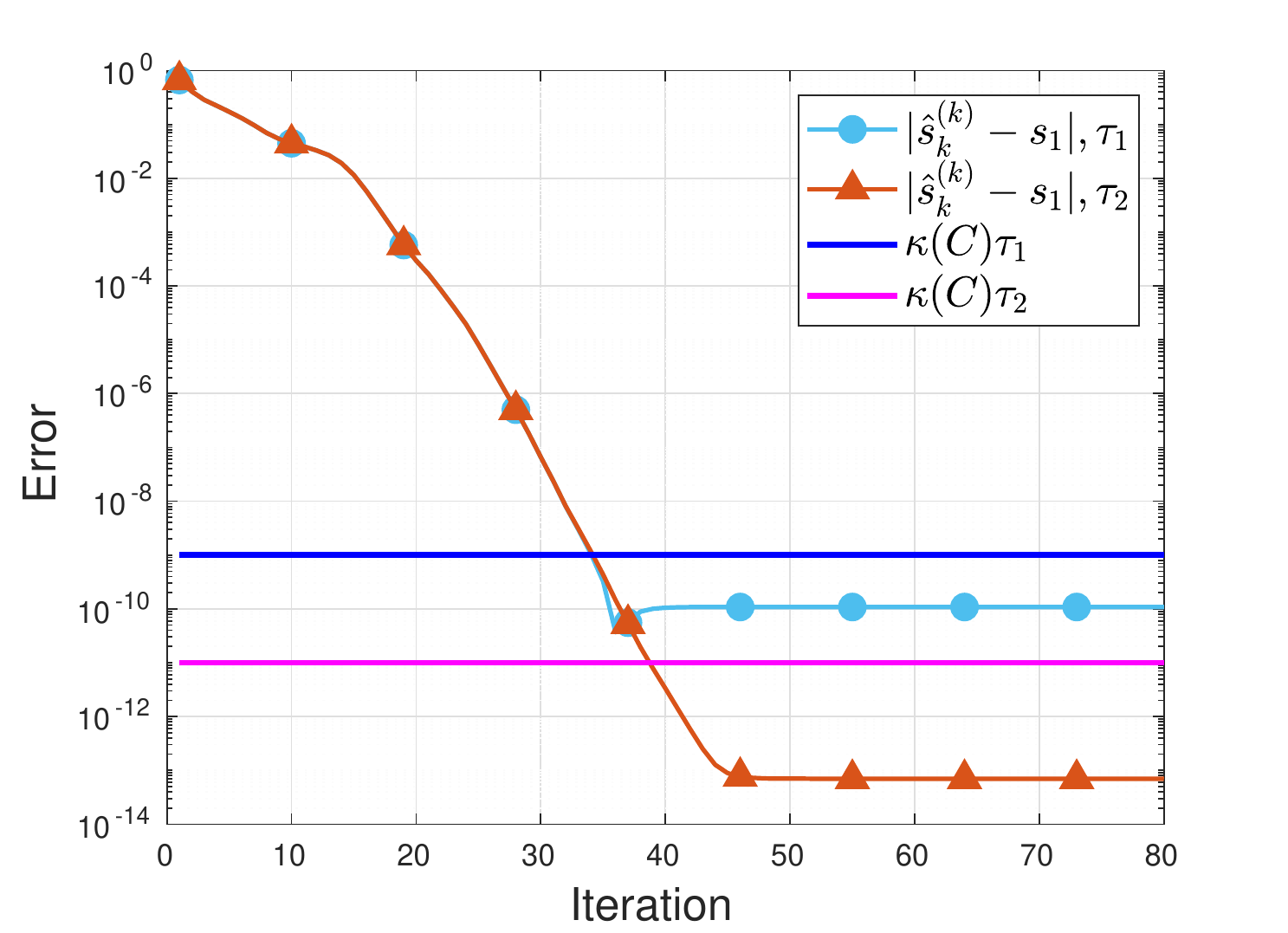}}
	\caption{The approximate accuracy of $c_{1}^{(k)}$ by the SVD of $B_k$ and $\hat{s}_{k}^{(k)}$ by the SVD of $\widehat{B}_k$.}
	\label{fig5}
\end{figure}

\Cref{fig4} depicts the convergence of Ritz values, which are the first three largest singular values of $B_k$ or the first three smallest singular values of $\widehat{B}_k$. Both of them are used to compute the first three largest generalized singular values of $\{A_2, B_2\}$ by approximating $c_1, c_2, c_3$ or $s_1, s_2, s_3$. The right vertical line indicates the values of $c_{i}$ or $s_i$ for $i=1,\dots,n$, and the left and right panels exhibit the convergence behaviors of JBD and rJBD for partial GSVD computations, respectively. For JBD we observe from \cref{fig:4a} that the second largest Ritz value suddenly jumps up at some iteration to continue converging to $c_1$, which is the so called ``ghost phenomenon" due to the loss of orthogonality of Lanczos vectors. This phenomenon can be observed more clearly in \cref{fig:4c}, which depicts the same convergence behavior as \cref{fig:4a}, since they approximate the same generalized singular values. For the rJBD method, the convergence of Ritz values becomes regular, which is in accordance with that in exact arithmetic that a simple generalized singular value is approximated by Ritz values without ghosts. This property can be explained by \Cref{thm4.1}. The convergence behaviors of approximations to the smallest generalized singular values are similar and we do not show them any more.

\Cref{fig5} shows the final accuracy of $c_{1}^{(k)}$ for approximating $c_1$ and of $\hat{s}_{k}^{(k)}$ for approximating $s_1$, where the stopping tolerance for inner iterations are $10^{-10}$ and $10^{-12}$. By \Cref{thm4.1}, $c_{1}^{(k)}$ will converge to $c_1$ with absolute error $|c_{1}^{(k)}-c_1|$ of order $\mathcal{O}(\kappa(C)\tau)$, which can be clearly observed in \cref{fig:5a}. \Cref{fig:5b} shows the convergence and absolute error $|\hat{s}_{k}^{(k)}-s_1|$ of $\hat{s}_{k}^{(k)}$ by the SVD of $\widehat{B}_k$, which is very similar to that of $c_{1}^{(k)}$ but with a slight difference, due to the relation \cref{3.18}.

\begin{figure}[htp]
	\centering
	\subfloat[$\tau_1=10^{-10}$]
	{\label{fig:6a}\includegraphics[width=0.41\textwidth]{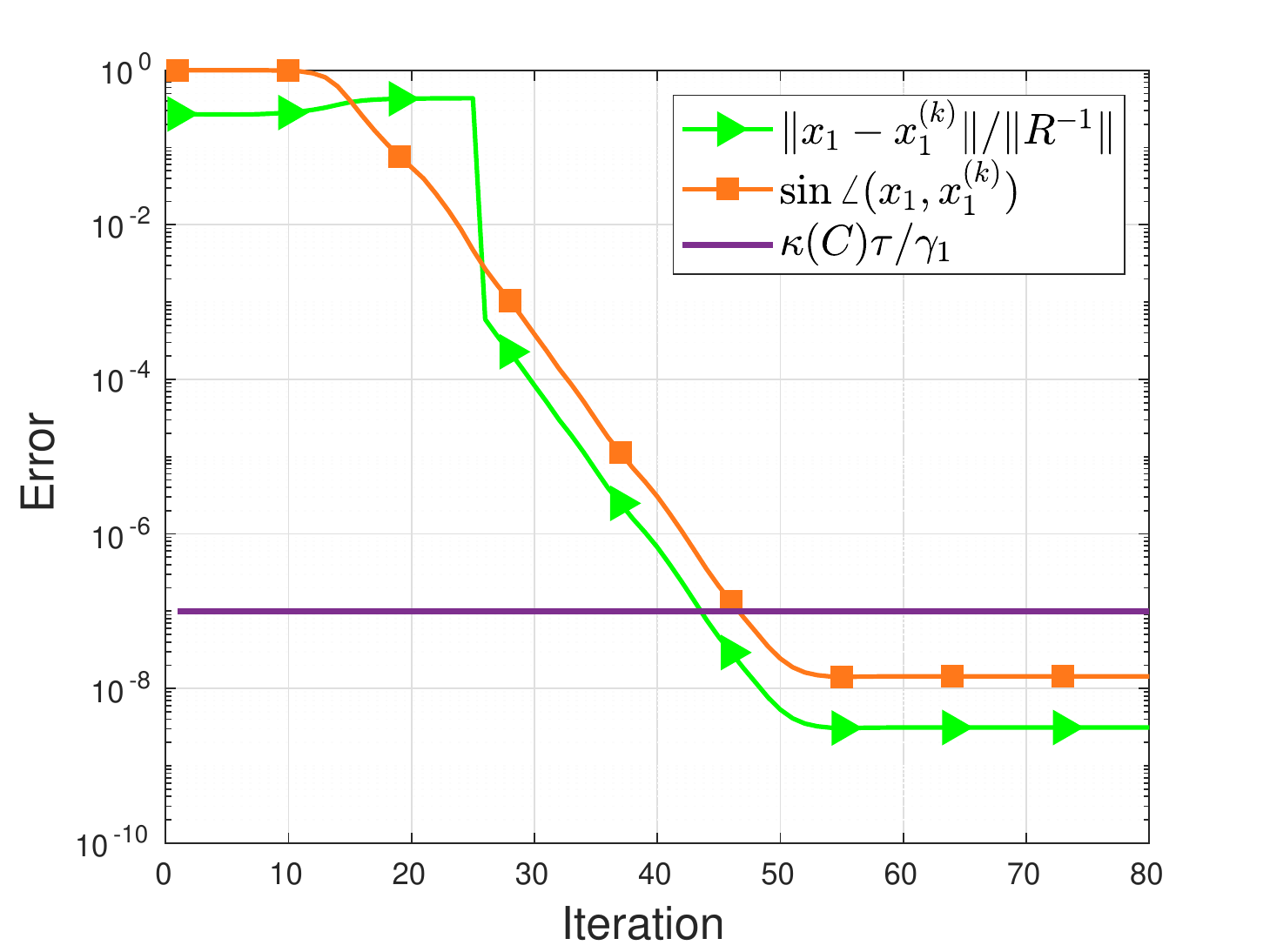}}	
	\subfloat[$\tau_1=10^{-10}$]
	{\label{fig:6b}\includegraphics[width=0.42\textwidth]{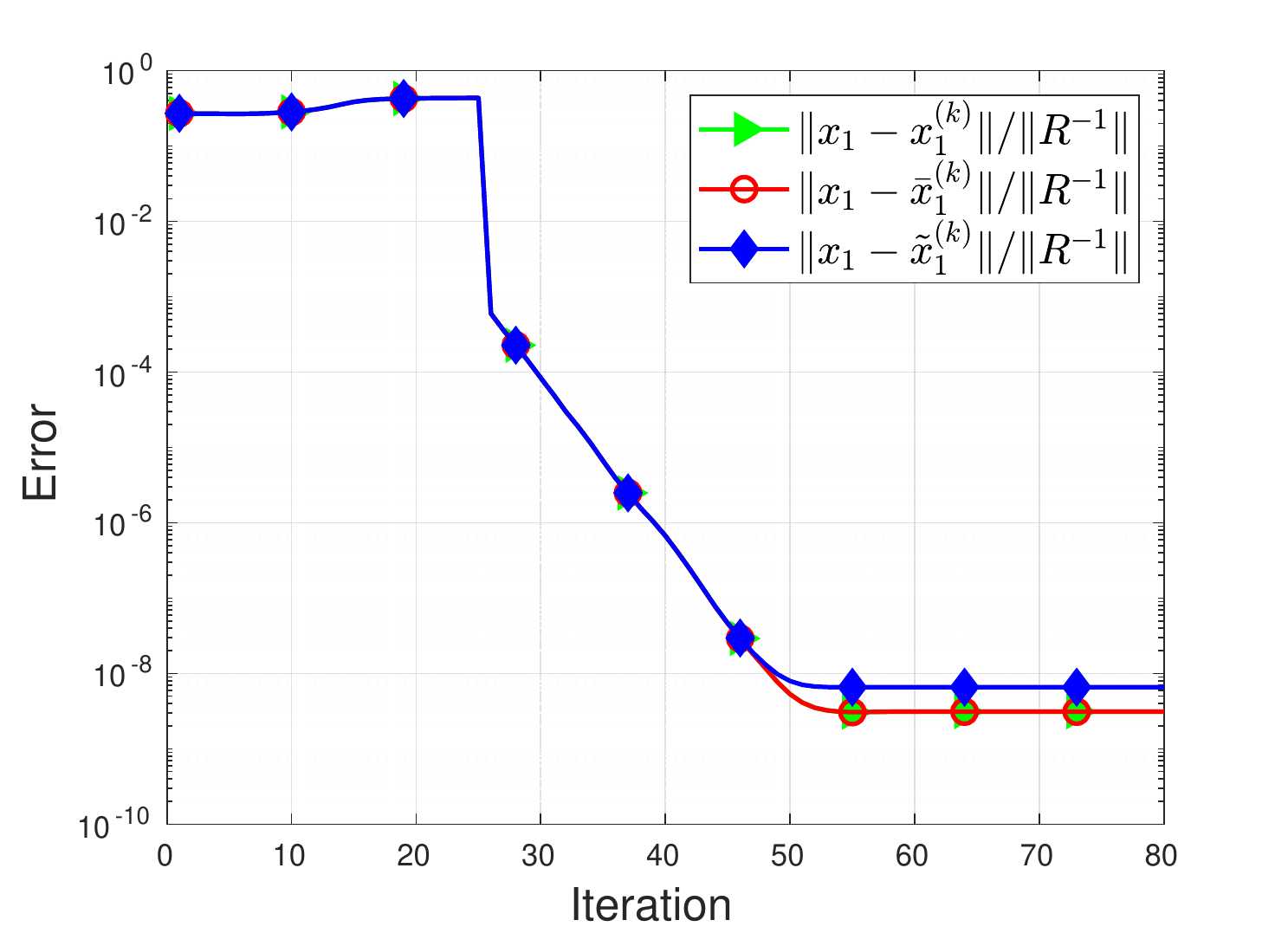}}
	\quad
	\subfloat[$\tau_1=10^{-12}$]
	{\label{fig:6c}\includegraphics[width=0.41\textwidth]{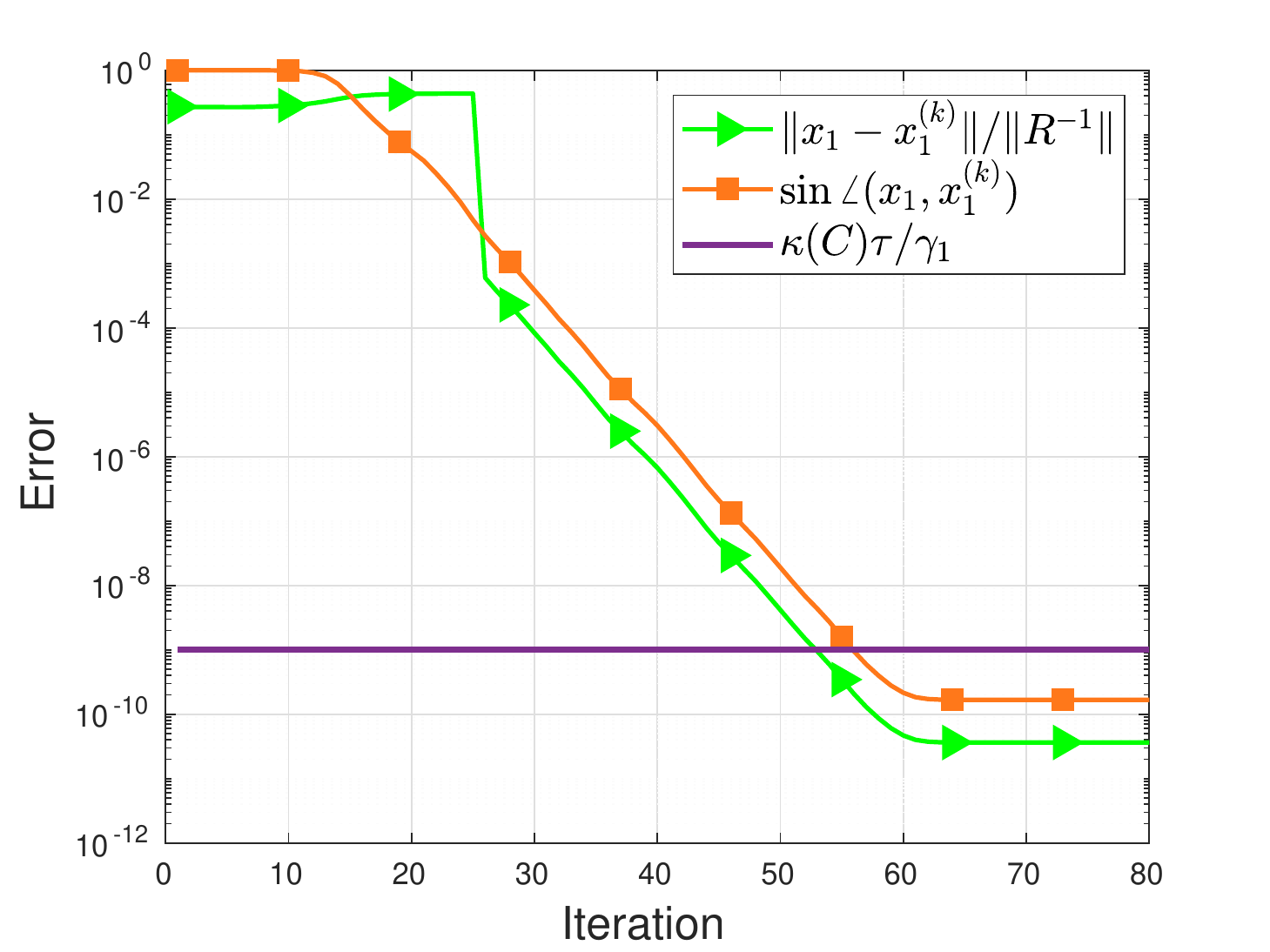}}	
	\subfloat[$\tau_1=10^{-12}$]
	{\label{fig:6d}\includegraphics[width=0.43\textwidth]{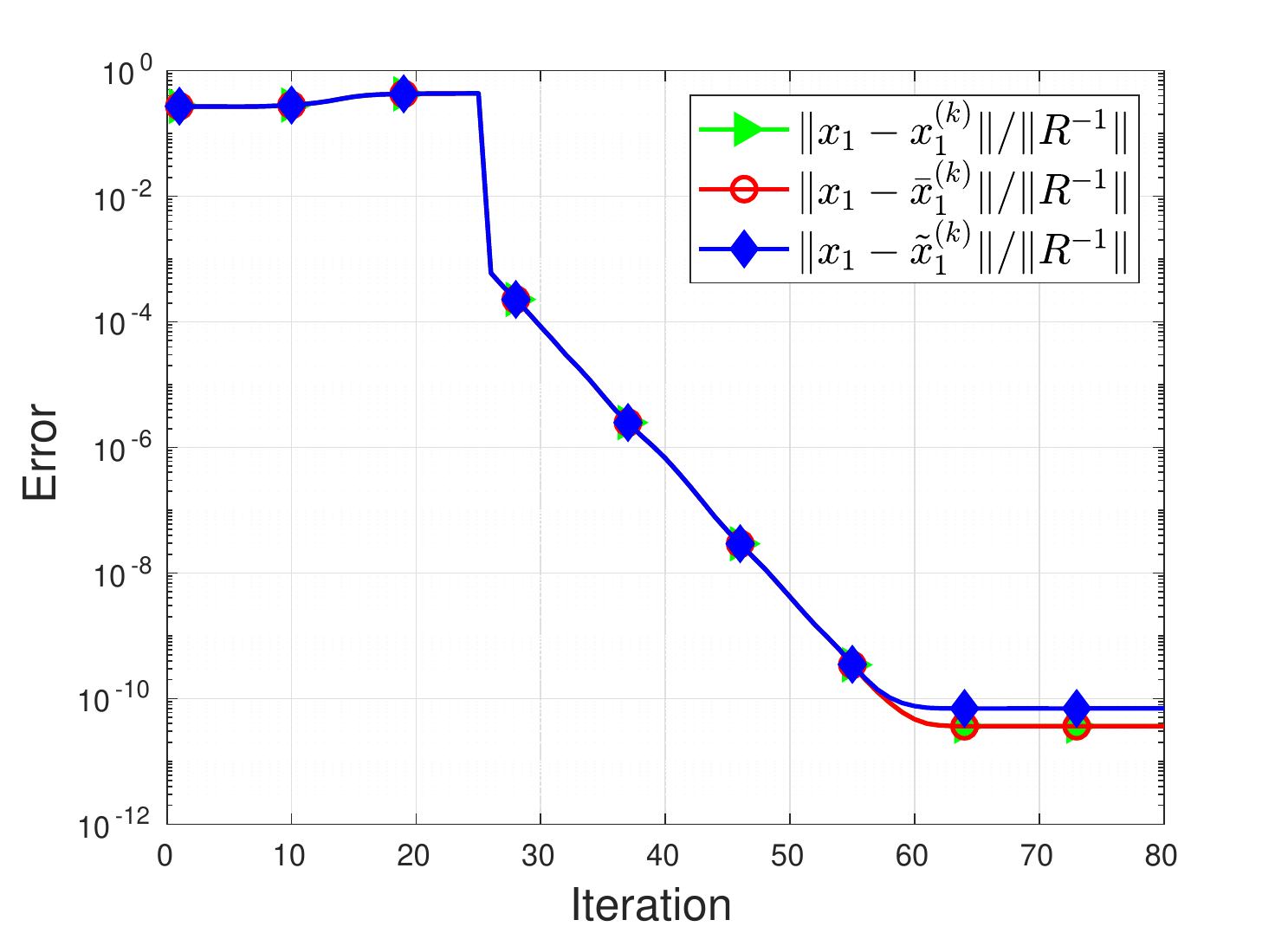}}
	\caption{The accuracy of approximations to $x_1$, where $\bar{x}_{1}^{(k)}$ and $\tilde{x}_{1}^{(k)}$ are computed from \cref{ls_x} by LSQR with stopping tolerance $\bar{\tau}_1=\tau$ and $\bar{\tau}_2=100\tau$.}
	\label{fig6}
\end{figure}

Finally we show the convergence and final accuracy of approximate generalized singular vectors. The stopping tolerance for inner iterations are $10^{-10}$ and $10^{-12}$, and the approximations to the right generalized singular vector $x_1$ corresponding to $c_1/s_1$ are obtained by the SVD of $B_k$, where $x_{1}^{(k)}$ is computed by explicitly using QR factorization of $C$ to solve \cref{ls_x} while $\bar{x}_{1}^{(k)}$ and $\tilde{x}_{1}^{(k)}$ are computed by solving \cref{ls_x} using LSQR with stopping tolerances $\bar{\tau}_1=\tau$ and $\bar{\tau}_2=100\tau$. The approximation errors are also measured using
\[\sin\angle(x_{1}, x_{1}^{(k)}), \ \  \sin\angle(x_{1}, \bar{x}_{1}^{(k)}), \ \
\sin\angle(x_{1}, \tilde{x}_{1}^{(k)}).\]
Since $\gamma_1\gg k\kappa(C)\tau$ for $k=1,\dots,80$, in this case \cref{rsv_accur} becomes $\|x_1-x_{1}^{(k)}\| /\|R^{-1}\| = \mathcal{O}(\sqrt{n}k\kappa(C)\tau/\gamma_1)$, we use $\kappa(C)\tau/\gamma_1$ as an upper bound on final accuracy of approximate vectors.

From \Cref{fig6} we can find that $x_{1}^{(k)}$ can eventually approximate $x_1$ with relative errors bounded by $\kappa(C)\tau/\gamma_1$, and the convergence rate is not affected too much by different values of $\tau$. The computed $\bar{x}_{1}^{(k)}$ with $\bar{\tau}_1=\tau$ has the same accuracy as $x_{1}^{(k)}$, while the accuracy of $\tilde{x}_{1}^{(k)}$ computed with $\bar{\tau}_1=100\tau$ is slightly worse. Although we do not show it here, using $\bar{\tau}=10\tau$ to solve \cref{ls_x} can also get a vector with the same accuracy as $x_{1}^{(k)}$. All values of $\bar{\tau}\in[0.1\tau, 10\tau]$ are feasible for computing $x_1$ .

%%-------------------------------------------------------------
\section{Conclusion and outlook}\label{sec7}
For the joint bidiagonalization of a matrix pair $\{A,L\}$, we have studied the influence of inaccuracy of inner iterations on the behavior of the algorithm. For a commonly used stopping criterion with tolerance $\tau$ to describe solution accuracy of inner least squares problems, we have shown that the orthogonality of Lanczos vectors will be lost where the loss rate depends on $\tau$ and the condition number of $C=(A^T,L^T)^T$. A reorthogonalized JBD process called rJBD is proposed to keep orthogonality of $\widetilde{V}_k$, and an error analysis has been carried out to build up connections between the rJBD process and Lanczos bidiagonalizations of $Q_A$ and $Q_L$, where a backward error bound about the bidiagonal reduction of $Q_A$ is established. The results of error analysis are used to investigate the convergence and accuracy of the computed GSVD components of $\{A,L\}$ by rJBD, which shows that the approximate generalized singular values can only reach an accuracy of order $\mathcal{O}(\kappa(C)\tau)$ and the accuracy of approximate right generalized singular vectors depends not only on the value of $\kappa(C)\tau$ but also on the gap between generalized singular values, while the convergence rate is not affected very much. Some numerical experiments are made to confirm the theoretical results.

For practical JBD based GSVD computations, our results can provide a guideline for choosing a proper computing accuracy of inner iterations in order to obtain approximate GSVD components with a desired accuracy. Besides, there are still some issues need to be considered to make the JBD method becoming a practical GSVD algorithm. For example, an efficient procedure is needed to extract information from $\mbox{span}(U_k)$ and $\mbox{span}(\widehat{U}_k)$ generated by rJBD to compute left generalized singular vectors. Another issue is how to accelerate convergence of the inner iteration. A proper preconditioner may be very useful for iteratively solving inner least squares problems. Numerical experiments show that an appropriate scaling factor transforming $\{A,L\}$ to $\{A,\gamma L\}$ has a positive effect on both the number of iterations needed by LSQR for inner iterations and the number of outer Lanczos iterations, thus scaling strategies are worth to be investigated. These issues constitute the subject of future research.

%%-----------------------
\section*{Acknowledgments}
I thank Dr. Long Wang, Prof. Guangming Tan and Prof. Weile Jia for their consistent support during this research. I am grateful to the anonymous referees and Editor Prof. Arvind Saibaba for their detailed reading of the manuscript and providing insightful comments that helped to improve the paper.

\bibliographystyle{siamplain}
\bibliography{references}

\end{document}